\theoremstyle:=definition,remark,plain\do{%
 \expandafter\g@addto@macro\csname th@\theoremstyle\endcsname{%
 \addtolength\thm@preskip\parskip
 }%
 }
\declaretheorem[name=Theorem,numberwithin=section]{thm}
\declaretheorem[name=Proposition,numberlike=thm]{prop}
\declaretheorem[name=Lemma,numberlike=thm]{lemma}
\declaretheorem[name=Definition,style=definition,qed=$\blacktriangle$,numberlike=thm]{defn}
\declaretheorem[name=Remark,style=definition,qed=$\blacktriangle$,numberlike=thm]{rmk}
\newcounter{commentCounter}
\newcommand{\sta}{\star}
\newcommand{\cG}{\mathcal G}
\newcommand{\dive}{\operatorname{div}}
\newcommand{\pa}[2]{\frac{\partial #1}{\partial #2}}
\newcommand{\paop}[1]{\frac{\partial}{\partial #1}}
\newcommand{\cV}{\mathcal{V}}
\newcommand{\cE}{\mathcal{E}}
\newcommand{\Vol}{\mathsf{Vol}}
\newcommand{\vol}{\mathsf{vol}}
\newcommand{\real}{\mathrm{Re}}
\newcommand{\G}{\mathrm{G}_2}
\newcommand{\Spin}[1]{\mathrm{Spin}(#1)}
\newcommand{\U}[1]{\mathrm{U}(#1)}
\newcommand{\SU}[1]{\mathrm{SU}(#1)}
\newcommand{\ph}{\varphi}
\newcommand{\ps}{\psi}
\newcommand{\Ph}{\Phi}
\newcommand{\hk}{\mathbin{\hbox{\vrule height0.3pt width5pt depth 0.2pt \vrule height5pt width0.4pt depth 0.2pt}}}
\newcommand{\tr}{\operatorname{Tr}}
\newcommand{\ddx}[1]{\frac{\del}{\del x^{#1}}}
\newcommand{\dx}[1]{d x^{#1}}
\newcommand{\del}{\partial}
\newcommand{\delbar}{\overline{\partial}}
\newcommand{\ddt}{\frac{d}{d t}}
\newcommand{\eps}{\varepsilon}
\newcommand{\dist}{\operatorname{dist}}
\newcommand{\rest}[2]{ {\left. {#1} \right|}_{{#2} }}
\newcommand{\wt}{\widetilde}
\newcommand{\wh}{\widehat}
\newcommand{\ol}{\overline}
\newcommand{\lf}{\ell(\ol{\nabla} F)}
\newcommand{\w}{\wedge}
\begin{document}

\title{A variational characterization of calibrated submanifolds}

\author{Da Rong Cheng \\ \textit{Department of Pure Mathematics, University of Waterloo} \\ \tt{drcheng@uwaterloo.ca} \and Spiro Karigiannis \\ \textit{Department of Pure Mathematics, University of Waterloo} \\ \tt{karigiannis@uwaterloo.ca} \and Jesse Madnick \\ \textit{National Center for Theoretical Sciences, National Taiwan University} \\ \tt{jmadnick@ncts.ntu.edu.tw}}

\maketitle

\begin{abstract}
Let $M$ be a fixed compact oriented embedded submanifold of a manifold $\ol{M}$. Consider the volume $\cV (\ol{g}) = \int_M \vol_{(M, g)}$ as a functional of the ambient metric $\ol{g}$ on $\ol{M}$, where $g = \rest{\ol{g}}{M}$. We show that $\ol{g}$ is a critical point of $\cV$ with respect to a special class of variations of $\ol{g}$, obtained by varying a calibration $\mu$ on $\ol{M}$ in a particular way, if and only if $M$ is calibrated by $\mu$. We do not assume that the calibration is closed. We prove this for almost complex, associative, coassociative, and Cayley calibrations, generalizing earlier work of Arezzo--Sun in the almost K\"ahler case. The Cayley case turns out to be particularly interesting, as it behaves quite differently from the others. We also apply these results to obtain a variational characterization of Smith maps.
\end{abstract}

\tableofcontents

\section{Introduction} \label{sec:intro}

Let $(\ol{M}, \ol{g})$ be an $n$-dimensional Riemannian manifold, and let $M$ be a $k$-dimensional compact oriented manifold. Given an immersion $\iota \colon M \to \ol{M}$, there is an induced metric $g = \iota^* \ol{g}$ on $M$, and from $g$ and the orientation on $M$ there is a volume form $\vol_{(M, g)}$. Thus we can compute the volume $\int_M \vol_{(M, g)}$ of $M$ with respect to $g$. This is a functional on the space of immersions $\iota \colon M \to \ol{M}$, and it is well-known that the critical points are the \emph{minimal} immersions --- that is, immersions with vanishing mean curvature.

In this paper we consider the following related but different situation. We \emph{fix} the map $\iota \colon M \to \ol{M}$, which for technical reasons we take to be an embedding, so that $M \subset \ol{M}$ and $\iota$ is the inclusion. Instead, what we vary is the \emph{ambient metric} $\ol{g}$ on $\ol{M}$. Thus, the map $\ol{g} \mapsto \int_M \vol_{(M, g)}$ where $g = \rest{\ol{g}}{M}$ is a functional on the space of metrics $\ol{g}$ on $\ol{M}$. For general variations of $\ol{g}$, this functional has no critical points.

Suppose $(\ol{M}, \ol{g})$ is equipped with an additional geometric structure, namely a \emph{calibration} $k$-form $\mu$. (Here by calibration we just mean a $k$-form with comass one. It need not be parallel or even closed.) The submanifold $M$ is \emph{calibrated} by $\mu$ if $\rest{\mu}{M} = \vol_{(M, g)}$. If $d \mu = 0$, then this implies that $M$ is homologically volume-minimizing, and hence minimal. In general if $d \mu \neq 0$, the calibrated submanifolds are just a distinguished class of $k$-dimensional oriented submanifolds determined by the $k$-form $\mu$.

The examples we consider are the calibrations associated to $\U{m}$, $\G$, or $\Spin{7}$-structures. In each of these cases we can consider a particular \emph{subclass} $\cG$ of variations of the ambient metric $\ol{g}$ on $\ol{M}$ that are induced by varying the calibration forms in a particular way. The precise nature of the metric variations in this subclass $\cG$ is slightly different in each of the cases we consider, and is explained in detail in each of the relevant subsections of \S\ref{sec:calibrated}. We prove the following:

\textbf{Meta-Theorem.} The submanifold $M$ is \emph{calibrated} by $\mu$ on $(\ol{M}, \ol{g})$ if and only if $\ol{g}$ is \emph{critical} for the functional $\ol{g} \mapsto \int_M \vol_{(M, g)}$ with respect to variations of $\ol{g}$ in the subclass $\cG$ determined by $\mu$.

This gives a \emph{variational characterization} for these types of calibrated submanifolds. Precise statements are given in Theorems~\ref{thm:Um-A},~\ref{thm:Um-B},~\ref{thm:assoc-A},~\ref{thm:assoc-B},~\ref{thm:coassoc-A},~\ref{thm:coassoc-B},~\ref{thm:Cayley-A}, and~\ref{thm:Cayley-B}.

In the case when $\ol{M}$ is \emph{almost K\"ahler} (that is, when $\ol{M}$ is equipped with a $\U{m}$-structure with \emph{closed} K\"ahler form $\omega$), this result was originally proved when $k=2$ by Arezzo--Sun in~\cite[Theorem 1.3, Proposition 2.3]{AZ1}. For $k > 2$, one direction (that critical implies calibrated) was proved by Arezzo--Sun in~\cite[Theorem 1.2]{AZ2}. This latter result was in the more general setting of currents and geometric measure theory, but still assumed the closedness of the K\"ahler form $\omega$.

\begin{rmk} \label{rmk:similar-work}
Similar results were obtained by Sun~\cite{S} and Tan~\cite{T} for a different functional, namely the integral of the $p^{\text{th}}$ power of the calibration angle. Sun first considered a target that was a K\"ahler manifold, which Tan later generalized to an almost Hermitian manifold.
\end{rmk}

We generalize the work of Arezzo--Sun in two ways. First, in the $\U{m}$ case we remove the restriction that $d \omega = 0$, and also prove the converse when $k > 2$. Second, we prove similar results for the associative, coassociative, and Cayley calibrations for $\G$ and $\Spin{7}$-structures.

There is one technical caveat that is somewhat mysterious. The special class $\cG$ of ambient metric variations in the $\Spin{7}$ case \emph{depends} on the submanifold $M$. A similar phenomenon occurs when $k > 2$ in the $\U{m}$ case, but by imposing $d \omega = 0$ we show that we get a class $\cG$ of metric variations that is independent of $M$. By contrast, we show that imposing closedness of the calibration form in the $\Spin{7}$ case \emph{cannot} remove the dependence on $M$ of the class $\cG$. (See Remarks~\ref{rmk:Um-A-b} and~\ref{rmk:Cayley-cannot-fix}.)

In Section~\ref{sec:calibrated}, we first discuss some preliminaries common to all four cases, and then treat each of the four cases in detail. In Section~\ref{sec:further} we make some further observations, including an alternative argument for the ``only if'' part of the Meta-Theorem, the relation to a similar variational characterization of minimal submanifolds, an application to Smith maps, and future questions.

\textbf{Acknowledgements.} The authors are grateful to the anonymous referee for alerting our attention to the references~\cite{S} and~\cite{T}.

\section{A variational characterization of calibrated submanifolds} \label{sec:calibrated}

In this section we establish a variational characterization of calibrated submanifolds for $\U{m}$, $\G$, and $\Spin{7}$-structures. We begin with some preliminaries, and then treat each case.

\subsection{Preliminaries} \label{sec:prelim}

Let $(\ol{M}, \ol{g})$ be an $n$-dimensional Riemannian manifold, and let $M$ be a compact oriented embedded $k$-dimensional submanifold of $M$. The metric $\ol{g}$ on $\ol{M}$ induces a Riemannian metric $g = \rest{\ol{g}}{M}$ on $M$ by restriction, and hence a Riemannian volume form $\vol_{(M,g)}$ from $g$ and the orientation on $M$. The volume of $M$ is thus $\int_M \vol_{(M,g)}$.

Whenever we use a local orthonormal frame $\{ e_1, \ldots, e_n \}$ of $(\ol{M}, \ol{g})$, it is always chosen to be adapted to the submanifold $M$, meaning that at any points of $M$, the first $k$ elements form a local orthonormal frame $\{ e_1, \ldots, e_k \}$ of $TM$. (Except in the coassociative case it is more convenient to list the normal vectors before the tangent vectors.) We use the Einstein summation convention throughout. For a general vector field $X$ along $M$ we use $X^T$ and $X^{\perp}$ to denote the tangent and normal parts, respectively. We also use $\ol{g}$ to identify vector fields and $1$-forms on $\ol{M}$ in the usual way, and we sometimes use $\langle \cdot, \cdot \rangle$ to denote the metric $\ol{g}$. Finally, we write $\ol{\nabla}$ for the Levi-Civita connection on $\ol{M}$ with respect to $\ol{g}$, and $\nabla$ for the Levi-Civita connection on $M$ with respect to $g = \rest{\ol{g}}{M}$.

We have a functional $\cV$ on the space of Riemannian metrics on $\ol{M}$ given by
\begin{equation} \label{eq:functional}
\ol{g} \mapsto \cV(\ol{g}) = \int_M \vol_{(M,g)} \qquad \text{ where $g = \rest{\ol{g}}{M}$}.
\end{equation}
The following is a standard computation that gives the first variation formula for $\cV$.
\begin{lemma} \label{lemma:first-variation}
Let $\ol{g}_t$ be a 1-parameter family of metrics on $\ol{M}$ with $\ddt \ol{g}_t = h_t$. Let $g_t = \rest{\ol{g}_t}{M}$. Then we have
\begin{equation} \label{eq:first-variation}
\ddt \cV (\ol{g}_t) = \frac{1}{2} \int_M (\tr_{g_t} h_t) \, \vol_{(M, g_t)}.
\end{equation}
\end{lemma}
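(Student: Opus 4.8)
The plan is to reduce the statement to the pointwise variation of the Riemannian volume form and then integrate. Since $M$ is compact and the family $\ol{g}_t$ (hence $g_t$ and $\vol_{(M,g_t)}$) depends smoothly on $t$, I may differentiate under the integral sign, so that $\ddt \cV(\ol{g}_t) = \int_M \ddt \vol_{(M,g_t)}$. The whole problem thus comes down to computing $\ddt \vol_{(M,g_t)}$ pointwise.

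First I would record the elementary but crucial observation that restriction to $M$ commutes with $\ddt$: differentiating $g_t = \rest{\ol{g}_t}{M}$ in $t$ gives $\ddt g_t = \rest{h_t}{M}$, the restriction of the ambient symmetric $2$-tensor $h_t$ to pairs of vectors tangent to $M$. Consequently the variation of $g_t$ is governed entirely by the tangential part of $h_t$, and the $g_t$-trace of $\rest{h_t}{M}$ over an orthonormal frame $\{e_1,\dots,e_k\}$ of $TM$ is precisely $\tr_{g_t} h_t = \sum_{i=1}^k h_t(e_i,e_i)$ in the adapted-frame convention of \S\ref{sec:prelim}.

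Next I would carry out the pointwise computation in local coordinates $x^1,\dots,x^k$ on $M$, writing $\vol_{(M,g_t)} = \sqrt{\det((g_t)_{ij})}\,dx^1 \wedge \cdots \wedge dx^k$ where $(g_t)_{ij} = g_t(\del_i,\del_j)$. Applying Jacobi's formula $\ddt \det A_t = (\det A_t)\,\tr(A_t^{-1}\dot{A}_t)$ to $A_t = ((g_t)_{ij})$, together with $\ddt (g_t)_{ij} = (h_t)_{ij}$ from the previous step, yields $\ddt \sqrt{\det((g_t)_{ij})} = \tfrac{1}{2} \sqrt{\det((g_t)_{ij})}\,(g_t)^{ij}(h_t)_{ij} = \tfrac{1}{2} (\tr_{g_t} h_t)\sqrt{\det((g_t)_{ij})}$. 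Since the coordinate form $dx^1 \wedge \cdots \wedge dx^k$ is $t$-independent, this gives the pointwise identity $\ddt \vol_{(M,g_t)} = \tfrac{1}{2} (\tr_{g_t} h_t)\,\vol_{(M,g_t)}$, which integrates to \eqref{eq:first-variation}.

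There is no real obstacle here --- this is exactly the ``standard computation'' flagged before the statement. The only points requiring a little care are the interchange of differentiation and integration (justified by compactness of $M$) and the coordinate-free interpretation of $\tr_{g_t} h_t$: one must check that the determinant derivative, computed with the coordinate matrix $(g_t)_{ij}$, reproduces the invariantly defined $g_t$-trace of the tangential part of $h_t$, which is precisely the content of the second step.
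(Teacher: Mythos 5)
Your proposal is correct and follows essentially the same route as the paper's proof: differentiate under the integral, apply Jacobi's formula to the local matrix of $g_t$, use $\ddt (g_t)_{ij} = (h_t)_{ij}$, and identify $(g_t)^{ij}(h_t)_{ij}$ with $\tr_{g_t} h_t$. The only cosmetic difference is that you work with coordinate vector fields $\partial_i$ while the paper uses an arbitrary $t$-independent local frame $\{e_1,\dots,e_k\}$ for $TM$; the computation is identical.
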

\begin{proof}
Let $\{ e_1, \ldots, e_k \}$ be any local frame for $TM$, independent of $t$. This determines the local matrix of functions
\begin{equation} \label{eq:first-v-temp}
(g_t)_{ab} = g_t (e_a, e_b) = \ol{g}_t (e_a, e_b),
\end{equation}
with respect to which we have $\vol_{(M, g_t)} = \sqrt{ \det (g_t) } \, e^1 \w \cdots \w e^k$. From the usual formula for the derivative of the determinant, we get
\begin{align*}
\ddt \sqrt{ \det g_t } & = \frac{1}{2 \sqrt{ \det g_t }} \ddt \big( \det g_t \big) = \frac{1}{2 \sqrt{ \det g_t }} (g_t)^{ab} \Big( \ddt (g_t)_{ab} \Big) \big( \det g_t \big) \\
& = \frac{1}{2} \sqrt{ \det g_t } \, (g_t)^{ab} \Big( \ddt (g_t)_{ab} \Big).
\end{align*}
From~\eqref{eq:first-v-temp} we get $\ddt (g_t)_{ab} = \ddt \ol{g}_t (e_a, e_b) = h_t (e_a, e_b) = (h_t)_{ab}$. Thus we find that
$$ \ddt \vol_{(M, g_t)} = \frac{1}{2} (g_t)^{ab} (h_t)_{ab} \, \vol_{(M, g_t)} = \frac{1}{2} (\tr_{g_t} h_t) \, \vol_{(M, g_t)}.$$
Since $\ddt \cV (\ol{g}_t) = \int_M \ddt \vol_{(M, g_t)}$, equation~\eqref{eq:first-variation} follows.
\end{proof}

It is clear that if we allow all possible variations $\ol{g}_t$ of the ambient metric $\ol{g}$ on $\ol{M}$, then the functional~\eqref{eq:functional} has no critical points. For example, we can consider scalings $\ol{g}_t = e^t \ol{g}$, which gives $h_t = \ol{g}_t$, and then~\eqref{eq:first-variation} yields $\ddt \cV (\ol{g}_t) = \frac{1}{2} k \, \Vol(M, g_t) > 0$. So in order to use the functional~\eqref{eq:functional} to give a variational characterization of certain submanifolds of $(\ol{M}, \ol{g})$, we must necessarily consider restricting the allowed variations of the ambient metric $\ol{g}$.

When the ambient manifold $\ol{M}$ is equipped with a certain $G$-structure, such as a $\U{m}$, $\G$, or $\Spin{7}$-structure, then the ambient metric $\ol{g}$ is induced by a special type of differential form $\mu$ on $\ol{M}$. (Actually in the $\U{m}$ case there is also an independent almost complex structure.) Thus we can consider a subclass $\cG$ of variations of the ambient metric $\ol{g}$ on $\ol{M}$ induced by varying the form $\mu$ in a particular way. We consider four specific cases, corresponding to the usual comass one differential forms for $\U{m}$, $\G$, and $\Spin{7}$-structures. That is, these are differential forms which are \emph{calibrations} in the sense of Harvey--Lawson when they are closed.

\begin{defn} \label{defn:structures}
The four cases we consider are the following (details can be found in~\cites{HL, LL}).
\begin{enumerate}[{$[$}1{$]$}]
\item $\U{m}$-structure (almost complex submanifolds): $(\ol{M}, \ol{g})$ is $2m$-dimensional, equipped with an almost complex structure $J$ that is orthogonal with respect to $\ol{g}$, and $\omega(X, Y) = \ol{g} (J X, Y)$ is the associated $(1,1)$-form. We do not assume that $J$ is integrable, nor that $\omega$ is closed. The classical Wirtinger inequality says that $\frac{1}{k!} \omega^k$ has comass one for all $1 \leq k \leq m-1$. A $2k$-dimensional orientable submanifold $M$ of $\ol{M}$ is \emph{almost complex}, also called \emph{$J$-holomorphic}, if its tangent spaces $T_x M$ are invariant under $J$. This is equivalent to the existence of an orientation on $M$ for which $M$ is calibrated by $\frac{1}{k!} \omega^k$.
\item $\G$-structure: $(\ol{M}, \ol{g})$ is $7$-dimensional, equipped with a $\G$-structure $\ph$ (a positive $3$-form) compatible with $\ol{g}$. The pair $(\ph, \ol{g})$ determines an orientation and Hodge dual $4$-form $\ps = \sta_{\ph} \ph$. We make no assumption on the torsion of the $\G$-structure. It is well-known that both $\ph$ and $\ps$ have comass one.
\begin{enumerate}[{$[$}a{$]$}]
\item (associative submanifolds): We have a vector cross product $\times$ given by $\ph(X, Y, Z) = \ol{g} (X \times Y, Z)$. A $3$-dimensional orientable submanifold $M$ of $\ol{M}$ is \emph{associative}, if its tangent spaces $T_x M$ are invariant under $\times$. This is equivalent to the existence of an orientation on $M$ for which $M$ is calibrated by $\ph$.
\item (coassociative submanifolds): A $4$-dimensional orientable submanifold $M$ of $\ol{M}$ is \emph{coassociative} if $\rest{\ph}{M} = 0$. This is equivalent to the existence of an orientation on $M$ for which $M$ is calibrated by $\ps$.
\end{enumerate}
\item $\Spin{7}$-structure (Cayley submanifolds): $(\ol{M}, \ol{g})$ is $8$-dimensional, equipped with a $\Spin{7}$-structure $\Ph$ (a positive $4$-form) compatible with $\ol{g}$. The pair $(\Ph, \ol{g})$ determines the vector cross product $P$ given by $\Ph(X, Y, Z, W) = \ol{g} (P(X, Y, Z), W)$. We make no assumption on the torsion of the $\Spin{7}$-structure. It is well-known that $\Ph$ has comass one. A $4$-dimensional orientable submanifold $M$ of $\ol{M}$ is \emph{Cayley} if its tangent spaces $T_x M$ are invariant under $P$. This is equivalent to the existence of an orientation on $M$ for which $M$ is calibrated by $\Ph$. \qedhere
\end{enumerate}
\end{defn}

In the list above, the case $k=1$ of [1] and cases [2a] and [3] correspond to those calibrations associated to nontrivial vector cross products~\cites{CKM, Gr-CR}. Their corresponding calibrated submanifolds are called \emph{instantons} by Lee--Leung~\cite{LL}. Case [2b] of coassociative submanifolds are \emph{branes} in the language of Lee--Leung~\cite{LL}. Because the coassociative calibration does not arise from a vector cross product, we need an additional result to establish our variational characterization in this case, namely, Proposition~\ref{prop:coassoc-condition}. Another class of calibrated submanifolds which are branes is the class of \emph{special Lagrangian submanifolds} in manifolds with $\SU{m}$-structure. See Section~\ref{sec:future} for a brief discussion on the special Lagrangian case, and why we did not consider it in this paper.

In Sections~\ref{sec:Um},~\ref{sec:G2}, and~\ref{sec:Spin7} we prove two theorems, in each of these four specific cases. \emph{Informally}, the two theorems are of the following general form in each case, albeit with some technical caveats.

\textbf{Theorem A}. If $M$ is \emph{calibrated} by $\mu$, then $\ol{g}$ is a critical point of $\cV$ with respect to variations in the subclass $\cG$.

\textbf{Theorem B.} If $\ol{g}$ is a critical point of $\cV$ with respect to variations in the subclass $\cG$, then $M$ is \emph{calibrated} by $\mu$.

These results appear in Theorems~\ref{thm:Um-A} and~\ref{thm:Um-B} for the almost complex case, Theorems~\ref{thm:assoc-A} and~\ref{thm:assoc-B} for the associative case, Theorems~\ref{thm:coassoc-A} and~\ref{thm:coassoc-B} for the coassociative case, and Theorems~\ref{thm:Cayley-A}  and~\ref{thm:Cayley-B} for the Cayley case.

\begin{rmk} \label{rmk:B-stronger}
Although Theorem B as stated is precisely a converse to Theorem A, our proof actually demonstrates more. Namely, we show that there is a \emph{particular} family of metric variations in the subclass $\cG$ such that if $\cV$ has vanishing first variation in these particular directions, then $M$ is calibrated by $\mu$. It then follows from Theorem A that $\cV$ must in fact have vanishing first variation with respect to \emph{all} directions in $\cG$.
\end{rmk}

\begin{rmk} \label{rmk:up-to-orientation}
In fact, the conclusion of Theorem B is that there exists an orientation on $M$ with respect to which $M$ is calibrated by $\mu$. Neither Theorem A nor Theorem B depend on the given orientation on $M$ in any significant way.
\end{rmk}

The following result from Riemannian geometry is used in all four cases to prove Theorem B.

\begin{defn} \label{defn:F}
For $x \in \ol{M}$, let $\delta(x) = \dist_{\ol{g}}(M, x)$ denote the distance with respect to $\ol{g}$ from $x$ to the embedded submanifold $M$. Since $M$ is compact, there is an open neighbourhood $U$ of $M$ in $\ol{M}$ on which the function $\frac{1}{2} \delta^2$ is smooth. We denote by $F \in C^{\infty} (\ol{M})$ a smooth nonnegative extension of $\frac{1}{2} \delta^2$ to the whole of $\ol{M}$. Let $H$ denote the Hessian $\ol{\nabla}^2 F$ of $F$ with respect to $\ol{g}$, which is a symmetric 2-tensor on $\ol{M}$, so $H_{ij} = \ol{\nabla}_i \ol{\nabla}_j F$.
\end{defn}

\begin{lemma} \label{lemma:distance-function}
Let $F \in C^{\infty} (\ol{M})$ and $H = \ol{\nabla}^2 F$ be as in Definition~\ref{defn:F}. Then we have
\begin{equation} \label{eq:gradient-F}
\ol{\nabla} F = 0 \quad \text{along the submanifold $M$}.
\end{equation}
Moreover, we have
\begin{equation} \label{eq:Hessian-F}
(\ol{\nabla}^2 F) (X, Y) = H (X, Y) = \ol{g} (X^{\perp}, Y^{\perp}) \quad \text{along the submanifold $M$}.
\end{equation}
Via the musical isomorphism of $\ol{g}$, we can also write~\eqref{eq:Hessian-F} in the useful form
\begin{equation} \label{eq:Hessian-Fb}
\ol{\nabla}_X (\ol{\nabla} F) = X^{\perp} \quad \quad \text{along the submanifold $M$}.
\end{equation}
\end{lemma}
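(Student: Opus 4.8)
The plan is to exploit the fact that $F$ agrees with $\tfrac{1}{2}\delta^2$ on the tubular neighbourhood $U$ and to reduce everything to the behaviour of the unit-speed normal geodesics emanating from $M$, via the Gauss lemma. For the first identity \eqref{eq:gradient-F}, the argument is immediate: since $F$ is a nonnegative extension of $\tfrac12\delta^2$ and $\delta$ vanishes precisely on $M$, every point of $M$ is a global minimum of $F$, so $dF = 0$ and hence $\ol{\nabla} F = 0$ along $M$.

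For the Hessian, I would set $G := \ol{\nabla} F$ and use that $\ol{\nabla}_X G$ is tensorial (i.e.\ $C^\infty$-linear) in $X$. It therefore suffices to compute $\ol{\nabla}_X G$ separately on tangent and on normal vectors at a point $p \in M$, and then recombine using $X = X^T + X^{\perp}$. On $U \setminus M$ the distance $r = \delta$ is smooth and satisfies the eikonal equation $\ol{g}(\ol{\nabla} r, \ol{\nabla} r) = 1$, with $\ol{\nabla} r$ equal to the unit tangent of the minimizing normal geodesic (Gauss lemma). Since $F = \tfrac12 r^2$ there, we have $G = r \, \ol{\nabla} r$ on $U \setminus M$.

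The two cases then go as follows. For a unit normal $\nu \in N_p M$, let $\gamma$ be the geodesic with $\gamma(0) = p$ and $\gamma'(0) = \nu$. For small $t > 0$ this geodesic realizes the distance, so $r(\gamma(t)) = t$ and $\ol{\nabla} r (\gamma(t)) = \gamma'(t)$, giving $G(\gamma(t)) = t\, \gamma'(t)$. The right-hand side extends smoothly across $t = 0$ and agrees with the smooth vector field $G \circ \gamma$ for $t > 0$, hence for all $t \geq 0$; differentiating covariantly along $\gamma$ and using $\ol{\nabla}_{\gamma'} \gamma' = 0$ yields $\ol{\nabla}_{\gamma'(t)} G = \gamma'(t)$, so at $t = 0$ we get $\ol{\nabla}_\nu G = \nu = \nu^{\perp}$. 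By linearity this holds for every normal vector. For a tangent vector $X^T \in T_p M$, choose a curve $\sigma$ in $M$ with $\sigma'(0) = X^T$; since $G \equiv 0$ on $M$ by \eqref{eq:gradient-F}, we have $G \circ \sigma \equiv 0$ and therefore $\ol{\nabla}_{X^T} G = 0 = (X^T)^{\perp}$. Recombining gives $\ol{\nabla}_X G = X^{\perp}$, which is \eqref{eq:Hessian-Fb}; pairing with $Y$ and using $\ol{g}(X^{\perp}, Y^T) = 0$ then yields $(\ol{\nabla}^2 F)(X,Y) = \ol{g}(X^{\perp}, Y) = \ol{g}(X^{\perp}, Y^{\perp})$, which is \eqref{eq:Hessian-F}.

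The main obstacle to make rigorous is the non-smoothness of $r$ across $M$: the identity $G = r\,\ol{\nabla} r$ is only valid off $M$, so the normal-direction computation must be phrased as a limit $t \to 0^+$ and justified by the smoothness of $F$ (hence of $G$) on all of $U$ together with the smoothness of $t \mapsto t\,\gamma'(t)$. This is also where compactness of $M$ enters, ensuring a genuine tubular neighbourhood on which short normal geodesics minimize and the Gauss lemma applies. It is worth emphasizing the slightly surprising feature of \eqref{eq:Hessian-F}, namely that the second fundamental form of $M$ does \emph{not} appear: its contribution to $\ol{\nabla}_X G$ is weighted by a factor of $r$, which vanishes on $M$, leaving only the orthogonal projection.
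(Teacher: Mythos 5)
Your argument is correct, and it reaches \eqref{eq:Hessian-F} by a genuinely different route from the paper. The paper's proof is a coordinate computation: it introduces Fermi coordinates $(x,y)$ adapted to $M$, notes that $F = \tfrac12\sum_\alpha (y^\alpha)^2$ there, that the metric is block-diagonal with identity normal block at points of $M$, and that the relevant Christoffel symbols vanish, so that $H_{ij} = \partial_i\partial_j F - \Gamma^l_{ij}\partial_l F$ reduces to $\mathrm{diag}(0,\delta_{\alpha\beta})$. Your proof instead works invariantly: you split $\ol{\nabla}_X(\ol{\nabla} F)$ by tensoriality into tangential and normal contributions, kill the tangential part using $\ol{\nabla} F \equiv 0$ on $M$, and compute the normal part by covariant differentiation of $t\mapsto t\,\gamma'(t)$ along a unit-speed normal geodesic, using the Gauss lemma and the eikonal equation $|\ol{\nabla}\delta| = 1$ off $M$. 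Both arguments are sound and of comparable length; yours has the advantage of directly producing the form \eqref{eq:Hessian-Fb} and of making transparent \emph{why} the second fundamental form is absent (its contribution carries a factor of $\delta$, which vanishes on $M$), while the paper's has the advantage of requiring only the standard properties of Fermi coordinates quoted from \cite{Lee-Riem} and no limiting argument across the locus where $\delta$ fails to be smooth. Your handling of that non-smoothness --- identifying $\ol{\nabla} F\circ\gamma$ with the smooth field $t\,\gamma'(t)$ for $t>0$ and extending the identity to $t=0$ by continuity of both sides and their derivatives --- is the one genuinely delicate point, and you address it adequately.
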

\begin{proof}
Since $F \geq 0$ everywhere on $\ol{M}$ and $F \equiv 0$ on $M$, we see that $F$ has an absolute minimum at all points on $M$ and thus $\ol{\nabla} F$ must vanish on $M$, establishing~\eqref{eq:gradient-F}.

The identity~\eqref{eq:Hessian-F} was stated in~\cite[Proposition 2.5]{AZ1} without proof. We outline a proof here for completeness. Inside a tubular neighbourhood $U$ of $M$ in $\ol{M}$, we can consider \emph{local Fermi coordinates}, which are defined as follows. Fix local coordinates $x^1, \ldots, x^k$ for $M$ centred at $x \in M$, and let $\{ \nu_1, \ldots, \nu_{n-k} \}$ be a local orthonormal frame for the normal bundle $NM$ defined on the same neighbourhood of $x$ in $M$. The Riemannian exponential map $\exp$ of $(\ol{M}, \ol{g})$ determines the local Fermi coordinates $(x, y) = (x^1, \ldots, x^k, y^1, \ldots y^{n-k})$, where the inverse of the coordinate chart is given by
$$ \Upsilon (x, y) = \exp_{x} \Big( \sum_{\alpha=1}^{n-k} y^{\alpha} \nu_{\alpha} \Big). $$
It is easy to show that~\cite[Proposition 5.26]{Lee-Riem} we have
\begin{equation} \label{eq:Hess-id-1}
\Upsilon^* \ol{g} = \begin{pmatrix} \ast & 0 \\ 0 & \delta_{\alpha \beta} \end{pmatrix} \quad \text{ at $x \in M$}
\end{equation}
where $\ast$ is some irrelevant $k \times k$ block matrix, and $\delta_{\alpha \beta}$ is the $(n-k) \times (n-k)$ identity matrix. Moreover, the Christoffel symbols $\Gamma^l_{ij}$ of $\ol{\nabla}$, evaluated at $x \in M$, vanish for $k+1 \leq i, j \leq n$. By the properties of the exponential map, in these coordinates the function $F = \frac{1}{2} \delta^2$ is $F(x,y) = \frac{1}{2} \sum_{\alpha=1}^{n-k} (y^{\alpha})^2$. The Hessian $H = \ol{\nabla}^2 F$ is $H_{ij} = \partial_i \partial_j F - \Gamma^l_{ij} \partial_l F$, and thus we have
\begin{equation} \label{eq:Hess-id-2}
\Upsilon^* H = \begin{pmatrix} 0 & 0 \\ 0 & \delta_{\alpha \beta} \end{pmatrix} \quad \text{ at $x \in M$}.
\end{equation}
Comparing the expressions~\eqref{eq:Hess-id-1} and~\eqref{eq:Hess-id-2} yields the identity~\eqref{eq:Hessian-F}.
\end{proof}

\begin{rmk} \label{rmk:lf}
We frequently have occasion to compute expressions that involve a complicated term that is linear in $\ol{\nabla} F$. Because such terms vanish on the submanifold $M$ by~\eqref{eq:gradient-F}, we do not have to keep careful track of them. Therefore, we henceforth write $\lf$ for terms that are linear in $\ol{\nabla} F$.
\end{rmk}

\subsection{Almost complex submanifolds of manifolds with $\U{m}$-structure} \label{sec:Um}

In this section we consider the case when $\ol{M}$ has a $\U{m}$-structure. Suppose that $n=2m$ is even and let $(\ol{g}, J, \omega)$ be a $\U{m}$-structure on $\ol{M}$. Thus $J$ is an almost complex structure that is orthogonal with respect to $\ol{g}$ (we do \emph{not} assume that $J$ is integrable), and $ \omega(X, Y) = \ol{g}(JX, Y)$ is the associated $(1,1)$-form, which \emph{need not} be closed. It follows that
\begin{equation} \label{eq:Um-fundamental}
\ol{g}(X, Y) = \omega(X, JY).
\end{equation}
Let $M$ be a compact oriented embedded $2k$-dimensional submanifold of $\ol{M}$ where $1 \leq k \leq m-1$. We consider the functional $\cV$ of~\eqref{eq:functional} in this setting, and consider the following restricted class of variations of the ambient metric $\ol{g}$.

Let $\alpha_t$ be a family of compactly supported smooth $1$-forms on $\ol{M}$ with $\alpha_{t=0} = 0$. Then
\begin{equation} \label{eq:Um-omegat}
\wt \omega_t = \omega + d \alpha_t
\end{equation}
is a one-parameter family of smooth $2$-forms on $\ol{M}$ with $\wt \omega_{t=0} = \omega$. Note that if $d \omega = 0$, then $\wt \omega_t$ lies in the same cohomology class as $\omega$ for all $t$, but we do not assume that $d \omega = 0$.

Note that the almost complex structure $J$ is not varying. Consider the type $(p,q)$ decomposition of $\wt \omega_t$. Let
\begin{equation} \label{eq:Um-omegat-2}
\omega_t = \wt \omega_t^{(1,1)}, \qquad \rho_t = \wt \omega_t^{(2,0) + (0,2)}.
\end{equation}
Then we have
$$ \omega_t (JX, JY) = \omega_t (X, Y), \qquad \rho_t (JX, JY) = - \rho_t (X, Y), $$
from which it follows that
$$ \text{$\omega_t (X, JY)$ is symmetric in $X, Y$ and $\rho_t (X, JY)$ is skew-symmetric in $X, Y$.} $$

Given the family $\wt \omega_t = \omega_t + \rho_t$, we define a new symmetric $2$-tensor $\ol{g}_t$ on $\ol{M}$ by
\begin{equation} \label{eq:Um-metric}
\begin{aligned}
\ol{g}_t (X, Y) = \omega_t (X, JY) & = \frac{1}{2} \big( \omega_t (X, JY) + \omega_t (Y, JX) \big) \\
& = \frac{1}{2} \big( \wt \omega_t (X, JY) + \wt \omega_t (Y, JX) \big).
\end{aligned}
\end{equation}
It is clear $\ol{g}_t (JX, JY) = \ol{g}_t (X, Y)$, so $J$ is $\ol{g}_t$ orthogonal for all $t$. When $t=0$ we get from~\eqref{eq:Um-fundamental} that $\ol{g}_{t=0} = \ol{g}$. Thus, for $|t|$ sufficiently small, $\ol{g}_t$ is positive definite. Moreover, we have
$$ \omega_t (X, Y) = \ol{g}_t (JX, Y), $$
so the triple $(\ol{g}_t, J, \omega_t)$ is a $\U{m}$-structure on $\ol{M}$ for all $|t|$ sufficiently small. We remark that the $(2,0) + (0,2)$ component $\rho_t$ of $\wt \omega_t$ does not contribute to the variation of the metric $\ol{g}_t$. A similar situation occurs in the associative, coassociative, and Cayley cases later in the paper.

Write $\dot{\alpha}_t$ for $\ddt \alpha_t$. We therefore have
$$ \ddt \wt \omega_t =  d \dot{\alpha}_t. $$
From~\eqref{eq:Um-metric} we have $\ol{g}_t (X, X) = \omega_t (X, JX) = \wt \omega_t (X, JX)$, and thus from the above equation we obtain
\begin{equation} \label{eq:Um-metric-time}
h_t(X, X) = d \dot{\alpha}_t (X, J X).
\end{equation}

Recall from Definition~\ref{defn:structures} that the submanifold $M$ is \emph{almost complex} (or $J$-holomorphic) if its tangent spaces $T_x M$ are preserved by $J$, and that $J$ is not varying in time.

\begin{thm}[Theorem A for $\U{m}$-structures] \label{thm:Um-A}
Let $M$ be an almost complex submanifold of $\ol{M}$, with $\dim M = 2k$ where $1 \leq k \leq m-1$. For small $|t|$, let $\ol{g}_t$ be a family of metrics on $\ol{M}$ of the form $\ol{g}_t (X, Y) = \omega_t (X, JY)$ where $\omega_t = \omega + (d \alpha_t)^{(1,1)}$ and $\alpha_t$ is a family of compactly supported smooth $1$-forms on $\ol{M}$ with $\alpha_{t=0} = 0$.
\begin{enumerate}[(a)]
\item Suppose $k = 1$, with \emph{no conditions on the torsion} of the $\U{m}$-structure. Then the functional $\cV$ is actually \emph{constant} on the set of metrics $\ol{g}_t$ of this form. In particular, $\ol{g}$ is a critical point of $\cV$ with respect to variations in this class.
\item Suppose $k > 1$, with \emph{no conditions on the torsion} of the $\U{m}$-structure, and suppose further that
\begin{equation} \label{eq:omegakdot-exact}
\rest{(d \dot{\alpha}_t \w \omega^{k-1}_t)}{M} \quad \text{is \emph{exact} at time $t$. (Note that this condition \emph{depends} on $M$.)}
\end{equation}
Then $\ddt \cV(\ol{g}_t) = 0$ at this time $t$. In particular, if~\eqref{eq:omegakdot-exact} holds at $t=0$, then $\ol{g}$ is a critical point of $\cV$ with respect to variations in this class.
\item Suppose $k > 1$ and $d \omega = 0$. Then $\rest{\ddt}{t=0} \cV(\ol{g}_t) = 0$. In particular, $\ol{g}$ is a critical point of $\cV$ with respect to variations in this class.
\end{enumerate}
\end{thm}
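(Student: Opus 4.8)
The plan is to leverage the fact that the almost complex structure $J$ is held fixed throughout the family, so that the submanifold $M$ remains $J$-holomorphic with respect to \emph{every} member of the family. As established in the discussion preceding the theorem, each triple $(\ol g_t, J, \omega_t)$ is a genuine $\U m$-structure on $\ol M$ with fundamental $(1,1)$-form $\omega_t$. Since $T_xM$ is $J$-invariant at every point, Wirtinger's equality (recorded in Definition~\ref{defn:structures}) applies at every time, giving
\[ \vol_{(M,g_t)} = \tfrac{1}{k!}\rest{\omega_t^k}{M} \]
up to an overall sign that is fixed (independent of $t$, since $J$ does not vary) and is immaterial to the conclusion; cf.\ Remark~\ref{rmk:up-to-orientation}. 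The value of this identity is that it recasts the volume functional, a priori a complicated function of the restricted metric, as the integral of a differential form,
\[ \cV(\ol g_t) = \tfrac{1}{k!}\int_M \rest{\omega_t^k}{M}, \]
which bypasses the trace computation of Lemma~\ref{lemma:first-variation} entirely.

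First I would differentiate this expression. Since $\omega_t$ is a $2$-form and $\ddt\omega_t = (d\dot\alpha_t)^{(1,1)}$, the Leibniz rule for wedge powers gives
\[ \ddt\cV(\ol g_t) = \tfrac{1}{(k-1)!}\int_M \rest{\big(\omega_t^{k-1}\w (d\dot\alpha_t)^{(1,1)}\big)}{M}. \]
The crucial step is then to replace the $(1,1)$-part $(d\dot\alpha_t)^{(1,1)}$ by the full exact form $d\dot\alpha_t$, which is precisely why hypothesis~\eqref{eq:omegakdot-exact} is phrased in terms of $d\dot\alpha_t$. Because $M$ is almost complex, the inclusion intertwines $J$ with the induced almost complex structure on $M$, so pullback to $M$ respects the $(p,q)$-decomposition; and since $M$ has complex dimension $k$, every form of type $(p,q)$ with $p > k$ or $q > k$ vanishes on $M$. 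The discarded piece $\omega_t^{k-1}\w(d\dot\alpha_t)^{(2,0)+(0,2)}$ is of type $(k+1,k-1)+(k-1,k+1)$ and therefore restricts to zero on $M$. Hence
\[ \ddt\cV(\ol g_t) = \tfrac{1}{(k-1)!}\int_M \rest{\big(d\dot\alpha_t\w\omega_t^{k-1}\big)}{M}. \]

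With this formula, all three parts follow from Stokes' theorem on the closed manifold $M$. For (a), taking $k=1$ makes the integrand $\rest{d\dot\alpha_t}{M} = d\rest{\dot\alpha_t}{M}$ exact for every $t$, so $\ddt\cV \equiv 0$ and $\cV$ is constant along the family. For (b), the integrand is exact at the given time $t$ by hypothesis~\eqref{eq:omegakdot-exact}, so the integral vanishes. For (c), at $t=0$ we have $\omega_0=\omega$ with $d\omega=0$, whence $d\omega^{k-1}=0$ and the Leibniz rule gives $d(\dot\alpha_0\w\omega^{k-1}) = d\dot\alpha_0\w\omega^{k-1}$; thus the integrand is exact and its integral over $M$ vanishes.

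The main obstacle I anticipate is not a single hard estimate but the careful bookkeeping in the bidegree reduction: one must justify that pullback to the almost complex submanifold $M$ commutes with the $(p,q)$-decomposition even though $J$ is not assumed integrable, and that the mixed-type components genuinely vanish for dimensional reasons. Once this is in place, the argument is just the pointwise Wirtinger equality followed by Stokes' theorem, with the three cases differing only in how one exhibits the relevant integrand as exact.
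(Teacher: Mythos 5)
Your argument is correct, and it arrives at the same key identity as the paper---namely $\ddt \cV(\ol{g}_t) = \frac{1}{(k-1)!}\int_M \rest{(d\dot\alpha_t \w \omega_t^{k-1})}{M}$---by a genuinely different route from the paper's main proof of Theorem~\ref{thm:Um-A}. The paper goes through the first variation formula of Lemma~\ref{lemma:first-variation}: it picks an adapted frame $\{e_1, Je_1, \ldots, e_m, Je_m\}$, computes $\tr_{g_t}h_t = 2\sum_{a=1}^k d\dot\alpha_t(e_a, Je_a)$ from~\eqref{eq:Um-metric-time}, and then identifies $\tfrac{1}{2}(\tr_{g_t}h_t)\,\vol_{(M,g_t)}$ with $\iota^*\big(d\dot\alpha_t \w \tfrac{1}{(k-1)!}\omega_t^{k-1}\big)$ by an explicit wedge computation. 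You instead invoke Wirtinger's equality at \emph{every} time $t$ (legitimate precisely because $J$ is held fixed, so $M$ remains almost complex for each $(\ol{g}_t, J, \omega_t)$) to write $\cV(\ol{g}_t) = \tfrac{1}{k!}\int_M \rest{\omega_t^k}{M}$ outright, and then a pointwise bidegree argument discards the $(2,0)+(0,2)$ part of $d\dot\alpha_t$ upon restriction; your justification of that step (pullback to an almost complex submanifold preserves $(p,q)$-type by linear algebra alone, no integrability needed, and types with $p>k$ or $q>k$ die on $M$) is sound. In fact the authors record essentially your argument as the ``alternate proof'' in Section~\ref{sec:thm-A-alt} and acknowledge it is simpler; they keep the trace computation as the main proof because it is the version that generalizes: in the $\G$ and especially the $\Spin{7}$ cases one cannot simply express $\vol_{(M,g_t)}$ as the restriction of the calibration form without first determining, via exactly such a trace computation, which components of the variation of the calibration actually move the induced metric and hence what the admissible class of variations should be. The only point worth making fully explicit in your write-up is that the Wirtinger sign is locally constant on $M$ and independent of $t$, so it passes harmlessly through both the $t$-derivative and the componentwise application of Stokes's theorem.
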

\begin{proof}
Fix a time $t$ in the domain of the family $\ol{g}_t$. Let $g_t$ and $h_t$ be as in Lemma~\ref{lemma:first-variation}. Since $M$ is almost complex, and $J$ is orthogonal with respect to $\ol{g}_t$, we can choose a local adapted frame for $(\ol{M}, \ol{g}_t)$ of the form
$$\{ e_1, J e_1, \ldots, e_k, J e_k, e_{k+1}, J e_{k+1}, \ldots, e_m, J e_m \} $$
so that, along $M$, the first $2k$ elements are an oriented orthonormal frame for $(M, g_t)$ and the last $2(m-k)$ elements are an oriented orthonormal frame for the normal bundle of $M$ in $\ol{M}$ with respect to $\ol{g}_t$. We need to compute the integrand $(\tr_{g_t} h_t)  \vol_{(M, g_t)} = (g_t)^{ab} (h_t)_{ab}  \vol_{(M, g_t)}$ of~\eqref{eq:first-variation}. Using~\eqref{eq:Um-metric-time} and our adapted frame we compute
\begin{equation} \label{eq:Um-A-g}
\begin{aligned}
(g_t)^{ab} (h_t)_{ab} & = \sum_{a=1}^k h_t(e_a, e_a) + \sum_{a=1}^k h_t(J e_a, J e_a) = \sum_{a=1}^k d \dot{\alpha}_t (e_a, J e_a) + \sum_{a=1}^k d \dot{\alpha}_t (J e_a, J^2 e_a) \\
& = \sum_{a=1}^k d \dot{\alpha}_t (e_a, J e_a) - \sum_{a=1}^k d \dot{\alpha}_t (J e_a, e_a) = 2 \sum_{a=1}^k d \dot{\alpha}_t (e_a, J e_a).
\end{aligned}
\end{equation}
To continue, let $\iota \colon M \to \ol{M}$ denote the inclusion, which is a smooth embedding. We have
\begin{equation} \label{eq:Um-A-tempb}
\begin{aligned}
\iota^* d \dot{\alpha}_t & = \frac{1}{2} \sum_{a,b=1}^k d \dot{\alpha}_t (e_a, e_b) e_a \w e_b + \frac{1}{2} \sum_{a,b=1}^k d \dot{\alpha}_t (e_a, J e_b) e_a \w J e_b \\
& \qquad {} + \frac{1}{2} \sum_{a,b=1}^k d \dot{\alpha}_t (J e_a, e_b) J e_a \w e_b + \frac{1}{2} \sum_{a,b=1}^k d \dot{\alpha}_t (J e_a, J e_b) J e_a \w J e_b.
\end{aligned}
\end{equation}

When $k=1$, simplifying the expression~\eqref{eq:Um-A-tempb} and using~\eqref{eq:Um-A-g}, we obtain
\begin{equation} \label{eq:Um-A-tempc}
\iota^* d \dot{\alpha}_t = d \dot{\alpha}_t (e_1, J e_1) e_1 \w J e_1 = \frac{1}{2} \big( (g_t)^{ab} (h_t)_{ab} \big) \vol_{(M, g)}.
\end{equation}
Thus, by~\eqref{eq:first-variation} and Stokes's Theorem, we conclude that
$$\ddt \cV (\ol{g}_t) = \int_M \iota^* d \dot{\alpha}_t = 0 \quad \text{for all $t$},$$
so $\cV$ is \emph{constant} on the set of metrics $\ol{g}_t$ of the form~\eqref{eq:Um-metric} with $\omega_t$ given by~\eqref{eq:Um-omegat-2} and~\eqref{eq:Um-omegat}. In particular, $\ol{g}$ is a critical point of $\cV$. This establishes part (a).

Now consider the case $k > 1$. Since $(\ol{g}_t, J, \omega_t)$ is a $\U{m}$-structure on $\ol{M}$ and $M$ is an almost complex submanifold, with respect to our adapted frame we have $\omega_t = \sum_{a=1}^m e_a \w J e_a$. Then we have
$$ \iota^* \omega_t = \sum_{a=1}^k e_a \w J e_a, $$
and hence
\begin{equation} \label{eq:Um-A-temp}
\iota^* \Big( \frac{1}{(k-1)!} \omega_t^{k-1} \Big) = \sum_{a=1}^k (e_1 \w J e_1) \w \cdots \w \wh{(e_a \w J e_a)} \w \cdots \w (e_k \w J e_k).
\end{equation}
We take the wedge product of~\eqref{eq:Um-A-tempb} with~\eqref{eq:Um-A-temp}. It is clear that only the second and third terms of~\eqref{eq:Um-A-tempb} will contribute, and after some simplification and using~\eqref{eq:Um-A-g}, we obtain
\begin{equation} \label{eq:Um-A-temp2}
\begin{aligned}
\iota^* \Big( d \dot{\alpha}_t \w \frac{1}{(k-1)!} \omega_t^{k-1} \Big) & = \Big( \sum_{a=1}^k d \dot{\alpha}_t (e_a, J e_a) \Big) e_1 \w J e_1 \w \cdots \w e_k \w J e_k \\
& = \Big( \sum_{a=1}^k d \dot{\alpha}_t (e_a, J e_a) \Big) \vol_{(M, g)} = \frac{1}{2} \big( (g_t)^{ab} (h_t)_{ab} \big) \vol_{(M, g)}.
\end{aligned}
\end{equation}
Suppose that condition~\eqref{eq:omegakdot-exact} holds, so $\frac{1}{(k-1)!} d \dot{\alpha}_t \w \omega^{k-1}_t$ is \emph{exact}. Then its pullback is also exact, and thus using~\eqref{eq:first-variation} and Stokes's Theorem, the above expression yields
\begin{equation}
\ddt \cV (\ol{g}_t) = 0 \qquad \text{at time $t$, if~\eqref{eq:omegakdot-exact} holds at this time.} \label{eq:Um-A-temp3}
\end{equation}
In particular, if~\eqref{eq:omegakdot-exact} holds at $t=0$, then $\ol{g}$ is a critical point of $\cV$. This establishes part (b).

In general, we can write
$$ d \dot{\alpha}_t \w \frac{1}{(k-1)!} \omega_t^{k-1} = d \Big( \dot{\alpha}_t \w \frac{1}{(k-1)!} \omega_t^{k-1} \Big) - \dot{\alpha}_t \w d \omega_t \w \Big( \frac{1}{(k-2)!} \omega_t^{k-2} \Big).$$
Substituting the above into~\eqref{eq:Um-A-temp2} and using~\eqref{eq:first-variation} and Stokes's Theorem, and writing $\rest{\gamma}{M}$ for $\iota^* \gamma$ for any form $\gamma$, we conclude that
\begin{equation} \label{eq:Um-A-temp4}
\ddt \cV (\ol{g}_t) = - \frac{1}{(k-2)!} \int_M (\rest{\dot{\alpha}_t}{M}) \w (d \rest{\omega_t}{M}) \w (\rest{\omega_t}{M})^{k-2}.
\end{equation}
If $d \omega = 0$, the right hand side above vanishes at $t=0$. Thus if $d\omega = 0$ then $\ol{g}$ is a critical point of $\cV$. This establishes part (c).
\end{proof}

We make some remarks about Theorem~\ref{thm:Um-A}.

\begin{rmk} \label{rmk:Um-A-a}
Part (b) says that if the family $\omega_t$ is such that the restriction of $d \dot \alpha_t \w \omega_t^{k-1}$ to $M$ is \emph{exact} for all $t$, then $\cV(\ol{g}_t)$ is \emph{constant}. This clearly always holds when $k=1$, consistent with part (a).

Part (a) was proved by Arezzo--Sun~\cite[Proposition 2.3]{AZ1}, assuming that $d \omega = 0$, but we have shown that this assumption is not necessary.
\end{rmk}

\begin{rmk} \label{rmk:Um-A-b}
For part (c), it is evident from~\eqref{eq:Um-A-temp4} that the seemingly weaker condition that $d \omega^{k-1} = 0$ suffices. However, since $k < m$, we have $k-1 < m-1$, and it follows~\cite[Proposition 1.2.30, (iv)]{Huybrechts} from the Lefschetz decomposition of $d \omega$ that $d \omega^j = 0 \iff d \omega = 0$ when $j < m-1$, so the condition $d \omega^{k-1} = 0$ is not actually weaker.

Nevertheless, the condition $d \omega = 0$ of part (c) is sufficient but not necessary. Let $d \rest{\omega_t}{M} = \theta_t \w \rest{\omega_t}{M} + \zeta_t$ be the Lefschetz decomposition of $d \rest{\omega_t}{M}$, where $\theta_t$ is the \emph{Lee form} of the almost Hermitian manifold $(M, J, \rest{\ol{g}_t}{M}, \rest{\omega_t}{M})$, and $\zeta_t$ is a primitive $3$-form. Then it is not hard to verify that $\ddt \cV (\ol{g}_t) = 0$ holds at time $t$, provided that $\dot{\alpha}_t$ is $g_t$ orthogonal to $\theta_t$. In particular, if we want this to hold at time $t=0$ for all $\dot{\alpha}_{t=0}$, then we must have $\theta_{t=0} = 0$. Thus in general, for $k > 1$, if we want $\ol{g}$ to be a critical point of $\cV$ when $M$ is almost complex, then the space of allowable metric variations~\eqref{eq:Um-metric} with $\omega_t$ given by~\eqref{eq:Um-omegat-2} and~\eqref{eq:Um-omegat} \emph{depends} on the submanifold $M$ of $\ol{M}$. (Compare with Remark~\ref{rmk:Cayley-cannot-fix}.)
\end{rmk}

The ``converse'' (see Remark~\ref{rmk:B-stronger}) of Theorem~\ref{thm:Um-A} is much simpler to state, as there are no subcases.

\begin{thm}[Theorem B for $\U{m}$-structures] \label{thm:Um-B}
Suppose that $\ol{g}$ is a critical point of the functional $\cV$ with respect to variations of the form $\ol{g}_t (X, Y) = \omega_t (X, JY)$ where $\omega_t = \omega + (d \alpha_t)^{(1,1)}$ and $\alpha_t$ is a family of compactly supported smooth $1$-forms on $\ol{M}$ with $\alpha_{t=0} = 0$. Then the submanifold $M$ is almost complex.
\end{thm}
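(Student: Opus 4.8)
The plan is to invoke the first variation formula of Lemma~\ref{lemma:first-variation} for a single, explicitly chosen variation rather than analyzing all of $\cG$ at once, exactly as anticipated in Remark~\ref{rmk:B-stronger}. The key observation is that since $\alpha_{t=0} = 0$ and $\alpha_t$ is otherwise unconstrained, the derivative $\dot{\alpha} := \rest{\ddt}{t=0}\alpha_t$ may be taken to be \emph{any} compactly supported smooth $1$-form on $\ol{M}$; thus it suffices to produce one judicious choice whose first variation is manifestly signed and vanishes precisely when $M$ is almost complex. The natural candidate is built from the distance function $F$ of Definition~\ref{defn:F}: I would take $\dot{\alpha}$ to be the $1$-form $\ol{g}$-dual to the vector field $J\,\ol{\nabla}F$, that is, $\dot{\alpha} = \omega(\ol{\nabla}F, \cdot\,)$, multiplied by a cutoff equal to $1$ near $M$ so that it has compact support without altering any quantity computed along $M$.

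With this choice, Lemma~\ref{lemma:first-variation} reduces the problem to computing $\tr_g h$, where $h(X,X) = d\dot{\alpha}(X, JX)$ by~\eqref{eq:Um-metric-time}. The crucial point is that the dual vector field $\dot{\alpha}^\sharp = J\,\ol{\nabla}F$ \emph{vanishes along $M$} by~\eqref{eq:gradient-F}. Writing $d\dot{\alpha}(X,Y) = \ol{g}(\ol{\nabla}_X\dot{\alpha}^\sharp, Y) - \ol{g}(\ol{\nabla}_Y\dot{\alpha}^\sharp, X)$ and differentiating $J\,\ol{\nabla}F$ via the Leibniz rule, the term involving $\ol{\nabla}_X J$ drops out because $\ol{\nabla}F = 0$ on $M$, while the remaining term is handled by $\ol{\nabla}_X(\ol{\nabla}F) = X^\perp$ from~\eqref{eq:Hessian-Fb}. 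This yields the pointwise identity
\[
d\dot{\alpha}(X,Y) = \ol{g}(JX^\perp, Y) - \ol{g}(JY^\perp, X) \qquad \text{along } M.
\]
Substituting $X = e_a$ and $Y = J e_a$ for an oriented orthonormal frame $\{e_1, \ldots, e_{2k}\}$ of $TM$ (so that $e_a^\perp = 0$), the first term vanishes and the second simplifies, using $\omega(e_a, (Je_a)^\perp) = \ol{g}(Je_a, (Je_a)^\perp) = |(Je_a)^\perp|^2$, to give $d\dot{\alpha}(e_a, Je_a) = |(Je_a)^\perp|^2$. Hence
\[
\tr_g h = \sum_{a=1}^{2k} d\dot{\alpha}(e_a, Je_a) = \sum_{a=1}^{2k} |(J e_a)^\perp|^2.
\]

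I would then conclude by a positivity argument. The integrand $\sum_a |(Je_a)^\perp|^2$ is manifestly nonnegative, and it vanishes at a point $x \in M$ if and only if $(Je_a)^\perp = 0$ for every $a$, i.e. $J(T_x M) \subseteq T_x M$; since $J$ is $\ol{g}$-orthogonal this is equivalent to $J(T_x M) = T_x M$, precisely the condition that $M$ be almost complex at $x$. By~\eqref{eq:first-variation},
\[
\rest{\ddt}{t=0}\cV(\ol{g}_t) = \tfrac{1}{2}\int_M \Big(\sum_{a=1}^{2k} |(J e_a)^\perp|^2\Big)\,\vol_{(M,g)} \geq 0.
\]
If $\ol{g}$ is critical, this integral vanishes; as the integrand is pointwise nonnegative it must vanish identically, forcing $J$ to preserve $T_x M$ at every $x \in M$, which is the desired conclusion.

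The main obstacle is conceptual rather than computational: one must identify the correct test variation $\dot{\alpha} = \omega(\ol{\nabla}F, \cdot\,)$ and recognize that its first variation integrand $\sum_a |(Je_a)^\perp|^2$ is a nonnegative expression in the Kähler angles of $T_x M$ that vanishes exactly on the almost complex locus. Once~\eqref{eq:gradient-F} and~\eqref{eq:Hessian-Fb} are available, the differentiation is routine; the only minor checks are that the $(2,0)+(0,2)$ component of $d\dot{\alpha}$ does not affect the diagonal values $h(X,X)$ (already guaranteed by~\eqref{eq:Um-metric-time}) and that the cutoff ensuring compact support leaves all quantities evaluated along $M$ unchanged.
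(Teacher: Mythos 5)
Your proposal is correct and takes essentially the same route as the paper's own proof: the identical test variation $\dot\alpha$ dual to $J\ol{\nabla}F$ (the paper writes it as $\ol{\nabla}F \hk \omega$, with a cutoff left implicit), the same appeal to Lemma~\ref{lemma:distance-function}, and the same signed-integrand argument. The only cosmetic difference is the overall sign of $\tr_g h$ (the paper obtains $-\sum_a |(Je_a)^{\perp}|^2$ from its index conventions, you obtain $+\sum_a |(Je_a)^{\perp}|^2$), which traces to the sign convention for $\dot\alpha$ and is immaterial to the conclusion.
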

\begin{proof}
In this proof, the ranges for our indices are $1 \leq a, b \leq 2k$ and $1 \leq i, j, l, p, q \leq 2m$. That is, $\{ e_1, \ldots, e_{2m} \}$ is a local orthonormal frame for $\ol{M}$ adapted to the submanifold $M$.

Given any compactly supported smooth $1$-form $\dot\alpha$, let $\ol{g}_t$ be the variation obtained as above by taking $\alpha_t = t \dot \alpha$ for $|t|$ small. Using~\eqref{eq:first-variation}, our hypothesis then gives that
\begin{equation} \label{eq:Um-B-hypothesis}
\rest{\ddt}{t=0} \cV (\ol{g}_t) = \frac{1}{2} \int_M (\tr_g h) \, \vol_{(M, g)} = 0,
\end{equation}
where $h$ is given in terms of $\dot \alpha$ by~\eqref{eq:Um-metric-time}. That is, we have
\begin{equation} \label{eq:Um-B-start}
h_{ii} = J^q_i (d \dot \alpha)_{iq} \quad \text{(no sum on $i$.)}
\end{equation}
Taking the trace of $h$ in the tangential directions gives
\begin{align}
\tr_g h = \sum_{a=1}^{2k} h_{aa} & = \sum_{a=1}^{2k} J^q_a (d \dot \alpha)_{aq}. \label{eq:Um-h-trace}
\end{align}

We need to make an appropriate choice of $\dot \alpha$. We choose
\begin{equation} \label{eq:Um-test-variation}
\dot \alpha = J (\ol{\nabla} F) = \ol{\nabla} F \hk \omega
\end{equation}
where $F$ is as in Definition~\ref{defn:F}. (This is sometimes called the \emph{symplectic gradient} of $F$, but recall that we do not assume that $\omega$ is closed.) In terms of a local frame, we have $\dot \alpha_j = J^p_j \ol{\nabla}_p F$, and thus
\begin{align*}
(d \dot \alpha)_{ij} = \ol{\nabla}_i \dot \alpha_j - \ol{\nabla}_j \dot \alpha_i & = J^p_j \ol{\nabla}_i \ol{\nabla}_p F - J^p_i \ol{\nabla}_j \ol{\nabla}_p F + \lf \\
& = J^p_j H_{ip} - J^p_i H_{jp} + \lf.
\end{align*}
Recall that on $M$, the $\lf$ terms vanish. Thus, using $J_a^q J_q^p = - \delta_a^p$ and the above, along the submanifold $M$ equation~\eqref{eq:Um-h-trace} becomes
$$ \tr_g h = \sum_{a=1}^{2k} J^q_a (J^p_q H_{ap} - J^p_a H_{qp}) = \sum_{a=1}^{2k} ( - H_{aa} - J^p_a J^q_a H_{pq}  ). $$
Since $e_a$ is tangent along $M$ for $1 \leq a \leq 2k$, by~\eqref{eq:Hessian-F}, we have
$$ H_{aa} = | e_a^{\perp} |^2 = 0 \quad \text{and} \quad J^p_a J^q_a H_{pq} = H(Je_a, Je_a) = | (J e_a)^{\perp} |^2, $$ 
and thus
$$ \tr_g h = - \sum_{a=1}^{2k} | (Je_a)^{\perp} |^2. $$

Hence from the above and~\eqref{eq:Um-B-hypothesis}, we finally obtain
$$ 0 = \rest{\ddt}{t=0} \cV(\ol{g}_t) = - \frac{1}{2} \int_M \left( \sum_{a=1}^{2k} \big| (J e_a)^{\perp} \big|^2 \right) \vol_{(M, g)}. $$
The integrand must vanish as it is nonnegative. Hence $(J e_a)^{\perp} = 0$ for $1 \leq a \leq 2k$. Thus $J e_a$ is tangent to $M$ for $1 \leq a  \leq 2k$ and we conclude that $M$ is almost complex.
\end{proof}

\begin{rmk} \label{rmk:Um-B}
Theorem~\ref{thm:Um-B} was proved by Arezzo--Sun~\cite[Theorem 1.2]{AZ2}, assuming that $d \omega = 0$, but we have shown that this assumption is not necessary. See Remark~\ref{rmk:Um-A-a}.
\end{rmk}

It is worthwhile to be more precise about the family of variations $\wt \omega_t$ considered in the work of Arezzo--Sun in~\cite{AZ1, AZ2}, to compare and contrast with our choices of variations for the $\G$ and $\Spin{7}$ calibrations later in Sections~\ref{sec:G2} and~\ref{sec:Spin7}. Arezzo--Sun considered variations of the form $\wt \omega_t = \omega + d d^c f_t$ for some one-parameter family of real functions $f_t$, where $d^c f_t = J^{-1} d (J f_t) = - J d f_t = - \ol{\nabla} f_t \hk \omega$.

On functions we always have $d^c = \sqrt{-1} (\delbar - \del)$, even if $J$ is not integrable. Thus we have $d d^c f_t = \sqrt{-1} d (\delbar f_t - \del f_t)$. If $J$ is assumed integrable, then $d = \del + \delbar$, so $d d^c f_t = 2 \sqrt{-1} \del \delbar f_t$ in this case. Independently, if $d \omega = 0$, then one can compute that $d d^* (f_t \omega) = d d^c f_t$. In fact this calculation only requires that the Lee form of $(\ol{M}, \ol{g}, J, \omega)$ vanishes, which is equivalent to $d^* \omega = 0$ or $\omega^{m-2} \w d \omega = 0$.

In summary, the one-parameter family $\alpha_t$ of Arezzo--Sun satisfied
\begin{equation} \label{eq:AZ}
d \alpha_t = d d^c f_t = \begin{cases} - d (\ol{\nabla} f_t \hk \omega)& \text{(always)}, \\
2 \sqrt{-1} \del \delbar f_t & \text{(if $J$ is integrable)}, \\
dd^* (f_t \omega) & \text{(if $\omega^{m-2} \w d \omega = 0$)}. \end{cases}
\end{equation}
In particular, their variations could equivalently have been stated to be of the form $d \alpha_t = d d^* (f_t \omega)$.

Recall from~\eqref{eq:Um-test-variation} that our test variation to prove Theorem~\ref{thm:Um-B} satisfied $d \dot \alpha = d (\ol{\nabla} F \hk \omega)$, which is exactly of the form $d d^c \dot f = - d( \ol{\nabla} \dot f \hk \omega)$ of Arezzo--Sun, with $\dot f = - F$. See the discussion in Remark~\ref{rmk:G2-attempts} below regarding our initial attempts to find the correct analogue of these variations $\alpha_t$ in order to successfully generalize Theorem B to $\G$-structures.

\subsection{Submanifolds of manifolds with $\G$-structure} \label{sec:G2}

In this section we consider the case when $\ol{M}$ has a $\G$-structure.

\subsubsection{Generalities on manifolds with $\G$-structure and their distinguished submanifolds} \label{sec:G2-general}

Suppose that $n=7$ and let $(\ph, \ol{g})$ be a $\G$-structure on $\ol{M}$. Here $\ph$ is a nondegenerate $3$-form which determines the Riemannian metric $\ol{g}$ in a nonlinear way. It also induces an orientation, and thus we have a Hodge dual $4$-form $\ps = \sta \ph$. See~\cites{Bryant, K1, K-flows, K-intro} for more about $\G$-structures. We do not assume anything about the \emph{torsion} $\ol{\nabla} \ph$ of the $\G$-structure.

There is a $2$-fold vector cross product $\times$ on $\ol{M}$ defined by
$$ \ph(X, Y, Z) = \ol{g}(X \times Y, Z).$$
That is, $\times$ is obtained from $\ph$ by raising an index using $\ol{g}$. It is clear that $X \times Y$ is orthogonal to each of $X, Y$. Moreover, $| X \times Y |^2 = | X \w Y |^2$.

Recall from Definition~\ref{defn:structures} that a $3$-dimensional orientable submanifold $M$ of $(\ol{M}, \ol{g}, \ph, \ps)$ is \emph{associative} if its tangent spaces $T_x M$ are preserved by the cross product --- that is, if $X \times Y$ is tangent to $M$ whenever $X, Y$ are both tangent to $M$. This is equivalent to the existence of an orientation on $M$ with respect to which $\rest{\ph}{M} = \vol_M$. To see this, let $X, Y, Z$ be orthonormal tangent vectors to $M$ and observe by Cauchy--Schwarz that
$$ | \ph(X, Y, Z) | = | \ol{g} (X \times Y, Z) | \leq | X \times Y | \, |Z| = | X \w Y | \, | Z | = 1 = | \vol_M (X, Y, Z) |, $$
with equality if and only if $X \times Y = \pm Z$. Note that when $d \ph = 0$, an associative submanifold is \emph{calibrated} by $\ph$ and hence minimal~\cite{HL}, but we do not assume $d \ph = 0$.

Similarly, there is a vector-valued $3$-form $\chi$ on $\ol{M}$ defined by
$$ \ps(X, Y, Z, W) = \ol{g}(\chi(X, Y, Z), W).$$
That is, $\chi$ is obtained from $\ps$ by raising an index using $\ol{g}$. While it is clear that $\chi(X, Y, Z)$ is orthogonal to each of $X, Y, Z$, the form $\chi$ is \emph{not} a vector cross product, because $| \chi(X, Y, Z) |^2$ \emph{does not} equal $|X \w Y \w Z|^2$. In fact, the failure of $\chi$ to be a vector cross product is precisely measured by the ``associative equality''
\begin{equation} \label{eq:assoc-equality}
| \chi(X, Y, Z) |^2 + \ph(X, Y, Z)^2 = |X \w Y \w Z|^2.
\end{equation}
Another important identity is the ``coassociative equality''
\begin{equation} \label{eq:coassoc-equality}
\ps(X, Y, Z, W)^2 + | \ph(Y, Z, W) X - \ph(X, Z, W) Y + \ph(X, Y, W) Z - \ph(X, Y, Z) W |^2 = | X \w Y \w Z \w W |^2.
\end{equation}
(See~\cite[Theorem 1.6, p.\ 114]{HL} for a proof of~\eqref{eq:assoc-equality} and~\cite[Theorem 1.18, p.\ 117]{HL} for a proof of~\eqref{eq:coassoc-equality}.) We remark in passing that~\eqref{eq:assoc-equality} shows that a $3$-dimensional submanifold $M$ is associative if and only if $\chi$ restricts to zero on $M$.

Recall from Definition~\ref{defn:structures} that a $4$-dimensional orientable submanifold $M$ of $(\ol{M}, \ol{g}, \ph, \ps)$ is \emph{coassociative} if $\rest{\ph}{M} = 0$. This is equivalent to the existence of an orientation on $M$ with respect to which $\rest{\ps}{M} = \vol_M$. To see this, let $X, Y, Z, W$ be orthonormal tangent vectors to $M$ and observe from~\eqref{eq:coassoc-equality} that
$$ \rest{\ph}{M} = 0 \, \, \text{ if and only if } \, \, | \ps(X, Y, Z, W) | = 1 = | \vol_M (X, Y, Z, W) |. $$
Note that when $d \ps = 0$, a coassociative submanifold is \emph{calibrated} by $\ps$ and hence minimal~\cite{HL}. But we do not assume $d \ps = 0$.

The next result is likely well-known to experts but we were unable to locate it in the literature. It is needed in the proof of Theorem~\ref{thm:coassoc-B}.

\begin{prop} \label{prop:coassoc-condition}
Let $M$ be a $4$-dimensional submanifold of $\ol{M}$. Then $M$ is coassociative if and only if $\chi(X, Y, Z)$ is tangent to $M$ whenever $X, Y, Z$ are all tangent to $M$.
\end{prop}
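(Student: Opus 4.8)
The plan is to prove the statement pointwise at each $x \in M$, using only the two algebraic identities already recorded: the associative equality~\eqref{eq:assoc-equality} and the coassociative equality~\eqref{eq:coassoc-equality}. Fix $x \in M$ and choose an oriented orthonormal frame $\{e_1, e_2, e_3, e_4\}$ of $T_x M$ together with an orthonormal frame $\{e_5, e_6, e_7\}$ of the normal space $N_x M$. Since $\chi(X,Y,Z)$ is always orthogonal to each of $X,Y,Z$, multilinearity shows that the condition ``$\chi(X,Y,Z)$ is tangent whenever $X,Y,Z$ are tangent'' is equivalent to the vanishing of all the mixed components
\[
\ps(e_i, e_j, e_k, e_\alpha) = \ol{g}(\chi(e_i, e_j, e_k), e_\alpha), \qquad 1 \leq i,j,k \leq 4, \ 5 \leq \alpha \leq 7.
\]
Thus the whole proposition reduces to comparing these ``three-tangent, one-normal'' components of $\ps$ with the components $\ph(e_i,e_j,e_k)$ ($1 \le i,j,k \le 4$), whose vanishing is precisely the statement that $M$ is coassociative at $x$.

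The key step is to feed tangent triples into each equality. Applying~\eqref{eq:assoc-equality} to an orthonormal tangent triple $(e_i,e_j,e_k)$ and decomposing $\chi(e_i,e_j,e_k)$, which is orthogonal to $e_i,e_j,e_k$, along the remaining basis vectors $\{e_l, e_5, e_6, e_7\}$ (where $e_l$ is the fourth tangent vector) yields
\[
\ps(e_i,e_j,e_k,e_l)^2 + \sum_{\alpha=5}^{7} \ps(e_i,e_j,e_k,e_\alpha)^2 + \ph(e_i,e_j,e_k)^2 = 1
\]
for each of the four tangent triples; here $\ps(e_i,e_j,e_k,e_l)^2 = \ps(e_1,e_2,e_3,e_4)^2$ is the same for every triple. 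Separately, applying~\eqref{eq:coassoc-equality} to $(e_1,e_2,e_3,e_4)$ and computing the squared norm of the vector appearing there gives
\[
\ps(e_1,e_2,e_3,e_4)^2 + \sum_{1 \le i<j<k \le 4} \ph(e_i,e_j,e_k)^2 = 1,
\]
so coassociativity at $x$ is exactly the statement that this single $(4,0)$-component of $\ps$ has modulus one.

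For the forward implication, if $M$ is coassociative then every $\ph(e_i,e_j,e_k)$ with $1 \le i,j,k \le 4$ vanishes, whence the coassociative equality forces $\ps(e_1,e_2,e_3,e_4)^2 = 1$; substituting both facts into the associative identity for each triple leaves $\sum_\alpha \ps(e_i,e_j,e_k,e_\alpha)^2 = 0$, so all mixed components vanish and $\chi$ preserves tangency. For the converse, vanishing of the mixed components reduces the associative identity to $\ps(e_1,e_2,e_3,e_4)^2 + \ph(e_i,e_j,e_k)^2 = 1$ for each triple; hence the four quantities $\ph(e_i,e_j,e_k)^2$ all equal the common value $1 - \ps(e_1,e_2,e_3,e_4)^2$, and feeding this into the coassociative equality gives $\ps(e_1,e_2,e_3,e_4)^2 + 4\big(1 - \ps(e_1,e_2,e_3,e_4)^2\big) = 1$, so $\ps(e_1,e_2,e_3,e_4)^2 = 1$ and therefore every $\ph(e_i,e_j,e_k) = 0$, i.e.\ $M$ is coassociative.

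I expect the only real obstacle to be bookkeeping: confirming that the vector in~\eqref{eq:coassoc-equality}, when $(X,Y,Z,W) = (e_1,e_2,e_3,e_4)$, has squared norm exactly $\sum_{i<j<k} \ph(e_i,e_j,e_k)^2$, and verifying that the $(4,0)$-component $\ps(e_1,e_2,e_3,e_4)$ (up to sign) is the term common to all four instances of the associative identity. Once these are pinned down, both directions follow from the short linear-algebra manipulation above, with no appeal to Hodge duality or to the classical fact that the orthogonal complement of an associative $3$-plane is coassociative.
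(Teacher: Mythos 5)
Your proof is correct and rests on exactly the same two ingredients as the paper's: the associative equality~\eqref{eq:assoc-equality} and the coassociative equality~\eqref{eq:coassoc-equality} applied to an adapted orthonormal frame, with the converse direction ($\lambda^2 + 4(1-\lambda^2) = 1$) computed identically. The only cosmetic difference is in the forward direction, where you expand $|\chi(e_i,e_j,e_k)|^2$ explicitly into tangential and normal components rather than invoking the equality case of Cauchy--Schwarz as the paper does; these are the same argument.
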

\begin{proof}
Suppose that $M$ is coassociative. Let $\{e_0, e_1, e_2, e_3\}$ be an orthonormal basis of $T_x M$, and let $(i,j,k,l)$ be any permutation of $(0,1,2,3)$. Using that $M$ is coassociative, the Cauchy--Schwarz inequality, and~\eqref{eq:assoc-equality}, we have
$$ 1 = | \ps(e_i, e_j, e_k, e_l) | = | \ol{g}( \chi (e_i, e_j, e_k), e_l) | \leq | \chi (e_i, e_j, e_k) | | e_l | \leq | e_i \w e_j \w e_k | = 1 .$$
Thus, equality holds in Cauchy--Schwarz, so $\{ \chi (e_i, e_j, e_k ), e_l \}$ is linearly dependent. We deduce that $\chi (e_i, e_j, e_k) \in T_x M$, and hence $\chi (X, Y, Z)$ is tangent to $M$ whenever $X, Y, Z$ are tangent to $M$.

Conversely, suppose that $\chi(X, Y, Z)$ is tangent to $M$ whenever $X, Y, Z$ are tangent to $M$. Let $\{ e_0, e_1, e_2, e_3 \}$ be an orthonormal basis of $T_x M$. Let $\lambda = \ps(e_0, e_1, e_2, e_3)$. We need to show that $\lambda = \pm 1$. Since $\chi(X, Y, Z) = \ps(X, Y, Z, \cdot)$ is tangent to $M$ and orthogonal to each of $X, Y, Z$, we have
$$ \chi(e_i, e_j, e_k) = \pm \lambda e_l \qquad \text{for $i, j, k, l$ distinct in $\{ 0, 1, 2, 3\}$}. $$
Substituting the above into~\eqref{eq:assoc-equality} gives
$$ \lambda^2 + \ph(e_i, e_j, e_k)^2 = 1 \qquad \text{for $i, j, k$ distinct in $\{ 0, 1, 2, 3\}$}. $$
Substituting $\ph(e_i, e_j, e_k)^2 = 1 - \lambda^2$ into~\eqref{eq:coassoc-equality} and using orthonormality of $\{ e_0, e_1, e_2, e_3\}$ gives
$$ \lambda^2 + 4(1 - \lambda^2) = 1, $$
and hence $\lambda = \pm 1$ as required.
\end{proof}

\textbf{Note.} Throughout the rest of Section~\ref{sec:G2}, the ranges for our indices are $1 \leq i, j, k, l, m, p, q \leq 7$. Moreover, $\{ e_1, \ldots, e_7 \}$ is a local orthonormal frame for $\ol{M}$, so $\ol{g}_{ij} = \delta_{ij}$. Thus all our indices will be subscripts and we sum over repeated indices. In \S\ref{sec:assoc} we have $1 \leq a, b \leq 3$, and in \S\ref{sec:coassoc} we have $4 \leq a, b \leq 7$ as the ranges for the indices which are tangent to the submanifold $M$. We explicitly display the summation symbols and ranges in equations that feature sums over different ranges, in the proofs of Theorems~\ref{thm:assoc-B} and~\ref{thm:coassoc-B}.

We need to make use of various contraction identities for $\G$-structures, whose proofs can be found in~\cite{K-flows}. The identities we need are:
\begin{equation} \label{eq:G2-identities}
\begin{aligned}
\ph_{ijp} \ph_{klp} & = \ol{g}_{ik} \ol{g}_{jl} - \ol{g}_{il} \ol{g}_{jk} - \ps_{ijkl}, \\
\ph_{ipq} \ph_{jpq} & = 6 \ol{g}_{ij}, \\
\ph_{ipq} \ps_{jkpq} & = - 4 \ph_{ijk}, \\
\ph_{mpq} \ps_{jmpq} & = 0, \\
\ps_{ijpq} \ps_{klpq} & = 4 \ol{g}_{ik} \ol{g}_{jl} - 4 \ol{g}_{il} \ol{g}_{jk} - 2 \ps_{ijkl}, \\
\ps_{impq} \ps_{jmpq} & = 24 \ol{g}_{ij}.
\end{aligned}
\end{equation}

\subsubsection{Associative submanifolds of manifolds with $\G$-structure} \label{sec:assoc}

Let $k=3$, so that $M$ is a compact oriented embedded $3$-dimensional submanifold of $\ol{M}$. We consider the functional $\cV$ of~\eqref{eq:functional} in this setting, and consider the following restricted class of variations of the ambient metric $\ol{g}$.

Let $\beta_t$ be a family of compactly supported smooth $2$-forms on $\ol{M}$ with $\beta_{t=0} = 0$. Then $\ph_t = \ph + d \beta_t$ is a one-parameter family of smooth $3$-forms on $\ol{M}$ with $\ph_{t=0} = \ph$. Note that if $d \ph = 0$, then $\ph_t$ lies in the same cohomology class as $\ph$ for all $t$, but we do not assume that $d \ph = 0$.

The space of nondegenerate $3$-forms on $\ol{M}$ is an open subbundle of the bundle $\Omega^3$ of smooth $3$-forms. Hence, for small $|t|$, the $3$-form $\ph_t$ is still nondegenerate. Consider the Riemannian metric $\ol{g}_t$ on $\ol{M}$ where
\begin{equation} \label{eq:assoc-metric}
\text{$\ol{g}_t$ is the Riemannian metric induced by $\ph_t = \ph + d \beta_t$}.
\end{equation}
Write $\dot{\beta}$ for $\rest{\ddt}{t=0} \beta_t$. We therefore have
\begin{equation} \label{eq:assoc-form-time}
\rest{\ddt}{t=0} \ph_t = d \dot{\beta}.
\end{equation}

The precise nonlinear formula for $\ol{g}_t$ in terms of $\ph_t = \ph + d \beta_t$ is quite complicated. For our purposes, all that matters is the first variation $\rest{\ddt}{t=0} \ol{g}_t$. We now discuss how this is determined. Using the initial $\G$-structure $\ph$, the space $\Omega^3$ of smooth $3$-forms on $\ol{M}$ decomposes orthogonally with respect to $\ol{g}$ into
$$ \Omega^3 = \Omega^3_1 \oplus \Omega^3_{27} \oplus \Omega^3_7.$$
A $3$-form $\eta$ on $\ol{M}$ therefore decomposes as $\eta = \eta_{1+27} + \eta_7$, where~\cite{K-flows, K-intro} we have
\begin{equation} \label{eq:G2-3-form-decomp}
\eta_{1+27} = \frac{1}{2} h_{ij} e_i \w (e_j \hk \ph), \qquad \eta_7 = \frac{1}{2} X \hk \ps,
\end{equation}
for some unique vector field $X$ and unique symmetric $2$-tensor $h$ on $\ol{M}$. (The factor of $\frac{1}{2}$ is chosen in~\eqref{eq:G2-3-form-decomp} to eliminate a factor of $2$ in~\eqref{eq:assoc-metric-time} below.) Note that we can decompose $h = \frac{1}{7} (\tr_{\ol{g}} h) \ol{g} + h^0$ into its pure-trace and trace-free parts, which are orthogonal with respect to $\ol{g}$, and which correspond to the $\Omega^3_1$ and $\Omega^3_{27}$ components of $\eta$, respectively. But for the consideration of the first variation of the metric, it is natural to combine these two components as we have in~\eqref{eq:G2-3-form-decomp} above. It is shown in~\cite{Bryant, K-flows} that
\begin{equation} \label{eq:assoc-metric-time}
\rest{\ddt}{t=0} \ph_t = \eta \quad \implies \quad \rest{\ddt}{t=0} \ol{g}_t = h,
\end{equation}
where $h$ is related to $\eta$ by~\eqref{eq:G2-3-form-decomp}.

\begin{lemma} \label{lemma:h-from-eta}
Let $\eta$, $X$, and $h$ be as in~\eqref{eq:G2-3-form-decomp}. Define the $2$-tensor $\wh{\eta}$ on $\ol{M}$ by $\wh{\eta}_{pq} = \eta_{pij} \ph_{qij}$. Then we have
\begin{equation} \label{eq:h-from-eta}
h_{il} = \frac{1}{4} ( \wh{\eta}_{il} + \wh{\eta}_{li} ) - \frac{1}{18} (\tr_{\ol{g}} \wh{\eta}) \ol{g}_{il}.
\end{equation}
\end{lemma}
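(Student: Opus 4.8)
The plan is to compute $\wh{\eta}_{pq} = \eta_{pij}\ph_{qij}$ explicitly in terms of the symmetric $2$-tensor $h$ and the vector field $X$ appearing in the decomposition~\eqref{eq:G2-3-form-decomp}, and then to algebraically invert the resulting linear relation to solve for $h$. First I would write out the components of $\eta = \eta_{1+27} + \eta_7$. Using $(e_j \hk \ph)_{bc} = \ph_{jbc}$ together with the wedge product convention, the component form of~\eqref{eq:G2-3-form-decomp} reads
\begin{equation*}
\eta_{abc} = \tfrac{1}{2}\big( h_{al}\ph_{lbc} + h_{bl}\ph_{lca} + h_{cl}\ph_{lab} \big) + \tfrac{1}{2} X_m \ps_{mabc}.
\end{equation*}
I would then substitute this into $\wh{\eta}_{pq} = \eta_{pij}\ph_{qij}$ and treat the two pieces separately.

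For the $\eta_{1+27}$ piece, contracting each of the three terms against $\ph_{qij}$ produces three contributions that I would evaluate using the identities in~\eqref{eq:G2-identities}. The first term yields $3 h_{pq}$ directly from $\ph_{lij}\ph_{qij} = 6\ol{g}_{lq}$. The second and third terms each require the first identity $\ph_{ijp}\ph_{klp} = \ol{g}_{ik}\ol{g}_{jl} - \ol{g}_{il}\ol{g}_{jk} - \ps_{ijkl}$; after matching indices (with attention to the sign changes incurred by reordering the slots of $\ph$), each produces a multiple of $-h_{pq}$, a multiple of $(\tr_{\ol{g}} h)\,\ol{g}_{pq}$, and a term of the form $h_{il}\ps_{lpqi}$. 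The crucial observation is that this last $\ps$-term vanishes identically: relabelling the summed indices and using that $h$ is symmetric while $\ps_{lpqi}$ is odd under the transposition of its first and last slots forces $h_{il}\ps_{lpqi} = -h_{il}\ps_{lpqi}$. Collecting the surviving pieces gives that the $\eta_{1+27}$ contribution to $\wh{\eta}_{pq}$ equals $2 h_{pq} + (\tr_{\ol{g}} h)\,\ol{g}_{pq}$.

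For the $\eta_7$ piece, contracting $\tfrac{1}{2} X_m \ps_{mpij}$ against $\ph_{qij}$ and applying the third identity $\ph_{ipq}\ps_{jkpq} = -4\ph_{ijk}$ gives a term proportional to $X_m \ph_{qmp}$, which is \emph{antisymmetric} in $p$ and $q$. Combining both pieces, I obtain
\begin{equation*}
\wh{\eta}_{pq} = 2 h_{pq} + (\tr_{\ol{g}} h)\,\ol{g}_{pq} - 2 X_m \ph_{qmp}.
\end{equation*}
To invert, I would first symmetrize in $p, q$: the $\eta_7$ term drops out, leaving $\tfrac{1}{2}(\wh{\eta}_{pq} + \wh{\eta}_{qp}) = 2 h_{pq} + (\tr_{\ol{g}} h)\,\ol{g}_{pq}$. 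Taking the trace, and using $\ph_{pmp} = 0$, gives $\tr_{\ol{g}}\wh{\eta} = 9\,\tr_{\ol{g}} h$, so $\tr_{\ol{g}} h = \tfrac{1}{9}\tr_{\ol{g}}\wh{\eta}$. Substituting this back and solving for $h_{pq}$ yields exactly~\eqref{eq:h-from-eta}. I expect the main obstacle to be purely bookkeeping: correctly tracking the signs produced when reordering the indices of $\ph$ to fit the contraction identities, and recognizing the symmetry cancellation of the $\ps$-term---this last point is what makes the $\Omega^3_7$ component $X$ decouple cleanly from the symmetric part and lets the inversion close.
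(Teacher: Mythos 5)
Your proposal is correct and follows essentially the same route as the paper: write the component formula for $\eta$ in terms of $h$ and $X$, contract against $\ph_{qij}$ using the identities in~\eqref{eq:G2-identities} (with the $\ps$-term killed by the symmetry of $h$), then symmetrize and trace to invert, obtaining $\tr_{\ol{g}}\wh{\eta} = 9\tr_{\ol{g}} h$ and hence~\eqref{eq:h-from-eta}. The only cosmetic difference is that you make the vanishing of the $h$-against-$\ps$ contraction explicit, whereas the paper absorbs it into the phrase ``using the symmetry of $h$.''
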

\begin{proof}
From~\eqref{eq:G2-3-form-decomp}, we have
\begin{equation} \label{eq:eta-temp}
2 \eta_{ijk} = h_{ip} \ph_{pjk} + h_{jp} \ph_{ipk} + h_{kp} \ph_{ijp} + X_p \ps_{pijk}.
\end{equation}
Using~\eqref{eq:eta-temp}, the symmetry of $h$, and the identities in~\eqref{eq:G2-identities}, we compute
\begin{align*}
2 \wh{\eta}_{il} & = (h_{ip} \ph_{pjk} + h_{jp} \ph_{ipk} + h_{kp} \ph_{ijp} + X_p \ps_{pijk}) \ph_{ljk} \\
& = h_{ip} (6 \ol{g}_{pl}) + 2 h_{jp} (\ol{g}_{il} \ol{g}_{pj} - \ol{g}_{ij} \ol{g}_{pl} - \ps_{iplj}) + X_p (-4 \ph_{lpi}) \\
& = 4 h_{il} + 2 (\tr_{\ol{g}} h) \ol{g}_{il} - 4 X_p \ph_{pil}. 
\end{align*}
We deduce that $2 \tr_{\ol{g}} \wh{\eta} = 18 \tr_{\ol{g}} h$ and $2 \wh{\eta}_{il} + 2 \wh{\eta}_{li} = 8 h_{il} + 4 (\tr_{\ol{g}} h) \ol{g}_{il}$. Equation~\eqref{eq:h-from-eta} follows.
\end{proof}

\begin{thm}[Theorem A for associative submanifolds] \label{thm:assoc-A}
Let $M$ be an associative submanifold of $\ol{M}$. For small $|t|$, let $\ol{g}_t$ be a family of metrics on $\ol{M}$ induced by $\ph_t = \ph + d \beta_t$ with $\beta_{t = 0} = 0$. Then $\rest{\ddt}{t=0} \cV(\ol{g}_t) = 0$, so $\ol{g}$ is a critical point of $\cV$ with respect to variations in this class.
\end{thm}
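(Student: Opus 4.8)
The plan is to combine the first variation formula of Lemma~\ref{lemma:first-variation} with Stokes's theorem, in the same spirit as the $k=1$ case of Theorem~\ref{thm:Um-A}. Write $\eta = d\dot\beta$ for the first variation of $\ph_t$ as in~\eqref{eq:assoc-form-time}, and let $h = \rest{\ddt}{t=0}\ol{g}_t$ be the induced first variation of the metric, which is expressed in terms of $\eta$ by Lemma~\ref{lemma:h-from-eta}. By Lemma~\ref{lemma:first-variation} it suffices to show that $\int_M (\tr_g h)\,\vol_{(M,g)} = 0$. The crucial observation is purely dimensional: $\dot\beta$ is a $2$-form on $\ol{M}$, so $\eta = d\dot\beta$ is a $3$-form, and $M$ is $3$-dimensional; hence $\iota^*\eta = \iota^* d\dot\beta = d(\iota^* \dot\beta)$ is an exact top-degree form on $M$, whose integral vanishes by Stokes's theorem. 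The whole proof therefore reduces to identifying $\tr_g h$ along $M$ with a constant multiple of the coefficient that expresses $\iota^*\eta$ as a multiple of $\vol_{(M,g)}$.

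Concretely, since $M$ is associative I would choose the adapted oriented orthonormal frame $\{e_1,\ldots,e_7\}$ with $\{e_1,e_2,e_3\}$ tangent to $M$ and $e_1 \times e_2 = e_3$, so that $\rest{\ph}{M} = e^1 \w e^2 \w e^3 = \vol_{(M,g)}$ and $\ph_{123} = 1$. In this frame $\iota^*\eta = \eta_{123}\,\vol_{(M,g)}$, and the task is to show that $\tr_g h = 2\eta_{123}$ along $M$. Using Lemma~\ref{lemma:h-from-eta} together with $\sum_{a=1}^3 \ol{g}_{aa} = 3$, the tangential trace is
\[
\tr_g h = \sum_{a=1}^3 h_{aa} = \tfrac{1}{2} \sum_{a=1}^3 \wh{\eta}_{aa} - \tfrac{1}{6} \tr_{\ol{g}} \wh{\eta}, \qquad \wh{\eta}_{pq} = \eta_{pij} \ph_{qij}.
\]
I would evaluate the two contractions by splitting the inner indices $i,j$ into tangential and normal ranges. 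The structural input from associativity is that, for a tangent index $a$, the components $\ph_{aij}$ are nonzero only when $i,j$ are both tangent — in which case $\ph_{abc}$ is the $3$-dimensional Levi--Civita symbol, since $\rest{\ph}{M}$ is the volume form — or both normal; the mixed components $\ph_{ab\nu}$ with $b$ tangent and $\nu$ normal vanish because the cross product of two tangent vectors is again tangent to the associative submanifold. Carrying this out, the purely tangential part of $\sum_{a=1}^3 \wh{\eta}_{aa}$ contributes $6\eta_{123}$, while the remaining ``one tangent, two normal'' contributions are cancelled exactly by the corresponding terms in $\tfrac{1}{6}\tr_{\ol{g}}\wh{\eta}$, leaving $\tr_g h = 2\eta_{123}$.

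Assembling the pieces yields
\[
\rest{\ddt}{t=0} \cV(\ol{g}_t) = \frac{1}{2} \int_M (\tr_g h)\,\vol_{(M,g)} = \int_M \eta_{123}\,\vol_{(M,g)} = \int_M \iota^*\eta = \int_M d(\iota^* \dot\beta) = 0,
\]
which proves the theorem. The main obstacle is the bookkeeping in the trace computation of the previous paragraph: one must verify that every contribution involving normal components of $\eta$ cancels, so that only the purely tangential coefficient $\eta_{123}$ survives. I expect this to fall out cleanly once the frame is chosen as above; alternatively, the cancellation can be checked invariantly from the contraction identities~\eqref{eq:G2-identities} rather than from a coordinate representative of $\ph$. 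Either way, this is the one step that genuinely uses the associativity hypothesis, and it is the $\G$-analogue of the computation~\eqref{eq:Um-A-g}--\eqref{eq:Um-A-tempc} in the $\U{m}$ case.
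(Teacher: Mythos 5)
Your proposal is correct and follows essentially the same route as the paper's proof: reduce via Lemma~\ref{lemma:first-variation} and Lemma~\ref{lemma:h-from-eta} to showing $\tr_g h = 2\eta_{123}$ in a $\G$-adapted frame, then apply Stokes's theorem to $\iota^*(d\dot\beta)$. The only cosmetic difference is that you organize the trace computation via the tangential/normal index split and the vanishing of the mixed components $\ph_{ab\nu}$ (you should also record that the purely normal components $\ph_{\mu\nu\rho}$ vanish, since the normal space of an associative plane is coassociative --- otherwise the claimed cancellation against $\tfrac{1}{6}\tr_{\ol{g}}\wh{\eta}$ would not close), whereas the paper expands $\ph$ explicitly via~\eqref{eq:assoc-A-phi}; both yield the same result.
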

\begin{proof}
Since $M$ is associative, we can choose a local adapted frame $\{ e_1, \ldots, e_7 \}$ for $(\ol{M}, \ol{g})$ so that, along $M$, the first $3$ elements are an oriented local orthonormal frame for $(M, g)$ and the last $4$ elements are an oriented local orthonormal frame for the normal bundle of $M$ in $\ol{M}$ with respect to $\ol{g}$, and moreover that the frame is $\G$-adapted in the sense that
$$ e_3 = e_1 \times e_2, \quad e_5 = e_1 \times e_4, \quad e_6 = e_2 \times e_4, \quad e_7 = (e_1 \times e_2) \times e_4.$$
With respect to such a frame, along $M$ we have
\begin{equation} \label{eq:assoc-A-phi}
\begin{aligned}
\ph & = e_1 \w e_2 \w e_3 + e_1 \w (e_4 \w e_5 - e_6 \w e_7) \\ & \qquad {} + e_2 \w (e_4 \w e_6 - e_7 \w e_5) + e_3 \w (e_4 \w e_7 - e_5 \w e_6).
\end{aligned}
\end{equation}
Write $\eta$ for $\rest{\ddt}{t=0} \ph_t = d \dot{\beta}$ and $h$ for $\rest{\ddt}{t=0} \ol{g}_t$. Then $h$ is given in terms of $\eta$ by~\eqref{eq:h-from-eta}, and we need to compute the integrand $(\tr_{g} h)  \vol_{(M, g)} = g^{ab} h_{ab}  \vol_{(M, g)}$ of~\eqref{eq:first-variation} (at $t=0$). With $\wh{\eta}$ defined as in Lemma~\ref{lemma:h-from-eta}, we compute with respect to this local frame that
\begin{equation} \label{eq:trace-hat-eta}
\tr_{\ol{g}} \wh{\eta} = \eta_{ijk} \ph_{ijk} = 6 (\eta_{123} + \eta_{145} - \eta_{167} + \eta_{246} - \eta_{275} + \eta_{347} - \eta_{356}),
\end{equation}
where the factor of $6 = 3!$ arises because we are summing over all $1 \leq i, j, k \leq 7$. Similarly, with respect to this local frame, we need to compute
\begin{equation} \label{eq:hat-eta-temp}
\wh{\eta}_{aa} = \eta_{aij} \ph_{aij} = \eta_{1ij} \ph_{1ij} + \eta_{2ij} \ph_{2ij} + \eta_{3ij} \ph_{3ij}.
\end{equation}
For example, using~\eqref{eq:assoc-A-phi} the term $\eta_{1ij} \ph_{1ij}$ is $2 ( \eta_{123} + \eta_{145} - \eta_{167} )$. Combining the terms of~\eqref{eq:hat-eta-temp} and using~\eqref{eq:trace-hat-eta} we obtain
\begin{align*}
\wh{\eta}_{aa} & = 2 ( 3 \eta_{123} + \eta_{145} - \eta_{167} + \eta_{246} - \eta_{275} + \eta_{347} - \eta_{356} ) \\
& = 4 \eta_{123} + 2 (\eta_{123} + \eta_{145} - \eta_{167} + \eta_{246} - \eta_{275} + \eta_{347} - \eta_{356}) \\
& = 4 \eta_{123} + \frac{2}{6} \tr_{\ol{g}} \wh{\eta}.
\end{align*}
Using~\eqref{eq:h-from-eta} and the above, we compute
$$ h_{aa} = \frac{1}{2} \wh{\eta}_{aa} - \frac{3}{18} \tr_{\ol{g}} \wh \eta = \frac{1}{2} (4 \eta_{123} + \frac{2}{6} \tr_{\ol{g}} \wh{\eta}) - \frac{1}{6} \tr_{\ol{g}} \wh \eta = 2 \eta_{123}.$$
Hence, we have $(\tr_{g} h)  \vol_{(M, g)} = h_{aa} e_1 \w e_2 \w e_3 = 2 \eta_{123} e_1 \w e_2 \w e_3 = 2 \rest{\eta}{M}$. Recalling that $\eta = d \dot{\beta}$, by Stokes's Theorem and~\eqref{eq:first-variation}, we deduce that
$$ \rest{\ddt}{t=0} \cV(\ol{g}_t) = \int_M \rest{(d \dot{\beta})}{M} = 0, $$
completing the proof.
\end{proof}

\begin{thm}[Theorem B for associative submanifolds] \label{thm:assoc-B}
Suppose that $\ol{g}$ is a critical point of the functional $\cV$ with respect to variations $\ol{g}_t$ induced by $\ph_t = \ph + d \beta_t$. Then the submanifold $M$ is associative.
\end{thm}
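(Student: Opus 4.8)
The plan is to imitate the proof of Theorem~\ref{thm:Um-B}: test the critical-point hypothesis against a single, carefully chosen variation and read off that an ``associative defect'' must vanish pointwise on $M$. Given a compactly supported smooth $2$-form $\dot\beta$, set $\beta_t = t \dot\beta$ and let $\ol g_t$ be the induced metrics. Since $\rest{\ddt}{t=0}\ph_t = d\dot\beta =: \eta$ by~\eqref{eq:assoc-form-time}, equations~\eqref{eq:assoc-metric-time} and~\eqref{eq:first-variation} turn the hypothesis into
\begin{equation*}
0 = \rest{\ddt}{t=0}\cV(\ol g_t) = \frac{1}{2}\int_M (\tr_g h)\,\vol_{(M,g)},
\end{equation*}
where $h = \rest{\ddt}{t=0}\ol g_t$ is recovered from $\eta$ through Lemma~\ref{lemma:h-from-eta}. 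The aim is to choose $\dot\beta$ so that, along $M$, the integrand becomes a nonpositive multiple of the defect. I expect the clean target to be
\begin{equation*}
\tr_g h = -\sum_{a,b=1}^3 \big|(e_a \times e_b)^{\perp}\big|^2 = -6\,|\chi(e_1,e_2,e_3)|^2,
\end{equation*}
the exact analogue of the identity $\tr_g h = -\sum_a |(Je_a)^{\perp}|^2$ from Theorem~\ref{thm:Um-B}; the second equality uses the associative equality~\eqref{eq:assoc-equality}. Since this quantity vanishes exactly when every $e_a \times e_b$ is tangent to $M$, nonnegativity of the integrand will force $M$ to be associative.

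The construction of $\dot\beta$ is where the real work lies, and I expect it to be the main obstacle. The naive analogue of the $\U{m}$ test variation, namely $\dot\beta = \ol\nabla F \hk \ph$ with $F$ as in Definition~\ref{defn:F}, does \emph{not} work. Indeed, using~\eqref{eq:Hessian-Fb} (so that along $M$ the Hessian $H$ is orthogonal projection onto the normal bundle) and discarding the $\lf$ terms of Remark~\ref{rmk:lf}, one finds that $\eta_{ijk} = (d\dot\beta)_{ijk}$ equals $\ph_{ijk}$ weighted by the number of normal indices among $i,j,k$; feeding this through~\eqref{eq:h-from-eta} and contracting with the identities~\eqref{eq:G2-identities}, the tangential and full traces cancel and $\tr_g h \equiv 0$, regardless of whether $M$ is associative. (This is presumably the failed first attempt alluded to in Remark~\ref{rmk:G2-attempts}.) To break the cancellation I would instead apply the Hessian to $\ol\nabla F \hk \ph$ as a derivation, i.e.\ take
\begin{equation*}
\dot\beta_{jk} = \ol\nabla_p F\,\big(H_{lj}\ph_{plk} - H_{lk}\ph_{plj}\big).
\end{equation*}
This $2$-form still vanishes on $M$ (so the Hessian structure is preserved in $d\dot\beta$), but now produces, along $M$, the form $\ph_{ijk}$ weighted by $n(n-1)$, where $n$ is the number of normal indices. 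The point is that any such reweighting that kills the all-tangent component and leaves the weights suitably unbalanced will do; this particular choice yields the cleanest constant.

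With $\dot\beta$ fixed, the remaining steps are bookkeeping. First I would compute $\eta = d\dot\beta$ along $M$ using~\eqref{eq:Hessian-Fb}, noting that the third-derivative terms arising when $d$ differentiates $H$ all carry a factor of $\ol\nabla F$ and hence are of type $\lf$, so they vanish on $M$. Next I would form $\wh\eta_{pq} = \eta_{pij}\ph_{qij}$, apply~\eqref{eq:h-from-eta} to obtain $h$, and extract $\tr_g h = \sum_{a=1}^3 h_{aa}$, splitting every contraction into tangential indices $1 \le a,b \le 3$ and normal indices $4,\dots,7$ and repeatedly invoking~\eqref{eq:G2-identities}. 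I anticipate this collapses to $\tr_g h = -6\,|\chi(e_1,e_2,e_3)|^2 = -\sum_{a,b=1}^3 |(e_a\times e_b)^{\perp}|^2$. The integrand being nonpositive, the critical-point hypothesis forces it to vanish identically on $M$, so $(e_a\times e_b)^{\perp} = 0$ for all tangent $e_a, e_b$; hence $M$ is associative, up to orientation as in Remark~\ref{rmk:up-to-orientation}. As noted in Remark~\ref{rmk:B-stronger}, combining this with Theorem~\ref{thm:assoc-A} then also gives vanishing first variation in all directions of the class.
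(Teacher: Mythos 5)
Your proposal is correct, and it reaches the conclusion by a genuinely different test variation than the paper. The paper does not use a single universal $2$-form: it takes an arbitrary tangent vector field $V$ on $M$, extends it with compact support to $\ol{M}$, and tests against $\dot\beta = V \w (V \times \ol{\nabla} F)$, obtaining $\tr_g h = -\sum_{a=1}^3 |(V \times e_a)^{\perp}|^2$ and then letting $V$ run over a tangent frame. Your choice $\dot\beta_{jk} = \ol{\nabla}_p F\,(H_{lj}\ph_{plk} - H_{lk}\ph_{plj})$ requires no auxiliary vector field and no extension of tangent fields off $M$; I have checked that it does what you claim. Along $M$ one gets $\eta_{ijk} = 2\big(\ph(e_i^{\perp},e_j^{\perp},e_k) + \ph(e_j^{\perp},e_k^{\perp},e_i) + \ph(e_k^{\perp},e_i^{\perp},e_j)\big)$, which in an adapted frame is indeed $n(n-1)\ph_{ijk}$ with $n$ the number of normal indices, and feeding this through Lemma~\ref{lemma:h-from-eta} gives $\tr_g h = -\sum_{i,j,k \text{ normal}} \ph_{ijk}^2$; a short computation with the identities~\eqref{eq:G2-identities} (or directly the observation that this quantity vanishes iff $\rest{\ph}{N_xM} = 0$, i.e.\ iff $N_xM$ is coassociative, i.e.\ iff $T_xM$ is associative) confirms this equals your predicted $-6|\chi(e_1,e_2,e_3)|^2 = -\sum_{a,b=1}^3 |(e_a\times e_b)^{\perp}|^2$, so the vanishing of the integrand forces $M$ to be associative. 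Your diagnosis of why the naive $\dot\beta = \ol{\nabla}F \hk \ph$ fails (weights $n$ rather than $n(n-1)$, leading to exact cancellation of the two traces) is also correct. What the paper's choice buys is uniformity: the template $V \w (V\times\ol{\nabla}F)$ upgrades verbatim to $V\w W\w\chi(V,W,\ol{\nabla}F)$ and $V\w W\w P(V,W,\ol{\nabla}F)$ in the coassociative and Cayley cases, whereas your construction is tailored to the $3$-form setting; what yours buys is a single test form and a one-shot identification of the defect as $|\chi|^2$. Two small housekeeping points: your $\dot\beta$ is supported wherever $F$ is, so to stay in the class of compactly supported variations you should replace $F$ by $\zeta F$ for a cutoff $\zeta$ equal to $1$ near $M$ (this changes nothing along $M$); and when $d$ hits $H$ or $\ph$ you correctly note the resulting third-derivative and torsion terms are all $\lf$ and so vanish on $M$.
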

\begin{proof}
Given any compactly supported smooth $2$-form $\dot\beta$, let $\ol{g}_t$ be the variation obtained as above by taking $\beta_t = t \dot \beta$ for $|t|$ small, and write $\eta$ for $d\dot\beta = \rest{\ddt}{t=0} \ph_t$. Using~\eqref{eq:first-variation}, our hypothesis then gives that
\begin{equation} \label{eq:assoc-B-hypothesis}
\rest{\ddt}{t=0} \cV (\ol{g}_t) = \frac{1}{2} \int_M (\tr_g h) \, \vol_{(M, g)} = 0,
\end{equation}
where by~\eqref{eq:assoc-metric-time} and Lemma~\ref{lemma:h-from-eta}, the symmetric 2-tensor $h$ is given by
\begin{equation} \label{eq:assoc-B-start}
h_{il} = \frac{1}{4}( \wh{\eta}_{il} + \wh{\eta}_{li} ) - \frac{1}{18} (\tr_{\ol{g}} \wh{\eta}) \ol{g}_{il}, \qquad \text{where} \qquad \wh{\eta}_{il} = \eta_{imn} \ph_{lmn}.
\end{equation}
Taking the trace of $h$ in the tangential directions gives
\begin{align}
\tr_g h = \sum_{a=1}^3 h_{aa} & = \frac{1}{2} \sum_{a=1}^3 \wh{\eta}_{aa} - \frac{1}{6} \sum_{i=1}^7 \wh{\eta}_{ii} \nonumber \\
& = \frac{1}{2} \sum_{a=1}^3 \eta_{amn} \ph_{mna} - \frac{1}{6} \sum_{i=1}^7 \eta_{imn} \ph_{mni} \nonumber \\
& = \frac{1}{2} \eta((e_m \times e_n)^T, e_m, e_n) - \frac{1}{6} \eta( e_m \times e_n, e_m, e_n ) \nonumber \\
& = \frac{1}{6} \eta \big( 2 (e_m \times e_n)^T - (e_m \times e_n)^{\perp}, e_m, e_n \big). \label{eq:assoc-h-trace}
\end{align}

We need to make an appropriate choice of $\dot \beta$. Let $V$ be any smooth tangent vector field on $M$ and extend it smoothly to a global vector field with compact support, still denoted by $V$, on the ambient space $\ol{M}$. We choose
\begin{equation} \label{eq:assoc-test-variation}
\dot \beta = V \w (V \times \ol{\nabla} F)
\end{equation}
where $F$ is as in Definition~\ref{defn:F}. In terms of a local frame, we have $(V \times \ol{\nabla} F)_j = V_p (\ol{\nabla}_q F) \ph_{pqj}$, and thus using the skew-symmetry of $\ph(X, Y, Z) = \langle X \times Y, Z \rangle$, we have
\begin{align} \nonumber
\dot \beta_{ij} & = V_i \langle V \times \ol{\nabla} F, e_j \rangle - V_j \langle V \times \ol{\nabla} F,  e_i \rangle \\
& = V_j \langle V \times e_i, \ol{\nabla} F \rangle - V_i \langle V \times e_j, \ol{\nabla} F \rangle. \label{eq:beta-coords}
\end{align}
Recall from Lemma~\ref{lemma:distance-function} that, \emph{along the submanifold $M$}, we have $\ol{\nabla} F = 0$ and $\ol{\nabla}_X (\ol{\nabla} F) = X^{\perp}$. Using these facts and equation~\eqref{eq:beta-coords}, along $M$ we have
\begin{align}
(d \dot \beta)_{ijk} & = \ol{\nabla}_i \dot \beta_{jk} + \ol{\nabla}_j \dot \beta_{ki} + \ol{\nabla}_k \dot \beta_{ij} \nonumber \\
& = V_k \langle V \times e_j, e_i^{\perp} \rangle - V_j \langle V \times e_k, e_i^{\perp} \rangle \nonumber \\
& \qquad {} + V_i \langle V \times e_k, e_j^{\perp} \rangle - V_k \langle V \times e_i, e_j^{\perp} \rangle \nonumber \\
& \qquad {} + V_j \langle V \times e_i, e_k^{\perp} \rangle - V_i \langle V \times e_j, e_k^{\perp} \rangle. \label{eq:d-beta}
\end{align}

We need to compute the two terms on the right hand side of~\eqref{eq:assoc-h-trace} for $\eta = d \dot \beta$. Recall that $V = V_k e_k$ is tangent along $M$. Substituting $e_i = (e_j \times e_k)^T$ into~\eqref{eq:d-beta} and summing over $j, k = 1, \ldots, 7$, with some obvious manipulation we obtain
\begin{align*}
(d \dot \beta) \big( (e_j \times e_k)^T, e_j, e_k \big) & = 0 - 0 + \langle V, e_j \times e_k \rangle \langle V \times e_k, e_j^{\perp} \rangle - V_k \langle V \times (e_j \times e_k)^T, e_j^{\perp} \rangle \\
& \qquad {} + V_j \langle V \times (e_j \times e_k)^T, e_k^{\perp} \rangle - \langle V, e_j \times e_k \rangle \langle V \times e_j, e_k^{\perp} \rangle \\
& = - \langle V \times e_k, e_j \rangle \langle V \times e_k, e_j^{\perp} \rangle - \langle V \times e_j^{\perp}, (V \times e_j)^T \rangle \\
& \qquad {} - \langle V \times e_k^{\perp}, (V \times e_k)^T \rangle - \langle V \times e_j, (V \times e_j)^{\perp} \rangle \\
& = - 2 \langle (V \times e_k)^{\perp}, (V \times e_k)^{\perp} \rangle - 2 \langle V \times e_k^{\perp}, (V \times e_k)^T \rangle \\
& = - 2 \sum_{k=1}^3 |(V \times e_k)^{\perp}|^2 - 2 \sum_{k=4}^7 |(V \times e_k)^{\perp}|^2 - 2 \sum_{k=4}^7 |(V \times e_k)^T|^2.
\end{align*}
Noting that
$$ \sum_{k=4}^7 |(V \times e_k)^T|^2 = \sum_{a=1}^3 \sum_{k=4}^7 \langle V \times e_k, e_a \rangle^2 = \sum_{a=1}^3 \sum_{k=4}^7 \langle V \times e_a, e_k \rangle^2 = \sum_{a=1}^3 |(V \times e_a)^{\perp}|^2, $$
we thus obtain
\begin{equation} \label{eq:d-beta-T}
(d \dot \beta) \big( (e_j \times e_k)^T, e_j, e_k \big) = -4 \sum_{a=1}^3 |(V \times e_a)^{\perp}|^2 - 2 \sum_{k=4}^7 |(V \times e_k)^{\perp}|^2.
\end{equation}

Next, we substitute $e_i = (e_j \times e_k)^{\perp}$ into~\eqref{eq:d-beta} and sum over $j, k = 1, \ldots, 7$ to obtain
\begin{align*}
(d \dot \beta) \big( (e_j \times e_k)^{\perp}, e_j, e_k \big) & = V_k \langle V \times e_j, (e_j \times e_k)^{\perp} \rangle - V_j \langle V \times e_k, (e_j \times e_k)^{\perp} \rangle + 0 \\
& \qquad {} + V_k \langle V \times e_j^{\perp}, (e_j \times e_k)^{\perp} \rangle - V_j\langle V \times e_k^{\perp}, (e_j \times e_k)^{\perp} \rangle - 0 \\
& = -2 \langle (V \times e_j)^{\perp}, (V \times e_j)^{\perp} \rangle - 2 \langle V \times e_k^{\perp}, (V \times e_k)^{\perp} \rangle \\
& = - 2 \sum_{a=1}^3 |(V \times e_a)^{\perp}|^2 - 2 \sum_{j=4}^7 |(V \times e_j)^{\perp}|^2 - 2 \sum_{k=4}^7 |(V \times e_k)^{\perp}|^2,
\end{align*}
and hence
\begin{equation} \label{eq:d-beta-perp}
(d \dot \beta) \big( (e_j \times e_k)^{\perp}, e_j, e_k \big) = -2 \sum_{a=1}^3 |(V \times e_a)^{\perp}|^2 - 4 \sum_{k=4}^7 |(V \times e_k)^{\perp}|^2.
\end{equation}

Substituting the expressions~\eqref{eq:d-beta-T} and~\eqref{eq:d-beta-perp} into~\eqref{eq:assoc-h-trace} with $\eta = d \dot \beta$, we obtain
$$ \tr_g h = \frac{1}{6}(d \dot \beta) \big( 2 (e_m \times e_n)^T - (e_m \times e_n)^{\perp}, e_m, e_n \big) = - \sum_{a=1}^3 |(V \times e_a)^{\perp}|^2. $$
Hence from the above and~\eqref{eq:assoc-B-hypothesis}, we finally obtain
\begin{equation*}
0 = \rest{\ddt}{t=0} \cV(\ol{g}_t) = -\frac{1}{2} \int_M \left( \sum_{a=1}^{3} \big| (V \times e_a)^{\perp} \big|^2 \right) \vol_{(M, g)}.
\end{equation*}
The integrand must vanish as it is nonnegative. Hence $(V \times e_a)^{\perp} = 0$ for $a = 1, 2, 3$ and \emph{any tangent vector field $V$ on $M$}. Taking $V$ to be $e_1, e_2, e_3$ shows that $e_a \times e_b$ is tangent to $M$ for $1 \leq a, b \leq 3$ and we conclude that $M$ is associative.
\end{proof}

\begin{rmk} \label{rmk:G2-attempts}
The authors experienced several failed attempts to find the correct test variation~\eqref{eq:assoc-test-variation} that succeeds to prove Theorem~\ref{thm:assoc-B}. Comparing with $\dot \alpha = \ol{\nabla} F \hk \omega$ in the almost complex case and recalling equation~\eqref{eq:AZ}, the obvious first guess is to try $\dot \beta = \ol{\nabla} F \hk \ph$, which does not work. Similarly, again comparing with~\eqref{eq:AZ}, we considered taking $d \dot \beta = d d^* (F \ph)$ and possibly demanding that $d \ph = 0$ or even that $\ol{\nabla} \ph = 0$. That did not work either.

Next, the authors reasoned as follows. In moving from $\U{m}$-structures to $\G$-structures, $J$ is replaced by $\times$, and $\omega$ is replaced by $\ph$. Thus everything is shifted up in degree by one, so rather than consider just the single vector field $\ol{\nabla} F$, we decided to introduce a second vector field $V$. Since $\dot \alpha = \ol{\nabla} F \hk \omega = J (\ol{\nabla} F)$, and the cross product is the analogue of $J$, it was thus reasonable to try $\dot \beta = (V \times \ol{\nabla} F) \hk \ph$, which also did not work.

Finally, we considered allowing $\dot \beta$ to be quadratic in $V$ but to retain the desirable property that it be linear in $\ol{\nabla} F$. This led us quickly to $\dot \beta = V \w (V \times \ol{\nabla} F) = V \w (\ol{\nabla} F \hk V \hk \ph)$, which as we saw above does indeed work to prove Theorem~\ref{thm:assoc-B}. It was then clear what the proper generalizations~\eqref{eq:coassoc-test-variation} and~\eqref{eq:Cayley-test-variation} should be in the coassociative and Cayley cases, although there are several added complications in the Cayley case which are discussed in Section~\ref{sec:Spin7}.
\end{rmk}

\subsubsection{Coassociative submanifolds of manifolds with $\G$-structure} \label{sec:coassoc}

Let $k=4$, so that $M$ is a compact oriented embedded $4$-dimensional submanifold of $\ol{M}$. We consider the functional $\cV$ of~\eqref{eq:functional} in this setting, and consider the following restricted class of variations of the ambient metric $\ol{g}$.

Let $\gamma_t$ be a family of compactly supported smooth $3$-forms on $\ol{M}$ with $\gamma_{t=0} = 0$. Then $\ps_t = \ps + d \gamma_t$ is a one-parameter family of smooth $4$-forms on $\ol{M}$ with $\ps_{t=0} = \ps$. Note that if $d \ps = 0$, then $\ps_t$ lies in the same cohomology class as $\ps$ for all $t$, but we do not assume that $d \ps = 0$.

The space of nondegenerate $4$-forms on $\ol{M}$ is an open subbundle of the bundle $\Omega^4$ of smooth $4$-forms. Hence, for small $|t|$, the $4$-form $\ps_t$ is still nondegenerate. Consider the Riemannian metric $\ol{g}_t$ on $\ol{M}$ where
\begin{equation} \label{eq:coassoc-metric}
\text{$\ol{g}_t$ is the Riemannian metric induced by $\ps_t = \ps + d \gamma_t$}.
\end{equation}
(Note that while a nondegenerate $3$-form $\ph$ determines both a metric and an orientation, a nondegenerate $4$-form $\ps$ determines only a metric. Regardless, the choice of orientation on $\ol{M}$ can be taken to be fixed and is irrelevant to us.) Write $\dot{\gamma}$ for $\rest{\ddt}{t=0} \gamma_t$. We therefore have
\begin{equation} \label{eq:coassoc-form-time}
\rest{\ddt}{t=0} \ps_t = d \dot{\gamma}.
\end{equation}

The precise nonlinear formula for $\ol{g}_t$ in terms of $\ps_t = \ps + d \gamma_t$ is quite complicated, but just as in \S\ref{sec:assoc}, all that matters is the first variation $\rest{\ddt}{t=0} \ol{g}_t$. Using the initial $\G$-structure, the space $\Omega^4$ of smooth $4$-forms on $\ol{M}$ decomposes orthogonally with respect to $\ol{g}$ into
$$ \Omega^4 = \Omega^4_1 \oplus \Omega^4_{27} \oplus \Omega^4_7.$$
A $4$-form $\rho$ on $\ol{M}$ therefore decomposes as $\rho = \rho_{1+27} + \rho_7$, where~\cite{K-flows, K-intro} we have
\begin{equation} \label{eq:G2-4-form-decomp}
\rho_{1+27} = \frac{1}{2} h_{ij} e_i \w (e_j \hk \ps), \qquad \rho_7 = \frac{1}{2} X \w \ph,
\end{equation}
for some unique vector field $X$ and unique symmetric $2$-tensor $h$ on $\ol{M}$. (The factor of $\frac{1}{2}$ is chosen in~\eqref{eq:G2-4-form-decomp} to eliminate a factor of $2$ in~\eqref{eq:coassoc-metric-time} below.) It is easy to show, exactly as in the calculation for flows of $\Spin{7}$-structures in~\cite{K-flows-SP}, that
\begin{equation} \label{eq:coassoc-metric-time}
\rest{\ddt}{t=0} \ps_t = \rho \quad \implies \quad \rest{\ddt}{t=0} \ol{g}_t = h,
\end{equation}
where $h$ is related to $\rho$ by~\eqref{eq:G2-4-form-decomp}.

\begin{lemma} \label{lemma:h-from-rho}
Let $\rho$, $X$, and $h$ be as in~\eqref{eq:G2-4-form-decomp}. Define the $2$-tensor $\wh{\rho}$ on $\ol{M}$ by $\wh{\rho}_{pq} = \rho_{pijk} \ps_{qijk}$. Then we have
\begin{equation} \label{eq:h-from-rho}
h_{il} = \frac{1}{12} ( \wh{\rho}_{il} + \wh{\rho}_{li} ) - \frac{1}{48} (\tr_{\ol{g}} \wh{\rho}) \ol{g}_{il}.
\end{equation}
\end{lemma}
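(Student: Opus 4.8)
The plan is to follow the proof of Lemma~\ref{lemma:h-from-eta} essentially verbatim, replacing the $3$-form $\ph$ by the $4$-form $\ps$ and using the corresponding contraction identities from~\eqref{eq:G2-identities}. First I would unwind the decomposition~\eqref{eq:G2-4-form-decomp} into components. Expanding the wedge product $e_m \w (e_n \hk \ps)$ slot by slot and antisymmetrizing, the $\Omega^4_{1+27}$ part contributes one term per index, while $\rho_7 = \tfrac12 X \w \ph$ contributes $(X \w \ph)_{ijkl}$. The upshot is the exact analogue of~\eqref{eq:eta-temp}, namely
\[
2\rho_{ijkl} = h_{ip}\ps_{pjkl} + h_{jp}\ps_{ipkl} + h_{kp}\ps_{ijpl} + h_{lp}\ps_{ijkp} + (X \w \ph)_{ijkl}.
\]

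Next I would contract this with $\ps_{labc}$ to form $\wh\rho_{il} = \rho_{iabc}\ps_{labc}$ and evaluate each resulting contraction using~\eqref{eq:G2-identities}. The first term uses $\ps_{impq}\ps_{jmpq} = 24\,\ol{g}_{ij}$ and yields $24 h_{il}$. The remaining three $h$-terms are all equal after relabelling the contracted indices and using the total antisymmetry of $\ps$; each is evaluated with $\ps_{ijpq}\ps_{klpq} = 4\ol{g}_{ik}\ol{g}_{jl} - 4\ol{g}_{il}\ol{g}_{jk} - 2\ps_{ijkl}$, and the resulting $\ps$-term drops out since it is antisymmetric in the two indices carried by the symmetric tensor $h$; together they contribute $12(\tr_{\ol{g}} h)\,\ol{g}_{il} - 12 h_{il}$. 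The $X \w \ph$ term is handled with the mixed identities $\ph_{mpq}\ps_{jmpq} = 0$ and $\ph_{ipq}\ps_{jkpq} = -4\ph_{ijk}$: expanding $(X \w \ph)_{iabc}$ into its four pieces, the $X_i\ph_{abc}$ piece dies by the first identity while the other three coincide, giving $12 X_p \ph_{ilp}$. Collecting everything yields
\[
2\wh\rho_{il} = 12 h_{il} + 12(\tr_{\ol{g}} h)\,\ol{g}_{il} + 12 X_p \ph_{ilp}.
\]

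Finally, I would separate the symmetric and antisymmetric parts. Since $\ph_{ilp}$ is antisymmetric in $i,l$, symmetrizing gives $\wh\rho_{il} + \wh\rho_{li} = 12 h_{il} + 12(\tr_{\ol{g}} h)\,\ol{g}_{il}$, which eliminates $X$. Taking the trace (and using $\ph_{iip}=0$) gives $\tr_{\ol{g}}\wh\rho = 48\,\tr_{\ol{g}} h$, so $\tr_{\ol{g}} h = \tfrac{1}{48}\tr_{\ol{g}}\wh\rho$. Substituting this back into the symmetrized relation and solving for $h_{il}$ reproduces exactly~\eqref{eq:h-from-rho}.

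I expect the only genuine obstacle to be bookkeeping in the contractions: verifying that the three ``mixed'' $h$-terms really do coincide (which rests on the total antisymmetry of $\ps$ together with the symmetry of $h$) and confirming that the $X$-dependent contribution is purely antisymmetric, so that it is annihilated by the symmetrization. Everything else is a routine application of the identities~\eqref{eq:G2-identities}, paralleling the associative computation in Lemma~\ref{lemma:h-from-eta}.
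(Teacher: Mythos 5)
Your proposal is correct and follows essentially the same route as the paper: expand~\eqref{eq:G2-4-form-decomp} into the componentwise identity for $2\rho_{ijkl}$, contract with $\ps_{mjkl}$ using the identities~\eqref{eq:G2-identities} to get $2\wh\rho_{il} = 12 h_{il} + 12(\tr_{\ol{g}} h)\ol{g}_{il} + 12 X_p\ph_{pil}$, then symmetrize to kill the $X$-term and take the trace to solve for $\tr_{\ol{g}} h = \frac{1}{48}\tr_{\ol{g}}\wh\rho$. All the intermediate coefficients you report match the paper's computation.
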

\begin{proof}
From~\eqref{eq:G2-4-form-decomp}, we have
\begin{equation} \label{eq:rho-temp}
\begin{aligned}
2 \rho_{ijkl} & = h_{ip} \ps_{pjkl} + h_{jp} \ps_{ipkl} + h_{kp} \ps_{ijpl} + h_{lp} \ps_{ijkp} \\
& \qquad {} + X_i \ph_{jkl} - X_j \ph_{ikl} + X_k \ph_{ijl} - X_l \ph_{ijk}.
\end{aligned}
\end{equation}
Using~\eqref{eq:rho-temp}, the symmetry of $h$, and the identities in~\eqref{eq:G2-identities}, we compute
\begin{align*}
2 \wh{\rho}_{im} & = (h_{ip} \ps_{pjkl} + h_{jp} \ps_{ipkl} + h_{kp} \ps_{ijpl} + h_{lp} \ps_{ijkp}) \ps_{mjkl} \\
& \qquad {} + (X_i \ph_{jkl} - X_j \ph_{ikl} + X_k \ph_{ijl} - X_l \ph_{ijk}) \ps_{mjkl} \\
& = h_{ip} (24 \ol{g}_{pm}) + 3 h_{jp} (4 \ol{g}_{im} \ol{g}_{pj} - 4 \ol{g}_{ij} \ol{g}_{pm} - 2 \ps_{ipmj}) + 0  - 3 X_j (-4 \ph_{imj}) \\
& = 12 h_{im} + 12 (\tr_{\ol{g}} h) \ol{g}_{im} + 12 X_p \ph_{pim}. 
\end{align*}
We deduce that $2 \tr_{\ol{g}} \wh{\rho} = 96 \tr_{\ol{g}} h$ and $2 \wh{\rho}_{im} + 2 \wh{\rho}_{mi} = 24 h_{im} + 24 (\tr_{\ol{g}} h) \ol{g}_{im}$. Equation~\eqref{eq:h-from-rho} follows.
\end{proof}

\begin{thm}[Theorem A for coassociative submanifolds] \label{thm:coassoc-A}
Let $M$ be a coassociative submanifold of $\ol{M}$. For small $|t|$, let $\ol{g}_t$ be a family of metrics on $\ol{M}$ induced by $\ps_t = \ps + d \gamma_t$ with $\gamma_{t=0} = 0$. Then $\rest{\ddt}{t=0} \cV(\ol{g}_t) = 0$, so $\ol{g}$ is a critical point of $\cV$ with respect to variations in this class.
\end{thm}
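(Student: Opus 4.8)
The plan is to mirror the proof of Theorem~\ref{thm:assoc-A} for the associative case. By Lemma~\ref{lemma:first-variation} we have $\rest{\ddt}{t=0}\cV(\ol{g}_t) = \tfrac12\int_M (\tr_g h)\,\vol_{(M,g)}$ where $h = \rest{\ddt}{t=0}\ol{g}_t$, so it suffices to show that $(\tr_g h)\,\vol_{(M,g)}$ is a constant multiple of an exact form on $M$; Stokes's theorem then finishes the argument. Since $\rho := \rest{\ddt}{t=0}\ps_t = d\dot{\gamma}$ by~\eqref{eq:coassoc-form-time}, the natural target is to prove $(\tr_g h)\,\vol_{(M,g)} = c\,\rest{\rho}{M}$ for some constant $c$, for then $\rest{\rho}{M} = d(\rest{\dot{\gamma}}{M})$ is exact.

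First I would choose a local $\G$-adapted orthonormal frame $\{e_1,\dots,e_7\}$ along $M$ with $\{e_4,e_5,e_6,e_7\}$ an oriented orthonormal frame for $TM$ and $\{e_1,e_2,e_3\}$ one for the normal bundle (following the index convention of \S\ref{sec:coassoc}), chosen so that $\ph$ and $\ps = \sta \ph$ take their standard coordinate expressions. Because $M$ is coassociative, $\rest{\ph}{M} = 0$ and $\rest{\ps}{M} = \vol_{(M,g)} = e^4 \w e^5 \w e^6 \w e^7$, so that $\ps_{4567} = 1$; in this frame the remaining nonzero components of $\ps$ all have exactly two tangent and two normal indices. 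Consequently $\rest{\rho}{M} = \rho_{4567}\,\vol_{(M,g)}$.

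The computational core is the evaluation of $\tr_g h$. By Lemma~\ref{lemma:h-from-rho},
$$ \tr_g h = \sum_{a=4}^7 h_{aa} = \frac16 \sum_{a=4}^7 \wh{\rho}_{aa} - \frac{1}{12} \tr_{\ol{g}} \wh{\rho}, $$
where $\wh{\rho}_{pq} = \rho_{pijk}\ps_{qijk}$. I would expand $\sum_{a=4}^7 \wh{\rho}_{aa}$ and $\tr_{\ol{g}}\wh{\rho} = \rho_{ijkl}\ps_{ijkl}$ using the explicit form of $\ps$, grouping contributions according to the nonzero $4$-index sets of $\ps$ and tracking how many tangent indices each contains. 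Both quantities split into a term proportional to $\rho_{4567}$ (from the all-tangent set $\{4,5,6,7\}$) plus a term $\Sigma$ built from the mixed components $\rho_{2367}, \rho_{2345}, \dots$. One finds $\sum_{a=4}^7 \wh{\rho}_{aa} = 24\,\rho_{4567} + 12\,\Sigma$ and $\tr_{\ol{g}}\wh{\rho} = 24\,\rho_{4567} + 24\,\Sigma$, so the mixed part cancels:
$$ \tr_g h = \big(4\,\rho_{4567} + 2\,\Sigma\big) - \big(2\,\rho_{4567} + 2\,\Sigma\big) = 2\,\rho_{4567}. $$
Hence $(\tr_g h)\,\vol_{(M,g)} = 2\,\rho_{4567}\,\vol_{(M,g)} = 2\,\rest{\rho}{M} = 2\,d(\rest{\dot{\gamma}}{M})$, and Stokes's theorem gives $\rest{\ddt}{t=0}\cV(\ol{g}_t) = \int_M \rest{(d\dot{\gamma})}{M} = 0$.

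The main obstacle is the contraction bookkeeping in the preceding paragraph: the cancellation of the mixed part $\Sigma$ is the one nontrivial point, and it hinges on the precise coefficients $\tfrac{1}{12}$ and $\tfrac{1}{48}$ in Lemma~\ref{lemma:h-from-rho} together with the fact that every mixed nonzero index set of $\ps$ contains exactly two tangent indices. Once this cancellation is verified the rest is formal, and $\tr_g h = 2\,\rho_{4567}$ is precisely the coassociative analogue of the identity $\tr_g h = 2\,\eta_{123}$ established in the proof of Theorem~\ref{thm:assoc-A}.
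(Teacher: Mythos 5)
Your proposal is correct and follows essentially the same route as the paper's proof: the same $\G$-adapted frame, the same reduction via Lemma~\ref{lemma:h-from-rho} to computing $\tfrac16\sum_{a}\wh{\rho}_{aa}-\tfrac1{12}\tr_{\ol{g}}\wh{\rho}$, and the same cancellation of the mixed components yielding $\tr_g h = 2\rho_{4567}$, so that $(\tr_g h)\,\vol_{(M,g)} = 2\,\rest{(d\dot{\gamma})}{M}$ and Stokes's theorem finishes. Your coefficient bookkeeping ($24\rho_{4567}+12\Sigma$ versus $24\rho_{4567}+24\Sigma$) matches the paper's computation exactly.
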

\begin{proof}
Since $M$ is coassociative, we can choose a local adapted frame $\{ e_1, \ldots, e_7 \}$ for $(\ol{M}, \ol{g})$ so that, along $M$, the last $4$ elements are an oriented local orthonormal frame for $(M, g)$ and the first $3$ elements are an oriented local orthonormal frame for the normal bundle of $M$ in $\ol{M}$ with respect to $\ol{g}$, and moreover that the frame is $\G$-adapted in the sense that
$$ e_3 = e_1 \times e_2, \quad e_5 = e_1 \times e_4, \quad e_6 = e_2 \times e_4, \quad e_7 = (e_1 \times e_2) \times e_4.$$
With respect to such a frame, along $M$ we have
\begin{equation} \label{eq:coassoc-A-psi}
\begin{aligned}
\ps & = e_4 \w e_5 \w e_6 \w e_7 - e_2 \w e_3 \w (e_4 \w e_5 - e_6 \w e_7) \\ & \qquad {} - e_3 \w e_1 \w (e_4 \w e_6 - e_7 \w e_5) - e_1 \w e_2 \w (e_4 \w e_7 - e_5 \w e_6).
\end{aligned}
\end{equation}
(Note that~\eqref{eq:coassoc-A-psi} is just the Hodge star of~\eqref{eq:assoc-A-phi}, since the orthogonal complement of an associative $3$-plane is a coassociative $4$-plane and vice versa.) Write $\rho$ for $\rest{\ddt}{t=0} \ps_t = d \dot{\gamma}$ and $h$ for $\rest{\ddt}{t=0} \ol{g}_t$. Then $h$ is given in terms of $\rho$ by~\eqref{eq:h-from-rho}, and we need to compute the integrand $(\tr_{g} h)  \vol_{(M, g)} = g^{ab} h_{ab}  \vol_{(M, g)}$ of~\eqref{eq:first-variation} (at $t=0$). With $\wh{\rho}$ defined as in Lemma~\ref{lemma:h-from-rho}, we compute with respect to this local frame that
\begin{equation} \label{eq:trace-hat-rho}
\tr_{\ol{g}} \wh{\rho} = \rho_{ijkl} \ps_{ijkl} = 24 (\rho_{4567} - \rho_{2345} + \rho_{2367} - \rho_{3146} + \rho_{3175} - \rho_{1247} + \rho_{1256}),
\end{equation}
where the factor of $24 = 4!$ arises because we are summing over all $1 \leq i, j, k, l \leq 7$. Similarly, with respect to this local frame, we need to compute
\begin{equation} \label{eq:hat-rho-temp}
\wh{\rho}_{aa} = \rho_{aijk} \rho_{aijk} = \rho_{4ijk} \ps_{4ijk} + \rho_{5ijk} \ps_{5ijk} + \rho_{6ijk} \ps_{6ijk} + \rho_{7ijk} \ps_{7ijk}.
\end{equation}
For example, using~\eqref{eq:coassoc-A-psi} the term $\rho_{4ijk} \ps_{4ijk}$ is $6 ( \rho_{4567} - \rho_{2345} - \rho_{3146} - \rho_{1247} )$. Combining the terms of~\eqref{eq:hat-rho-temp} and using~\eqref{eq:trace-hat-rho} we obtain
\begin{align*}
\wh{\rho}_{aa} & = 6 ( 4 \rho_{4567} - 2 \rho_{2345} + 2 \rho_{2367} - 2 \rho_{3146} + 2 \rho_{3175} - 2 \rho_{1247} + 2 \rho_{1256} ) \\
& = 12 \rho_{4567} + 12 (\rho_{4567} - \rho_{2345} + \rho_{2367} - \rho_{3146} + \rho_{3175} - \rho_{1247} + \rho_{1256}) \\
& = 12 \rho_{4567} + \frac{12}{24} \tr_{\ol{g}} \wh{\rho}.
\end{align*}
Using~\eqref{eq:h-from-rho} and the above, we compute
$$ h_{aa} = \frac{1}{6} \wh{\rho}_{aa} - \frac{4}{48} \tr_{\ol{g}} \wh \rho = \frac{1}{6} (12 \rho_{4567} + \frac{12}{24} \tr_{\ol{g}} \wh{\rho}) - \frac{2}{24} \tr_{\ol{g}} \wh \rho = 2 \rho_{4567}.$$
Hence, we have $(\tr_{g} h)  \vol_{(M, g)} = h_{aa} e_4 \w e_5 \w e_6 \w e_7 = 2 \rho_{4567} e_4 \w e_5 \w e_6 \w e_7 = 2 \rest{\rho}{M}$. Recalling that $\rho = d \dot{\gamma}$, by Stokes's Theorem and~\eqref{eq:first-variation}, we deduce that
$$ \rest{\ddt}{t=0} \cV(\ol{g}_t) = \int_M \rest{(d \dot{\gamma})}{M} = 0, $$
completing the proof.
\end{proof}

\begin{thm}[Theorem B for coassociative submanifolds] \label{thm:coassoc-B}
Suppose that $\ol{g}$ is a critical point of the functional $\cV$ with respect to variations $\ol{g}_t$ induced by $\ps_t = \ps + d \gamma_t$. Then the submanifold $M$ is coassociative.
\end{thm}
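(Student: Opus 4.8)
The plan is to imitate the structure of the proof of Theorem~\ref{thm:assoc-B} as closely as possible, with the vector cross product $\times$ replaced by the vector-valued $3$-form $\chi$ defined by $\ps(X,Y,Z,W) = \ol{g}(\chi(X,Y,Z),W)$, and with the final geometric conclusion routed through Proposition~\ref{prop:coassoc-condition} rather than through a direct ``preserves tangent spaces'' statement. First I would take $\gamma_t = t \dot\gamma$ for an arbitrary compactly supported smooth $3$-form $\dot\gamma$ on $\ol{M}$, and set $\rho = d \dot\gamma = \rest{\ddt}{t=0} \ps_t$. By Lemma~\ref{lemma:first-variation} and the criticality hypothesis, $\int_M (\tr_g h)\, \vol_{(M,g)} = 0$, where $h = \rest{\ddt}{t=0}\ol{g}_t$ is given by Lemma~\ref{lemma:h-from-rho} via~\eqref{eq:h-from-rho} with $\wh{\rho}_{pq} = \rho_{pijk}\ps_{qijk}$. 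Tracing $h$ over the tangential directions $4 \leq a \leq 7$ and re-expressing the contractions $\rho_{aijk}\ps_{aijk}$ and $\tr_{\ol{g}}\wh{\rho} = \rho_{ijkl}\ps_{ijkl}$ in terms of $\chi$, I would obtain the coassociative analogue of the associative trace formula~\eqref{eq:assoc-h-trace}: a fixed linear combination of $\rho$ contracted against the tangential and normal parts of $\chi(e_i,e_j,e_k)$. The coefficients here come purely out of the $\G$ contraction identities~\eqref{eq:G2-identities}; I expect the combination to take the schematic form $\tr_g h = c\, \rho\big(\chi(e_i,e_j,e_k)^T - \chi(e_i,e_j,e_k)^{\perp},\, e_i, e_j, e_k\big)$ for some nonzero constant $c$, and I would note that (unlike the associative $2(\cdot)^T - (\cdot)^{\perp}$ pattern) this relative weighting is genuinely different, because $\chi$ is \emph{not} a vector cross product.

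Next I would choose the test variation. Guided by Remark~\ref{rmk:G2-attempts} and the associative choice~\eqref{eq:assoc-test-variation}, namely $\dot\beta = V \w (\ol\nabla F \hk V \hk \ph)$, the natural degree-raised analogue is to fix a smooth tangent vector field $V$ on $M$, extend it to a compactly supported field on $\ol{M}$ still denoted $V$, and take the $3$-form $\dot\gamma = V \w (\ol\nabla F \hk V \hk \ps)$, which is quadratic in $V$ and linear in $\ol\nabla F$. I would then compute $(d\dot\gamma)_{ijkl}$ \emph{along} $M$: expanding the exterior derivative as the alternating sum of four covariant derivatives, every term either is linear in $\ol\nabla F$ and hence vanishes on $M$ by~\eqref{eq:gradient-F} (Remark~\ref{rmk:lf}), or carries exactly one factor $\ol\nabla_X(\ol\nabla F) = X^{\perp}$ converted by~\eqref{eq:Hessian-Fb}. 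This mirrors the passage from~\eqref{eq:beta-coords} to~\eqref{eq:d-beta}, and leaves an expression for $\rho = d\dot\gamma$ along $M$ built from $V$, the normal projections $e_i^{\perp}$, and $\ps$.

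Finally I would substitute this expression for $\rho$ into the trace formula, splitting the computation into the two pieces obtained by inserting $\chi(e_j,e_k,e_l)^T$ and $\chi(e_j,e_k,e_l)^{\perp}$ into the first slot, exactly as~\eqref{eq:d-beta-T} and~\eqref{eq:d-beta-perp} were handled in the associative case. I expect the two pieces to recombine (after using the associative equality~\eqref{eq:assoc-equality} to handle $|\chi|^2$, since $\chi$ is not norm-preserving) into a negative definite expression of the form $\tr_g h = -c' \sum |\chi(V,e_a,e_b)^{\perp}|^2$ with $c' > 0$, summed over tangential frame indices. Nonnegativity of the integrand then forces $\chi(V,e_a,e_b)^{\perp} = 0$ along $M$; ranging $V$ and the frame vectors over the tangent directions shows $\chi(X,Y,Z)$ is tangent to $M$ whenever $X,Y,Z$ are, and Proposition~\ref{prop:coassoc-condition} concludes that $M$ is coassociative (up to orientation, as in Remark~\ref{rmk:up-to-orientation}). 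The main obstacle is twofold: first, identifying the correct test variation $\dot\gamma$ (per Remark~\ref{rmk:G2-attempts}, the analogous guesses in lower degree fail), and second, verifying that after applying~\eqref{eq:G2-identities} and~\eqref{eq:assoc-equality} the contraction of $d\dot\gamma$ against $\chi^T - \chi^{\perp}$ actually collapses to a \emph{definite} sum of squares with the right sign; because the coefficient pattern differs from the associative case and $\chi$ fails to be a cross product, tracking these constants and signs carefully is the delicate part.
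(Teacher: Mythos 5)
Your overall architecture matches the paper's: rescale to $\gamma_t = t\dot\gamma$, apply Lemma~\ref{lemma:first-variation} and Lemma~\ref{lemma:h-from-rho}, trace $h$ tangentially to obtain $\tr_g h = \tfrac{1}{12}\,\rho\big(-\chi(e_m,e_n,e_p)^T + \chi(e_m,e_n,e_p)^{\perp}, e_m,e_n,e_p\big)$ (so your schematic with $c = -\tfrac{1}{12}$, and you are right that the $(1,-1)$ weighting genuinely differs from the associative $(2,-1)$ pattern), choose a test variation linear in $\ol{\nabla}F$, and close with Proposition~\ref{prop:coassoc-condition}. The gap is in the test variation itself and in the endgame. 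The paper's choice is $\dot\gamma = V \w W \w \chi(V,W,\ol{\nabla}F)$, built from \emph{two} auxiliary tangent fields: the correct degree-raised analogue of $\dot\beta = V \w (V \times \ol{\nabla}F)$ contracts the calibration form all the way down to a $1$-form using $\ol{\nabla}F$ together with as many tangent fields as that requires (two, for the $4$-form $\ps$), and then wedges with those same tangent fields. Your $\dot\gamma = V \w (\ol{\nabla}F \hk V \hk \ps)$ contracts $\ps$ only twice, leaving a $2$-form, and uses a single field $V$; it is a structurally different object, not the one the paper identifies in Remark~\ref{rmk:G2-attempts} as the generalization that works.

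This is not merely cosmetic, because your stated endgame does not follow from your choice. You predict $\tr_g h = -c'\sum |\chi(V,e_a,e_b)^{\perp}|^2$ over tangential frame indices and then let $V$ and the frame range over tangent directions to feed Proposition~\ref{prop:coassoc-condition}; but concluding that $\chi$ preserves tangency on \emph{three} independent tangent arguments is exactly what the two free fields $V, W$ plus the frame index deliver in the paper (namely $\chi(V,W,e_a)^{\perp}=0$). A test form with a single free tangent field cannot produce that statement in the form you predict: the surviving quantity is quadratic in $V$, and because $\ol{\nabla}_X(\ol{\nabla}F) = X^{\perp}$ forces the differentiated slot of $\ps$ to be normal, the frame sums close up on contractions of $\ps$ with itself involving normal directions rather than on $|\chi(V,e_a,e_b)^{\perp}|^2$ with tangential $a,b$. (One can ask whether your $\dot\gamma$ nonetheless yields some other sign-definite quantity characterizing coassociativity, e.g.\ via the equivalent condition that the normal $3$-planes be associative; possibly, but you have not verified this, and it would have to bypass Proposition~\ref{prop:coassoc-condition} entirely.) A smaller point: the paper's computation uses only the total skewness of $\chi$ and $\ps$ together with the re-summation identity $\sum_{l} |\chi(V,W,e_l)^T|^2 = \sum_{a}|\chi(V,W,e_a)^{\perp}|^2$; the associative equality~\eqref{eq:assoc-equality} is not needed in the proof of Theorem~\ref{thm:coassoc-B} itself, only in Proposition~\ref{prop:coassoc-condition}.
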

\begin{proof}
Given any compactly supported smooth $3$-form $\dot\gamma$, let $\ol{g}_t$ be the variation obtained as above by taking $\gamma_t = t \dot \gamma$ for $|t|$ small, and write $\rho$ for $d\dot\gamma = \rest{\ddt}{t=0} \ps_t $. Using~\eqref{eq:first-variation}, our hypothesis then gives that
\begin{equation} \label{eq:coassoc-B-hypothesis}
0 = \rest{\ddt}{t=0} \cV (\ol{g}_t) = \frac{1}{2} \int_M (\tr_g h) \, \vol_{(M, g)},
\end{equation}
where by~\eqref{eq:coassoc-metric-time} and Lemma~\ref{lemma:h-from-rho}, the symmetric 2-tensor $h$ is given by
\begin{equation} \label{eq:coassoc-B-start}
h_{il} = \frac{1}{12}( \wh{\rho}_{il} + \wh{\rho}_{li} ) - \frac{1}{48} (\tr_{\ol{g}} \wh{\rho}) \ol{g}_{il}, \qquad \text{where} \qquad \wh{\rho}_{il} = \rho_{imnp} \ps_{lmnp}.
\end{equation}
Taking the trace of $h$ in the tangential directions gives
\begin{align}
\tr_g h = \sum_{a=4}^7 h_{aa} & = \frac{1}{6} \sum_{a=4}^7 \wh{\rho}_{aa} - \frac{1}{12} \sum_{i=1}^7 \wh{\rho}_{ii} \nonumber \\
& = - \frac{1}{6} \sum_{a=4}^7 \rho_{amnp} \ps_{mnpa} + \frac{1}{12} \sum_{i=1}^7 \rho_{imnp} \ps_{mnpi} \nonumber \\
& = - \frac{1}{6} \rho \big( \chi(e_m, e_n, e_p)^T, e_m, e_n, e_p \big) + \frac{1}{12} \rho( \chi(e_m, e_n, e_p), e_m, e_n, e_p ) \nonumber \\
& = \frac{1}{12} \rho \big( - \chi(e_m, e_n, e_p)^T + \chi(e_m, e_n, e_p)^{\perp}, e_m, e_n, e_p \big). \label{eq:coassoc-h-trace}
\end{align}

We need to make an appropriate choice of $\dot \gamma$. Let $V, W$ be any smooth tangent vector fields on $M$ and extend them both smoothly to global vector fields with compact support, still denoted by $V, W$, on the ambient space $\ol{M}$. We choose
\begin{equation} \label{eq:coassoc-test-variation}
\dot \gamma = V \w W \w \chi(V, W, \ol{\nabla} F)
\end{equation}
where $F$ is as in Definition~\ref{defn:F}. In terms of a local frame, we have $\chi(V, W, \ol{\nabla} F)_j = V_p W_q (\ol{\nabla}_m F) \ps_{pqmj}$, and thus using the skew-symmetry of $\ps(X, Y, Z, W) = \langle \chi(X, Y, Z), W \rangle$, we have
\begin{align} \nonumber
\dot \gamma_{jkl} & = (V_j W_k - V_k W_j) \langle \chi(V, W, \ol{\nabla} F), e_l \rangle + (V_k W_l - V_l W_k) \langle \chi(V, W, \ol{\nabla} F), e_j \rangle \\
& \qquad {} + (V_l W_j - V_j W_l) \langle \chi(V, W, \ol{\nabla} F), e_k \rangle. \label{eq:gamma-coords}
\end{align}
Recall from Lemma~\ref{lemma:distance-function} that, \emph{along the submanifold $M$}, we have $\ol{\nabla} F = 0$ and $\ol{\nabla}_X (\ol{\nabla} F) = X^{\perp}$. Using these facts and equation~\eqref{eq:gamma-coords}, along $M$ we have
\begin{align}
\ol{\nabla}_i \dot \gamma_{jkl} & = (V_j W_k - V_k W_j) \langle \chi(V, W, e_i^{\perp}), e_l \rangle \nonumber \\
& \qquad {} + (V_k W_l - V_l W_k) \langle \chi(V, W, e_i^{\perp}), e_j \rangle \nonumber \\
& \qquad {} + (V_l W_j - V_j W_l) \langle \chi(V, W, e_i^{\perp}), e_k \rangle \label{eq:nabla-gamma-ijkl}
\end{align}
and similarly
\begin{align}
\ol{\nabla}_j \dot \gamma_{ikl} & = (V_i W_k - V_k W_i) \langle \chi(V, W, e_j^{\perp}), e_l \rangle \nonumber \\
& \qquad {} + (V_k W_l - V_l W_k) \langle \chi(V, W, e_j^{\perp}), e_i \rangle \nonumber \\
& \qquad {} + (V_l W_i - V_i W_l) \langle \chi(V, W, e_j^{\perp}), e_k \rangle. \label{eq:nabla-gamma-jikl}
\end{align}
Recall that
\begin{equation} \label{eq:d-gamma}
(d \dot \gamma)_{ijkl} = \ol{\nabla}_i \dot \gamma_{jkl} - \ol{\nabla}_j \dot \gamma_{ikl} + \ol{\nabla}_k \dot \gamma_{ijl} - \ol{\nabla}_l \dot \gamma_{ijk}.
\end{equation}

We need to compute the two terms on the right hand side of~\eqref{eq:coassoc-h-trace} for $\rho = d \dot \gamma$. Recall that $V = V_k e_k$ and $W = W_l e_l$ are both tangent along $M$, so $\langle V, X^T \rangle = \langle V, X \rangle$ and $\langle V, X^{\perp} \rangle = 0$ for any vector field $X$, and similarly for $W$.

First we compute $(d \dot \gamma) \big( \chi(e_j, e_k, e_l)^T, e_j, e_k, e_l)$. Because we are summing over $j, k, l$ and since $\dot \gamma$ and $\chi$ are both totally skew, from~\eqref{eq:d-gamma} we have 
\begin{align} \nonumber
(d \dot \gamma) \big( \chi(e_j, e_k, e_l)^T, e_j, e_k, e_l \big) & \phantom{:} = (\ol{\nabla} \dot \gamma) \big( \chi(e_j, e_k, e_l)^T, e_j, e_k, e_l \big) - 3 (\ol{\nabla} \dot \gamma) \big( e_j, \chi(e_j, e_k, e_l)^T, e_k, e_l \big) \\
&  =: (I) - 3 \cdot (II). \label{eq:dgamma-temp}
\end{align}
From~\eqref{eq:nabla-gamma-ijkl} with $e_i = \chi(e_j, e_k, e_l)^T$ we see that $(I) = 0$. As for $(II)$, we use~\eqref{eq:nabla-gamma-jikl} with $e_i = \chi(e_j, e_k, e_l)^T$ and the skew-symmetry of $\ps(X, Y, Z, U) = \langle \chi(X, Y, Z), U \rangle$ to compute
\begin{align*}
(II) & = - \big( \langle V, \chi(e_j, e_k, e_l) \rangle W_k - V_k \langle W, \chi(e_j, e_k, e_l) \rangle \big) \langle \chi(V, W, e_l), e_j^{\perp} \rangle \\
& \qquad {} + (V_k W_l - V_l W_k) \langle \chi(V, W, e_j^{\perp}), \chi(e_j, e_k, e_l)^T \rangle \\
& \qquad {} - (V_l \langle W, \chi(e_j, e_k, e_l) \rangle -  \langle V, \chi(e_j, e_k, e_l) \rangle W_l) \langle \chi(V, W, e_k), e_j^{\perp} \rangle \\
& = - \big( \langle \chi(e_j, W, e_l), V \rangle  - \langle \chi(e_j, V, e_l), W \rangle \big) \langle \chi(V, W, e_l), e_j^{\perp} \rangle \\
& \qquad {} + 2 \langle \chi(V, W, e_j^{\perp}), \chi(e_j, V, W)^T \rangle \\
& \qquad {} - \big( \langle \chi(e_j, e_k, V), W \rangle - \langle \chi(e_j, e_k, W), V \rangle \big) \langle \chi(V, W, e_k), e_j^{\perp} \rangle \\
& = 2 \langle \chi(W, e_l, V), e_j \rangle \langle \chi(V, W, e_l), e_j^{\perp} \rangle + 2 \langle \chi(V, W, e_j^{\perp}), \chi(V, W, e_j)^T \rangle \\
& \qquad {} + 2 \langle \chi(e_k, V, W), e_j \rangle \langle \chi(V, W, e_k), e_j^{\perp} \rangle \\
& = 4 \langle \chi(V, W, e_l)^{\perp}, \chi(V, W, e_l)^{\perp} \rangle + 2 \langle \chi(V, W, e_j^{\perp}), \chi(V, W, e_j)^T \rangle.
\end{align*}
Therefore, substituting the above into~\eqref{eq:dgamma-temp} gives
\begin{align}
& \qquad (d \dot \gamma) \big( \chi(e_j, e_k, e_l)^T, e_j, e_k, e_l \big) \nonumber \\
& = -12 \langle \chi(V, W, e_l)^{\perp}, \chi(V, W, e_l)^{\perp} \rangle - 6 \langle \chi(V, W, e_l^{\perp}), \chi(V, W, e_l)^T \rangle. \label{eq:d-gamma-ijkl-h}
\end{align}

Next we compute $(d \dot \gamma) \big( \chi(e_j, e_k, e_l)^{\perp}, e_j, e_k, e_l \big)$. As before we have 
\begin{align} \nonumber
(d \dot \gamma) \big( \chi(e_j, e_k, e_l)^{\perp}, e_j, e_k, e_l \big) & \phantom{:} = (\ol{\nabla} \dot \gamma) \big( \chi(e_j, e_k, e_l)^{\perp}, e_j, e_k, e_l \big) - 3 (\ol{\nabla} \dot \gamma) \big( e_j, \chi(e_j, e_k, e_l)^{\perp}, e_k, e_l \big) \\
& : = (I)' - 3 \cdot (II)'. \label{eq:dgamma-temp2}
\end{align}
For $(I)'$, we have by~\eqref{eq:nabla-gamma-ijkl} with $e_i = \chi(e_j, e_k, e_l)^{\perp}$ that
\begin{align*}
(I)' & = - (V_j W_k - V_k W_j) \langle \chi(V, W, e_l), \chi(e_j, e_k, e_l)^{\perp} \rangle \\
& \qquad {} - (V_k W_l - V_l W_k) \langle \chi(V, W, e_j), \chi(e_j, e_k, e_l)^{\perp} \rangle \\
& \qquad {} - (V_l W_j - V_j W_l) \langle \chi(V, W, e_k), \chi(e_j, e_k, e_l)^{\perp} \rangle \\
& = - 6 \langle \chi(V, W, e_l)^{\perp}, \chi(V, W, e_l)^{\perp} \rangle.
\end{align*}
For $(II)'$, note that in~\eqref{eq:nabla-gamma-jikl}, with $e_i = \chi(e_j, e_k, e_l)^{\perp}$, the first and third lines on the right-hand side vanish, since $V$ and $W$ are tangent to $M$. Thus we have
\begin{align*}
(II)' & = (V_k W_l - V_l W_k) \langle \chi(V, W, e_j^{\perp}), \chi(e_j, e_k, e_l)^{\perp} \rangle \\
& = 2 \langle \chi(V, W, e_j^{\perp}), \chi(V, W, e_j)^{\perp} \rangle.
\end{align*}
Therefore, substituting the above into~\eqref{eq:dgamma-temp2} gives
\begin{align}
& \qquad (d \dot \gamma) \big( \chi(e_j, e_k, e_l)^{\perp}, e_j, e_k, e_l \big) \nonumber \\
& = - 6 \langle \chi(V, W, e_l)^{\perp}, \chi(V, W, e_l)^{\perp} \rangle - 6 \langle \chi(V, W, e_l^{\perp}), \chi(V, W, e_l)^{\perp} \rangle. \label{eq:d-gamma-jikl-h}
\end{align}

Now we substitute~\eqref{eq:d-gamma-ijkl-h} and~\eqref{eq:d-gamma-jikl-h} into the right hand side of~\eqref{eq:coassoc-h-trace} for $\rho = d \dot \gamma$, to obtain 
\begin{align*}
\tr_g h & = \frac{1}{2} \langle \chi(V, W, e_l)^{\perp}, \chi(V, W, e_l)^{\perp} \rangle + \frac{1}{2} \langle \chi(V, W, e_l^{\perp}), \chi(V, W, e_l)^T \rangle \\
& \qquad {} - \frac{1}{2} \langle \chi(V, W, e_l^{\perp}), \chi(V, W, e_l)^{\perp} \rangle.
\end{align*}
Because $\{ e_1, e_2, e_3 \}$ are normal and $\{ e_4, e_5, e_6, e_7 \}$ are tangent, the above becomes
\begin{align*}
\tr_g h & = \frac{1}{2} \sum_{k=1}^3 | \chi(V, W, e_k)^{\perp} |^2 + \frac{1}{2} \sum_{a=4}^7 | \chi(V, W, e_a)^{\perp} |^2 \\
& \qquad {} + \frac{1}{2} \sum_{l=1}^3 \langle \chi(V, W, e_l), \chi(V, W, e_l)^T \rangle - \frac{1}{2} \sum_{l=1}^3 | \chi(V, W, e_l)^{\perp} |^2 \\
& = \frac{1}{2} \sum_{a=4}^7 | \chi(V, W, e_a)^{\perp} |^2 + \frac{1}{2} \sum_{l=1}^3 | \chi(V, W, e_l)^T |^2.
\end{align*}
Noting that
$$ \sum_{l=1}^3 | \chi(V, W, e_l)^T |^2 = \sum_{l=1}^3 \sum_{a=4}^7 \langle \chi(V, W, e_l), e_a \rangle^2 = \sum_{a=4}^7 \sum_{l=1}^3 \langle \chi(V, W, e_a), e_l \rangle^2 = \sum_{a=4}^7| \chi(V, W, e_a)^{\perp} |^2, $$
we deduce that
$$ \tr_g h = \sum_{a=4}^7 | \chi(V, W, e_a)^{\perp} |^2. $$
Hence from the above and~\eqref{eq:coassoc-B-hypothesis}, we finally obtain
\begin{equation*}
0 = \rest{\ddt}{t=0} \cV(\ol{g}_t) = \frac{1}{2} \int_M \left( \sum_{a=4}^7 | \chi(V, W, e_a)^{\perp} |^2 \right) \vol_{(M, g)}.
\end{equation*}
The integrand must vanish as it is nonnegative. Hence $\chi(V, W, e_a)^{\perp} = 0$ for $a = 4, 5, 6, 7$ and \emph{any tangent vector fields $V, W$ on $M$}. Taking $V, W$ to be $e_4, e_5, e_6, e_7$ shows that $\chi(e_a, e_b, e_c)$ is tangent to $M$ for $4 \leq a, b, c \leq 7$ and hence by Proposition~\ref{prop:coassoc-condition} we conclude that $M$ is coassociative.
\end{proof}

\subsection{Cayley submanifolds of manifolds with $\Spin{7}$-structure} \label{sec:Spin7}

In this section we consider the case when $\ol{M}$ has a $\Spin{7}$-structure. Suppose that $n=8$ and let $(\Ph, \ol{g})$ be a $\Spin{7}$-structure on $\ol{M}$. Here $\Ph$ is a nondegenerate $4$-form which determines the Riemannian metric $\ol{g}$ in a nonlinear way. It also induces an orientation with respect to which $\Ph$ is self-dual, meaning $\sta \Ph = \Ph$. See~\cites{K1, K-flows-SP} for more about $\Spin{7}$-structures. We do not assume anything about the \emph{torsion} $\ol{\nabla} \Ph$ of the $\Spin{7}$-structure.

There is a $3$-fold vector cross product $P$ on $\ol{M}$ defined by
$$ \Ph(X, Y, Z, W) = \ol{g}( P(X, Y, Z), W ).$$
That is, $P$ is obtained from $\Ph$ by raising an index using $\ol{g}$. It is clear that $P(X, Y, Z)$ is orthogonal to each of $X, Y, Z$. Moreover, $| P(X,Y,Z) |^2 = | X \w Y \w Z|^2$.

Recall from Definition~\ref{defn:structures} that a $4$-dimensional orientable submanifold $M$ of $(\ol{M}, \ol{g}, \Ph)$ is \emph{Cayley} if its tangent spaces $T_x M$ are preserved by the cross product --- that is, if $P(X, Y, Z)$ is tangent to $M$ whenever $X, Y, Z$ are all tangent to $M$. This is equivalent to the existence of an orientation on $M$ with respect to which $\rest{\Ph}{M} = \vol_M$. To see this, let $X, Y, Z, W$ be orthonormal tangent vectors to $M$ and observe by Cauchy--Schwarz that
$$ | \Ph(X, Y, Z, W) | = | \ol{g} (P(X, Y, Z), W) | \leq | P(X, Y, Z) | \, |W| = | X \w Y \w Z | \, | W | = 1 = | \vol_M (X, Y, Z, W) |, $$
with equality if and only if $P(X, Y, Z) = \pm W$. Note that when $d \Ph = 0$, a Cayley submanifold is \emph{calibrated} by $\Ph$ and hence minimal~\cite{HL}, but we do not assume $d \Ph = 0$.

\textbf{Note.} In this section, our index ranges are $1 \leq i, j, k, l, m, p, q \leq 8$. Moreover, $\{ e_1, \ldots, e_8 \}$ is a local orthonormal frame for $\ol{M}$, so $\ol{g}_{ij} = \delta_{ij}$. Thus all our indices will be subscripts and we sum over repeated indices. We have $1 \leq a, b \leq 4$ as the ranges for the indices tangent to the submanifold $M$. We explicitly display the summation symbols and ranges in equations that feature sums over different ranges, in the proof of Theorem~\ref{thm:Cayley-B}.

We need to make use of various contraction identities for $\Spin{7}$-structures, which can be found in~\cite{K-flows-SP}. The identities we need are:
\begin{equation} \label{eq:SP-identities}
\begin{aligned}
\Ph_{ijpq} \Ph_{klpq} & = 6 \ol{g}_{ik} \ol{g}_{jl} - 6 \ol{g}_{il} \ol{g}_{jk} - 4 \Ph_{ijkl}, \\
\Ph_{impq} \Ph_{jmpq} & = 42 \ol{g}_{ij}.
\end{aligned}
\end{equation}

Let $k=4$, so that $M$ is a compact oriented embedded $4$-dimensional submanifold of $\ol{M}$. We consider the functional $\cV$ of~\eqref{eq:functional} in this setting. We will show that in this case, the correct restricted class of variations of the ambient metric $\ol{g}$ that makes Theorems A and B true is \emph{somewhat different}, in two ways, from the other three cases.

One reason for the increased complexity in the $\Spin{7}$ case is the following. In contrast to the $\G$ case, the space $\Omega^4_+$ of nondegenerate $4$-forms on a manifold that admits $\Spin{7}$-structures is \emph{not} open, but rather it is nonlinearly embedded in the space $\Omega^4$ of smooth $4$-forms on $\ol{M}$. In fact, using the initial $\Spin{7}$-structure $\Ph$, the space $\Omega^4$ of smooth $4$-forms on $\ol{M}$ decomposes orthogonally~\cite{K-flows-SP} with respect to $\ol{g}$ into
$$ \Omega^4 = \Omega^4_1 \oplus \Omega^4_{35} \oplus \Omega^4_7 \oplus \Omega^4_{27}.$$
Here $\Omega^4_{35}$ is the space of \emph{anti-self-dual} $4$-forms, and $\Omega^4_{1+7+27}$ is the space of \emph{self-dual} $4$-forms. It is shown in~\cite{K-flows-SP} that $\Omega^4_{1+35+7}$ is precisely the tangent space $T_{\Ph} \Omega^4_+$ at $\Ph$ to the space $\Omega^4_+$ of nondegenerate $4$-forms. That is, $\sigma \in \Omega^4_{1+35+7}$ if and only if there exists a one-parameter family of $\Spin{7}$-structures $\Ph_t$ on $\ol{M}$ with $\Ph_0 = \Ph$ and
$$ \sigma = \rest{\ddt}{t=0} \Ph_t. $$

\begin{rmk} \label{rmk:Um-not-open}
In the $\U{m}$ case, it is also true that the space $\Omega^{(1,1)}_+$ of positive $(1,1)$-forms is not open in $\Omega^2$, but it \emph{is open} in $\Omega^{(1,1)}$. Since we \emph{did not vary} $J$, and took $\omega_t = \wt \omega_t^{(1,1)}$, the $\U{m}$ case is actually similar to the $\G$ cases.
\end{rmk}

The $\Omega^4_1$ directions in $T_{\Ph} \Omega^4_+$ are precisely the directions which scale the metric, while the $\Omega^4_{35}$ directions correspond to metric variations which infinitesimally preserve the volume, and the $\Omega^4_7$ directions correspond to variations of the $\Spin{7}$-structure which infinitesimally do not change the metric at all.

We show in Theorems~\ref{thm:Cayley-A} and~\ref{thm:Cayley-B} below that in order to prove versions of Theorems A and B in this case, we must restrict to variations $\Ph_t$ (with $\Ph_0 = \Ph$) of the special form
$$ \rest{\ddt}{t=0} \Ph_t = \pi_{35+7} d \dot{\gamma} $$
for some $3$-form $\dot{\gamma}$ on $\ol{M}$. That is, these must be variations in the direction of the orthogonal projection of an \emph{exact} $4$-form onto the \emph{infinitesimally volume preserving directions}. (See also Remark~\ref{rmk:Cayley-differences}.)

Note that it \emph{may not} be possible to choose $\Ph_t$ such that $\Ph_t - \Ph$ is exact, as we were able to do in both the $\U{m}$ and $\G$ cases. Regardless, since we do not assume that $d \Ph = 0$, there is not necessarily any well-defined cohomology class to preserve anyway.

Consider the Riemannian metric $\ol{g}_t$ on $\ol{M}$ where
\begin{equation} \label{eq:SP-metric}
\text{$\ol{g}_t$ is the Riemannian metric induced by $\Ph_t$ where $\rest{\ddt}{t=0} \Ph_t = \pi_{35+7} d \dot{\gamma}$}.
\end{equation}

As mentioned above, a general $4$-form $\sigma$ on $\ol{M}$ decomposes as $\sigma = \sigma_{1+35} + \sigma_7 + \sigma_{27}$, where~\cite{K-flows-SP} we have
\begin{equation} \label{eq:SP-4-form-decomp}
\sigma_{1+35} = \frac{1}{2} h_{ij} e_i \w (e_j \hk \Ph), \qquad \sigma_7 = \frac{1}{2} \beta_{ij} e_i \w (e_j \hk \Ph),
\end{equation}
for some unique symmetric $2$-tensor $h$ on $\ol{M}$ and some unique $2$-form $\beta$ of type $\Omega^2_7$. (The factor of $\frac{1}{2}$ is chosen in~\eqref{eq:SP-4-form-decomp} to eliminate a factor of $2$ in~\eqref{eq:Cayley-metric-time} below.) Moreover, it is also shown in~\cite{K-flows-SP} that
\begin{equation} \label{eq:SP-27}
\sigma \in \Omega^4_{27} \, \iff \, \sigma_{ijkl} \Ph_{mjkl} = 0.
\end{equation}
Note that we can decompose $h = \frac{1}{8} (\tr_{\ol{g}} h) \ol{g} + h^0$ into its pure-trace and trace-free parts, which are orthogonal with respect to $\ol{g}$, and which correspond to the $\Omega^4_1$ and $\Omega^4_{35}$ components of $\sigma$, respectively. But for the consideration of the first variation of the metric, it is natural to combine these two components as we have in~\eqref{eq:SP-4-form-decomp} above. It is shown in~\cite{K-flows-SP} that
\begin{equation} \label{eq:Cayley-metric-time}
\rest{\ddt}{t=0} \Ph_t = \sigma \in \Omega^4_{1+35+7} \quad \implies \quad \rest{\ddt}{t=0} \ol{g}_t = h,
\end{equation}
where $h$ is related to $\sigma$ by~\eqref{eq:SP-4-form-decomp}.

\begin{lemma} \label{lemma:h-from-sigma}
Let $\sigma \in \Omega^4$ and $h$ be as in~\eqref{eq:SP-4-form-decomp}. Define the $2$-tensor $\wh{\sigma}$ on $\ol{M}$ by $\wh{\sigma}_{pq} = \sigma_{pijk} \Ph_{qijk}$. Then we have
\begin{equation} \label{eq:h-from-sigma}
h_{im} = \frac{1}{24} ( \wh{\sigma}_{im} + \wh{\sigma}_{mi} ) - \frac{1}{112} (\tr_{\ol{g}} \wh{\sigma}) \ol{g}_{im}.
\end{equation}
Moreover, the trace-free part $h^0$ of $h$, which gives $\sigma_{35} = \frac{1}{2} h^0_{ij} e_i \w (e_j \hk \Ph)$, is given by
\begin{equation} \label{eq:h0-from-sigma}
h^0_{im} = \frac{1}{24} ( \wh{\sigma}_{im} + \wh{\sigma}_{mi} ) - \frac{1}{96} (\tr_{\ol{g}} \wh{\sigma}) \ol{g}_{im}.
\end{equation}
\end{lemma}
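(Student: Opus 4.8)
The plan is to follow exactly the template of Lemmas~\ref{lemma:h-from-eta} and~\ref{lemma:h-from-rho}: produce a coordinate expression for $2\sigma_{ijkl}$, contract it against $\Ph_{mjkl}$ to compute $\wh{\sigma}_{im}$ using the identities~\eqref{eq:SP-identities}, then symmetrize and trace to solve a linear system for $h$. First I would decompose $\sigma = \sigma_{1+35} + \sigma_7 + \sigma_{27}$ and record, from~\eqref{eq:SP-4-form-decomp}, the components
\begin{equation*}
2(\sigma_{1+35})_{ijkl} = h_{ip}\Ph_{pjkl} - h_{jp}\Ph_{pikl} + h_{kp}\Ph_{pijl} - h_{lp}\Ph_{pijk},
\end{equation*}
together with the structurally identical expression for $2(\sigma_7)_{ijkl}$ with $\beta$ in place of $h$. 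The crucial simplification is that the $\Omega^4_{27}$ part is annihilated by the $\Ph$-contraction by~\eqref{eq:SP-27}, so $\wh{\sigma}_{im} = (\sigma_{1+35})_{ijkl}\Ph_{mjkl} + (\sigma_7)_{ijkl}\Ph_{mjkl}$ and the $27$-part never intervenes.

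Next I would compute the symmetric ($h$) contribution. Contracting $2(\sigma_{1+35})_{ijkl}$ with $\Ph_{mjkl}$, the first term gives $h_{ip}\Ph_{pjkl}\Ph_{mjkl} = 42\,h_{im}$ by the second identity in~\eqref{eq:SP-identities}, while each of the remaining three terms, after rearranging indices into the pattern of the first identity, yields $-6h_{im} + 6(\tr_{\ol{g}} h)\ol{g}_{im}$. Here the residual contractions of the form $h_{jp}\Ph_{pimj}$ vanish, because the symmetric part of $\Ph_{pimj}$ in the index pair $(j,p)$ is zero (those indices sit in transposed slots of the totally skew $\Ph$). Summing gives
\begin{equation*}
2\,(\sigma_{1+35})_{ijkl}\Ph_{mjkl} = 42\,h_{im} + 3\big(-6h_{im} + 6(\tr_{\ol{g}} h)\ol{g}_{im}\big) = 24\,h_{im} + 18(\tr_{\ol{g}} h)\ol{g}_{im},
\end{equation*}
which is manifestly symmetric in $(i,m)$.

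The step I expect to be the main obstacle, and the only genuinely new feature compared with the $\G$ cases, is the $\sigma_7$ term: unlike the $\Omega^4_7$ ``$X$-terms'' in Lemmas~\ref{lemma:h-from-eta} and~\ref{lemma:h-from-rho}, which were visibly antisymmetric, here one must check that the $\beta$-contributions reassemble into a multiple of $\beta_{im}$. Running the same contraction with $\beta$ antisymmetric, the $\ol{g}$-trace terms drop out since $\tr_{\ol{g}}\beta = 0$, but the $\Ph$-contractions of the type $\beta_{jp}\Ph_{pimj}$ now survive. These are handled using that $\beta \in \Omega^2_7$ satisfies $\beta_{kl}\Ph_{klij} = -6\beta_{ij}$ (equivalently, one invokes $\Spin{7}$-equivariance of $\sigma \mapsto \wh{\sigma}$ and Schur's lemma to see that the $\Omega^4_7$ summand maps into $\Omega^2_7$), yielding $\beta_{jp}\Ph_{pimj} = 6\beta_{im}$ and hence a contribution proportional to $\beta_{im}$. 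For the proof only the antisymmetry matters, so I would emphasize that the entire $\sigma_7$ contribution is antisymmetric in $(i,m)$.

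Finally I would symmetrize to kill that antisymmetric part, obtaining $\wh{\sigma}_{im} + \wh{\sigma}_{mi} = 24\,h_{im} + 18(\tr_{\ol{g}} h)\ol{g}_{im}$, and take the trace to get $\tr_{\ol{g}}\wh{\sigma} = 84\,\tr_{\ol{g}} h$. Substituting $\tr_{\ol{g}} h = \tfrac{1}{84}\tr_{\ol{g}}\wh{\sigma}$ back and solving the $2\times 2$ system gives
\begin{equation*}
h_{im} = \tfrac{1}{24}(\wh{\sigma}_{im} + \wh{\sigma}_{mi}) - \tfrac{1}{112}(\tr_{\ol{g}}\wh{\sigma})\ol{g}_{im},
\end{equation*}
which is~\eqref{eq:h-from-sigma}. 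For~\eqref{eq:h0-from-sigma} I would subtract the pure-trace part $\tfrac18(\tr_{\ol{g}} h)\ol{g}_{im} = \tfrac{1}{672}(\tr_{\ol{g}}\wh{\sigma})\ol{g}_{im}$; combining the coefficients $\tfrac{1}{112} + \tfrac{1}{672} = \tfrac{1}{96}$ yields the trace-free formula $h^0_{im} = \tfrac{1}{24}(\wh{\sigma}_{im} + \wh{\sigma}_{mi}) - \tfrac{1}{96}(\tr_{\ol{g}}\wh{\sigma})\ol{g}_{im}$, completing the proof.
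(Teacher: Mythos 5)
Your proposal is correct and takes essentially the same route as the paper's proof: the paper contracts once with $A = h + \beta$ and lets symmetrization kill the skew and $\Ph$-contraction terms, while you treat $\sigma_{1+35}$ and $\sigma_7$ separately, but the identities from~\eqref{eq:SP-identities}, the use of~\eqref{eq:SP-27}, the trace $\tr_{\ol{g}} \wh{\sigma} = 84 \tr_{\ol{g}} h$, and all final coefficients agree. (Your aside that $\beta_{kl}\Ph_{klij} = -6\beta_{ij}$ is unnecessary and its sign is convention-dependent, but as you correctly note, only the antisymmetry of the $\sigma_7$ contribution in $(i,m)$ is needed.)
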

\begin{proof}
From~\eqref{eq:SP-4-form-decomp}, we have
\begin{equation} \label{eq:sigma-temp}
2 \sigma_{ijkl} = A_{ip} \Ph_{pjkl} + A_{jp} \Ph_{ipkl} + A_{kp} \Ph_{ijpl} + A_{lp} \Ph_{ijkp} + 2 (\sigma_{27})_{ijkl}
\end{equation}
where $A_{pq} = h_{pq} + \beta_{pq}$ in terms of symmetric and skew-symmetric parts, respectively. From~\eqref{eq:SP-27} we know that $(\sigma_{27})_{ijkl} \Ph_{mjkl} = 0$. Thus, using~\eqref{eq:sigma-temp}, and the identities in~\eqref{eq:SP-identities}, we compute
\begin{align*}
2 \wh{\sigma}_{im} & = (A_{ip} \Ph_{pjkl} + A_{jp} \Ph_{ipkl} + A_{kp} \Ph_{ijpl} + A_{lp} \Ph_{ijkp}) \Ph_{mjkl} + 0 \\
& = A_{ip} (42 \ol{g}_{pm}) + 3 A_{jp} (6 \ol{g}_{im} \ol{g}_{pj} - 6 \ol{g}_{ij} \ol{g}_{pm} - 4 \Ph_{ipmj}) \\
& = 24 A_{im} + 18 (\tr_{\ol{g}} A) \ol{g}_{im} - 12 A_{jp} \Ph_{jpim}.
\end{align*}
We deduce that $2 \tr_{\ol{g}} \wh{\sigma} = 168 \tr_{\ol{g}} h$ and $2 \wh{\sigma}_{im} + 2 \wh{\sigma}_{mi} = 48 h_{im} + 36 (\tr_{\ol{g}} h) \ol{g}_{im}$. Equation~\eqref{eq:h-from-sigma} follows.

Then we compute
\begin{align*}
h^0_{im} & = h_{im} - \frac{1}{8} (\tr_{\ol{g}} h) \ol{g}_{im} = h_{im} - \frac{1}{8 \cdot 84} (\tr_{\ol{g}} \wh{\sigma}) \ol{g}_{im} \\
& = \frac{1}{24} ( \wh{\sigma}_{im} + \wh{\sigma}_{mi} ) - \Big( \frac{1}{112} + \frac{1}{8 \cdot 84} \Big) (\tr_{\ol{g}} \wh{\sigma}) \ol{g}_{im}
\end{align*}
which simplifies to~\eqref{eq:h0-from-sigma}.
\end{proof}

\begin{thm}[Theorem A for Cayley submanifolds] \label{thm:Cayley-A}
Let $M$ be a Cayley submanifold of $\ol{M}$. For small $|t|$, let $\ol{g}_t$ be a family of metrics on $\ol{M}$ induced by $\Ph_t$ where $\rest{\ddt}{t=0} \Ph_t = \pi_{35+7} d \dot{\gamma}$ and $\Ph_{0} = \Ph$. Suppose also that the following \emph{additional condition} holds:
\begin{equation} \label{eq:Cayley-condition}
\int_M \rest{(\sta d \dot \gamma)}{M} = 0. \quad \text{(Note that this condition \emph{depends} on the submanifold $M$.)}
\end{equation}
Then $\rest{\ddt}{t=0} \cV(\ol{g}_t) = 0$, so $\ol{g}$ is a critical point of $\cV$ with respect to variations in this class.
\end{thm}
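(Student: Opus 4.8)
The plan is to run the same template as the associative and coassociative cases (Theorems~\ref{thm:assoc-A} and~\ref{thm:coassoc-A}): feed the metric variation into the first variation formula of Lemma~\ref{lemma:first-variation} and then integrate, but tracking carefully \emph{two} surviving components of $d\dot\gamma$ rather than one. Writing $h = \rest{\ddt}{t=0}\ol{g}_t$, Lemma~\ref{lemma:first-variation} gives
\[
\rest{\ddt}{t=0}\cV(\ol{g}_t) = \frac{1}{2}\int_M (\tr_g h)\,\vol_{(M,g)},
\]
so everything comes down to computing $\tr_g h$. The key structural observation is that $\sigma = \pi_{35+7}d\dot\gamma$ has \emph{no} $\Omega^4_1$ component, so the induced $h$ is \emph{trace-free}: it is precisely the tensor $h^0$ of~\eqref{eq:h0-from-sigma}. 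Moreover, since~\eqref{eq:h0-from-sigma} already subtracts off the full trace, the $\Omega^4_1$ and $\Omega^4_{27}$ parts of the exact form $\mu := d\dot\gamma$ do not affect $h^0$, so I may apply~\eqref{eq:h0-from-sigma} directly with $\wh{\mu}_{pq} = \mu_{pijk}\Ph_{qijk}$ in place of $\wh{\sigma}$.

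Next I would pick a $\Spin{7}$-adapted orthonormal frame $\{e_1,\dots,e_8\}$ with $\{e_1,\dots,e_4\}$ tangent to the Cayley submanifold $M$, so that along $M$ we have $\rest{\Ph}{M} = e_1\w e_2\w e_3\w e_4$ and, by self-duality $\sta\Ph=\Ph$, the form $\Ph$ splits into the tangent volume $e_{1234}$, the normal volume $e_{5678}$, and a sum of ``mixed'' monomials each carrying exactly two tangent and two normal indices. A short computation from~\eqref{eq:h0-from-sigma}, using $\tr_{\ol{g}}\wh{\mu} = \sum_{a=1}^4\wh{\mu}_{aa} + \sum_{a=5}^8\wh{\mu}_{aa}$, gives
\[
\tr_g h = \sum_{a=1}^4 h^0_{aa} = \frac{1}{12}\sum_{a=1}^4\wh{\mu}_{aa} - \frac{1}{24}\tr_{\ol{g}}\wh{\mu} = \frac{1}{24}\Big( \sum_{a=1}^4\wh{\mu}_{aa} - \sum_{a=5}^8\wh{\mu}_{aa} \Big).
\]
Setting $T = \sum_{a=1}^4\wh{\mu}_{aa}$ and $N = \sum_{a=5}^8\wh{\mu}_{aa}$, I would evaluate these partial traces against the decomposition of $\Ph$. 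The all-tangent block of $\Ph$ contributes $24\,\mu_{1234}$ to $T$ (via $\Ph_{1234}=1$), the all-normal block contributes $24\,\mu_{5678}$ to $N$, and --- this is the crucial point --- each mixed monomial $\Ph_{abcd}$ contributes \emph{the same} amount to $T$ and to $N$, since it is seen symmetrically whether the repeated index runs over the two tangent or the two normal slots it occupies. Hence the mixed contributions cancel in $T-N$, leaving the clean identity $\tr_g h = \mu_{1234} - \mu_{5678}$.

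Finally I would translate the two surviving components back into forms on $M$. Since $\vol_{(M,g)} = e_1\w\cdots\w e_4$ we have $\mu_{1234}\,\vol_{(M,g)} = \rest{(d\dot\gamma)}{M}$, while self-duality of the Hodge star on $4$-forms in dimension $8$ gives $\mu_{5678} = (\sta d\dot\gamma)_{1234}$, so $\mu_{5678}\,\vol_{(M,g)} = \rest{(\sta d\dot\gamma)}{M}$. Therefore
\[
\rest{\ddt}{t=0}\cV(\ol{g}_t) = \frac{1}{2}\int_M \rest{(d\dot\gamma)}{M} - \frac{1}{2}\int_M \rest{(\sta d\dot\gamma)}{M}.
\]
The first integral vanishes by Stokes's Theorem, since $\rest{(d\dot\gamma)}{M} = d(\rest{\dot\gamma}{M})$ and $M$ is closed; the second vanishes precisely by the additional hypothesis~\eqref{eq:Cayley-condition}, completing the proof.

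The main obstacle --- and the reason the Cayley case is genuinely different from the others --- is the emergence of the normal-volume term $\mu_{5678} = (\sta d\dot\gamma)_{1234}$, which is \emph{not} exact and so cannot be killed by Stokes's Theorem alone; this is exactly what forces the submanifold-dependent condition~\eqref{eq:Cayley-condition}. The delicate technical step is verifying the symmetric cancellation of the mixed terms in $T-N$, together with the realization that it is the passage to the \emph{trace-free} tensor $h^0$ (equivalently, the removal of the $\Omega^4_1$ scaling direction effected by $\pi_{35+7}$) that makes the coefficients of $T$ and $N$ equal and opposite: had one retained the full trace, the mixed contributions would survive with unequal weights and the argument would break down.
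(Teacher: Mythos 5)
Your proof is correct and follows essentially the same route as the paper's: an adapted frame, the partial traces of $\wh{\sigma}_{aa} = \sigma_{aijk}\Ph_{aijk}$ against the frame expression of $\Ph$, the identification $2\pi_{35} d\dot\gamma = d\dot\gamma - \sta d\dot\gamma$, and Stokes's Theorem combined with the extra hypothesis~\eqref{eq:Cayley-condition}. The only (harmless) difference is organizational: you pass to the trace-free tensor $h^0$ of~\eqref{eq:h0-from-sigma} at the outset and cancel the mixed terms directly in $T-N$, whereas the paper computes the full $h$ of~\eqref{eq:h-from-sigma} and then decomposes $\sigma$ into its $\Omega^4_1$, $\Omega^4_{35}$, and $\Omega^4_7$ components to arrive at the same identity $\tr_g h = (d\dot\gamma)_{1234} - (\sta d\dot\gamma)_{1234}$.
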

\begin{proof}
Since $M$ is Cayley, we can choose a local adapted frame $\{ e_1, \ldots, e_8 \}$ for $(\ol{M}, \ol{g})$ so that, along $M$, the first $4$ elements are an oriented local orthonormal frame for $(M, g)$ and the last $4$ elements are an oriented local orthonormal frame for the normal bundle of $M$ in $\ol{M}$ with respect to $\ol{g}$, and moreover that the frame is $\Spin{7}$-adapted. With respect to such a frame, along $M$ we have
\begin{equation} \label{eq:Cayley-A-Phi}
\begin{aligned}
\Ph & = e_1 \w e_2 \w e_3 \w e_4 + (e_1 \w e_2 - e_3 \w e_4) \w (e_5 \w e_6 - e_7 \w e_8) \\
& \qquad {} + (e_1 \w e_3 - e_4 \w e_2) \w (e_5 \w e_7 - e_8 \w e_6) \\
& \qquad {} + (e_1 \w e_4 - e_2 \w e_3) \w (e_5 \w e_8 - e_6 \w e_7) + e_5 \w e_6 \w e_7 \w e_8.
\end{aligned}
\end{equation}
We need to compute the integrand $(\tr_{g} h)  \vol_{(M, g)} = g^{ab} h_{ab}  \vol_{(M, g)}$ of~\eqref{eq:first-variation} (at $t=0$), with $h = h(\sigma)$ depending linearly on $\sigma$ via~\eqref{eq:h-from-sigma}. For now we take $\sigma$ to be any $4$-form in $\Omega^4_{1 + 35 + 7}$, and we will later specialize to $\sigma = \rest{\ddt}{t=0} \Ph_t = \pi_{7 + 35} d \dot{\gamma}$. With $\wh{\sigma}$ defined as in Lemma~\ref{lemma:h-from-sigma}, we compute with respect to this local frame that
\begin{equation} \label{eq:trace-hat-sigma}
\tr_{\ol{g}} \wh{\sigma} = \sigma_{ijkl} \Ph_{ijkl} = 24 (\sigma_{1234} + A(\sigma) + \sigma_{5678})
\end{equation}
where
\begin{equation} \label{eq:sigma-A-temp}
\begin{aligned}
A(\sigma) & = \sigma_{1256} - \sigma_{1278} - \sigma_{3456} + \sigma_{3478} + \sigma_{1357} - \sigma_{1386} - \sigma_{4257} + \sigma_{4286} \\
& \qquad {} + \sigma_{1458} - \sigma_{1467} - \sigma_{2358} + \sigma_{2367}
\end{aligned}
\end{equation}
consists of the 12 terms involving two tangent and two normal indices. In~\eqref{eq:trace-hat-sigma}, the factor of $24 = 4!$ arises because we are summing over all $1 \leq i, j, k, l \leq 8$. Similarly, with respect to this local frame, we need to compute
\begin{equation} \label{eq:hat-sigma-temp}
\wh{\sigma}_{aa} = \sigma_{aijk} \Ph_{aijk} = \sigma_{1ijk} \Ph_{1ijk} + \sigma_{2ijk} \Ph_{2ijk} + \sigma_{3ijk} \Ph_{3ijk} + \sigma_{4ijk} \Ph_{4ijk}.
\end{equation}
We can compute this as follows. First, since we are summing over all $1 \leq i, j, k \leq 8$, we get an overall factor of $6 = 3!$. Looking at~\eqref{eq:Cayley-A-Phi}, the term $e_1 \w e_2 \w e_3 \w e_4$ will contribute to every one of the four terms in~\eqref{eq:hat-sigma-temp}, since all four indices $1,2,3,4$ appear. The term $e_5 \w e_6 \w e_7 \w e_8$ will not contribute at all. Finally, the middle twelve terms, which have two tangent and two normal indices, will each contribute to two of the four terms in~\eqref{eq:hat-sigma-temp}. In summary, we have
\begin{equation} \label{eq:hat-sigma-temp2}
\wh{\sigma}_{aa} = 6 ( 4 \sigma_{1234} + 2 A(\sigma) ),
\end{equation}
where $A(\sigma)$ is given by~\eqref{eq:sigma-A-temp}.

Using~\eqref{eq:h-from-sigma} and equations~\eqref{eq:trace-hat-sigma} and~\eqref{eq:hat-sigma-temp2}, we compute
\begin{align} \nonumber
h(\sigma)_{aa} & = \frac{1}{12} \wh{\sigma}_{aa} - \frac{4}{112} \tr_{\ol{g}} \wh \sigma \\ \nonumber
& = \frac{1}{12} 6 ( 4 \sigma_{1234} + 2 A(\sigma) ) - \frac{1}{28} 24 (\sigma_{1234} + A(\sigma) + \sigma_{5678}) \\
& = \frac{8}{7} \sigma_{1234} + \frac{1}{7} A(\sigma) - \frac{6}{7} \sigma_{5678}. \label{eq:Cayley-trgh}
\end{align}
Since by~\eqref{eq:h-from-sigma} only the symmetric part of $\wh{\sigma}$ contributes to $h$, and by~\eqref{eq:SP-4-form-decomp} the skew part of $\wh{\sigma}$ comes from $\sigma_7$, we deduce that $h(\cdot)$ annihilates $\Omega^4_{7}$. Hence, recalling that $\sigma \in \Omega^4_{1 + 35 + 7}$, we have
\begin{equation} \label{eq:CaleyA-1}
h(\sigma)_{aa} = h(\sigma_{1})_{aa} + h(\sigma_{35})_{aa}.
\end{equation}
From the fact that $\Omega^4_1 = \{ f \Ph \mid f \in C^{\infty}(\ol{M}) \}$, we deduce that $A(\sigma_1) = 12(\sigma_1)_{1234}$ and that $(\sigma_1)_{1234} = (\sigma_1)_{5678}$, whence by~\eqref{eq:Cayley-trgh} with $\sigma_1$ in place of $\sigma$, we have
\begin{equation} \label{eq:CaleyA-2}
h(\sigma_1)_{aa} = 2(\sigma_1)_{1234}.
\end{equation}
On the other hand, because $\sigma_{35}$ is \emph{anti-self-dual}, we have $(\sigma_{35})_{5678} = - (\sigma_{35})_{1234}$, and it is easy to see that the expression~\eqref{eq:sigma-A-temp} for $A(\sigma_{35})$ vanishes, because the 12 terms cancel in pairs. [For example, $(\sigma_{35})_{3478} = - (\sigma_{35})_{1256}$.] Thus we have
\begin{equation} \label{eq:CaleyA-3}
h(\sigma_{35})_{aa} = \frac{8}{7} (\sigma_{35})_{1234} + 0 - \frac{6}{7} ( - (\sigma_{35})_{1234}) = 2 (\sigma_{35})_{1234}.
\end{equation}
Finally, it is easy to see from the second equation in~\eqref{eq:SP-4-form-decomp} and~\eqref{eq:Cayley-A-Phi} that
\begin{equation} \label{eq:CayleyA-4}
\rest{\sigma_7}{M} = 0, \quad \text{so} \quad (\sigma_7)_{1234} = 0.
\end{equation}
Using~\eqref{eq:CaleyA-1}--\eqref{eq:CayleyA-4}, equation~\eqref{eq:Cayley-trgh} becomes
\begin{equation}\label{eq:Cayley-trgh-processed}
h(\sigma)_{aa} = 2 (\sigma_{1 + 35})_{1234} = 2\sigma_{1234}.
\end{equation}

To continue we specialize to $\sigma = \rest{\ddt}{t=0} \Ph_t = \pi_{7 + 35} d \dot{\gamma}$. This is where we need to use our hypothesis that the velocity $\pi_{7 + 35} d \dot \gamma$ has no $\Omega^4_1$ component in this case, to be able to ensure that such first variations vanish when $M$ is Cayley. Specifically, by the first equality in~\eqref{eq:Cayley-trgh-processed} we have that
$$(\tr_{g} h)  \vol_{(M, g)} = h_{aa} e_1 \w e_2 \w e_3 \w e_4 = 2 (\pi_{35} d \dot \gamma)_{1234} e_1 \w e_2 \w e_3 \w e_4 = 2 (\rest{\pi_{35} d \dot \gamma)}{M}.$$
Since
$$ 2 \pi_{35} d \dot \gamma = 2 \pi_- d \dot \gamma = (d \dot \gamma - \sta d \dot \gamma), $$
from~\eqref{eq:first-variation} and Stokes's Theorem we deduce that
$$ \rest{\ddt}{t=0} \cV(\ol{g}_t) = \frac{1}{2} \int_M \rest{(d \dot \gamma - \sta d \dot \gamma)}{M} = 0 - \frac{1}{2} \int_M \rest{(\sta d \dot \gamma)}{M}. $$
Thus, further imposing the additional condition from~\eqref{eq:Cayley-condition} completes the proof.
\end{proof}
\begin{rmk} \label{rmk:Cayley-velocity-not-exact}
Here we encounter a \emph{marked difference} from the associative and coassociative cases. Even though, by~\eqref{eq:Cayley-trgh-processed}, $\frac{1}{2}(\tr_g h) \vol_{(M, g)}$ is still precisely the restriction to $M$ of the velocity $\dot \mu_{t=0}$ of the path of calibration forms $\mu_t$, whether it can be made exact on $\ol{M}$ is not clear and application of Stokes's Theorem does not seem possible in general without the additional assumption~\eqref{eq:Cayley-condition}. See also Remark~\ref{rmk:Cayley-cannot-fix}.
\end{rmk}

\begin{thm}[Theorem B for Cayley submanifolds] \label{thm:Cayley-B}
Suppose that $\ol{g}$ is a critical point of the functional $\cV$ with respect to variations $\ol{g}_t$ induced by $\Ph_t$ where $\rest{\ddt}{t=0} \Ph_t = \pi_{35+7} d \dot{\gamma}$, with $\dot \gamma$ satisfying the additional  condition~\eqref{eq:Cayley-condition}. Then the submanifold $M$ is Cayley.
\end{thm}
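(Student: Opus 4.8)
The plan is to follow the template of the associative and coassociative cases (Theorems~\ref{thm:assoc-B} and~\ref{thm:coassoc-B}): feed a carefully chosen admissible test variation into the first variation formula~\eqref{eq:first-variation}, reduce $\tr_g h$ to a contraction against $\Ph$, and extract a sum of squares. Guided by Remark~\ref{rmk:G2-attempts} and the coassociative test form~\eqref{eq:coassoc-test-variation}, for arbitrary tangent vector fields $V, W$ on $M$, extended with compact support to $\ol{M}$, I would take
\begin{equation} \label{eq:Cayley-test-variation}
\dot\gamma = V \w W \w P(V, W, \ol{\nabla} F),
\end{equation}
with $F$ as in Definition~\ref{defn:F}, and set $\gamma_t = t \dot\gamma$. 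Since $\dot\gamma$ has exactly the same algebraic shape as its coassociative counterpart, with the genuine cross product $P$ in place of $\chi$, the component formula for $\dot\gamma_{jkl}$ and the on-$M$ expression for $\ol{\nabla}_i \dot\gamma_{jkl}$ are verbatim the analogues of~\eqref{eq:gamma-coords},~\eqref{eq:nabla-gamma-ijkl}, and~\eqref{eq:nabla-gamma-jikl} with $\chi$ replaced by $P$, using $\ol{\nabla} F = 0$ and $\ol{\nabla}_X(\ol{\nabla} F) = X^{\perp}$ on $M$ from Lemma~\ref{lemma:distance-function}.

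Before proceeding I would verify that~\eqref{eq:Cayley-test-variation} is admissible, i.e.\ that it satisfies the side condition~\eqref{eq:Cayley-condition}. In an oriented adapted orthonormal frame, $\rest{(\sta d\dot\gamma)}{M}$ equals, up to sign, the purely normal component $(d\dot\gamma)_{5678}\,\vol_{(M,g)}$. But each term of $\ol{\nabla}_i \dot\gamma_{jkl}$ on $M$ carries a factor that is antisymmetric in the components of the tangent fields $V, W$ across the slots $j,k,l$; when $j,k,l$ are all normal these factors vanish, since $V, W$ are tangent. Hence $\ol{\nabla}_i \dot\gamma_{jkl} = 0$ whenever $j,k,l$ are all normal, so $(d\dot\gamma)_{5678} = 0$ and in fact $\rest{(\sta d\dot\gamma)}{M} = 0$ pointwise along $M$. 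Thus~\eqref{eq:Cayley-condition} holds automatically and the hypothesis of the theorem applies to this family.

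Next I would reduce $\tr_g h$ to a contraction of $d\dot\gamma$ against $\Ph$. Writing $\sigma = \pi_{35+7} d\dot\gamma$ and using Lemma~\ref{lemma:h-from-sigma} with the $\Spin{7}$ contraction identities~\eqref{eq:SP-identities}, the key point is that $\sigma$ has no $\Omega^4_1$ component, so $\tr_{\ol{g}} \wh{\sigma} = 0$, while the contraction $\wh{(\cdot)}$ annihilates $\Omega^4_{27}$ and sends $\Omega^4_7$ to skew tensors. Exactly as in~\eqref{eq:assoc-h-trace} and~\eqref{eq:coassoc-h-trace}, this should yield
\[
\tr_g h = \frac{1}{24}\,(d\dot\gamma)\big( -P(e_m,e_n,e_p)^T + P(e_m,e_n,e_p)^{\perp},\, e_m, e_n, e_p \big),
\]
where $d\dot\gamma$ replaces $\sigma$ precisely because the $\Omega^4_1$ and $\Omega^4_{27}$ parts drop out of this combination. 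I would then compute the two pieces $(d\dot\gamma)(P(e_m,e_n,e_p)^T, e_m,e_n,e_p)$ and $(d\dot\gamma)(P(e_m,e_n,e_p)^{\perp}, e_m,e_n,e_p)$ separately, the analogues of~\eqref{eq:d-gamma-ijkl-h} and~\eqref{eq:d-gamma-jikl-h}, invoking $P(X,Y,Z) \perp X,Y,Z$ and the genuine cross product identity $|P(X,Y,Z)|^2 = |X \w Y \w Z|^2$, together with a reindexing using $\langle P(V,W,e_l), e_a\rangle = -\langle P(V,W,e_a), e_l\rangle$ to trade $\sum_{l}|P(V,W,e_l)^T|^2$ over normal $l$ for $\sum_a |P(V,W,e_a)^{\perp}|^2$ over tangent $a$. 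I expect the mixed terms to collapse into a single definite-sign sum $\tr_g h = c\sum_{a=1}^4 |P(V,W,e_a)^{\perp}|^2$ for a nonzero constant $c$.

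Finally, the criticality hypothesis forces $\int_M \tr_g h\,\vol_{(M,g)} = 0$; since the integrand has a fixed sign it must vanish, giving $P(V,W,e_a)^{\perp} = 0$ for $a = 1,\dots,4$ and all tangent $V, W$. Taking $V, W$ and $e_a$ among a tangent frame $e_1,\dots,e_4$ shows $P(e_a,e_b,e_c)$ is tangent whenever $a,b,c$ are, which is exactly the Cayley condition; unlike the coassociative case, no analogue of Proposition~\ref{prop:coassoc-condition} is needed, since $P$ is a genuine cross product and ``Cayley'' is defined directly in terms of $P$. I expect the main obstacle to be the explicit evaluation in the third step: as in the coassociative proof this is a lengthy $8$-dimensional contraction computation, and the bookkeeping that collapses the mixed $P(V,W,\cdot)^T$ and $P(V,W,\cdot)^{\perp}$ terms into pure squares must be redone from scratch, because both the numerical coefficients from~\eqref{eq:SP-identities} and the cross product identity for $P$ differ from the $\G$ case.
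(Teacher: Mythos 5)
Your proposal is correct and follows essentially the same route as the paper: the same test form $\dot\gamma = V \w W \w P(V, W, \ol{\nabla} F)$, the same reduction of $\tr_g h$ to $\frac{1}{24}(d\dot\gamma)\big({-P(e_m,e_n,e_p)^T} + P(e_m,e_n,e_p)^{\perp}, e_m,e_n,e_p\big)$ via the observation that the $\Omega^4_1$ and $\Omega^4_{27}$ parts drop out, the same sum-of-squares conclusion, and the same remark that no analogue of Proposition~\ref{prop:coassoc-condition} is needed. The only (minor) divergence is your verification of~\eqref{eq:Cayley-condition}: you show $(d\dot\gamma)_{jkl m} = 0$ for all-normal indices directly from the vanishing of the tangential components of $V, W$, whereas the paper argues via the type decomposition of $\sta\, \ol{\nabla}_{e_i}\dot\gamma$; both give the stronger pointwise statement $\rest{(\sta\, d\dot\gamma)}{M} = 0$, and the paper additionally notes that the explicit evaluation of the two contractions carries over verbatim from the coassociative case since it uses only the skewness of $P$ and $\Ph$, so less recomputation is needed there than you anticipate.
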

\begin{proof}
Given any smooth $3$-form $\dot\gamma$ such that $\int_M \rest{(\sta d \dot \gamma)}{M} = 0$, as noted right before Remark~\ref{rmk:Um-not-open} there exists a family $\Ph_t$ as above inducing a variation $\ol{g}_t$. Writing $\sigma$ for $\pi_{35+7} d \dot \gamma = \rest{\ddt}{t=0} \Ph_t$ and using~\eqref{eq:first-variation}, our hypothesis then gives that
\begin{equation} \label{eq:Cayley-B-hypothesis}
\rest{\ddt}{t=0} \cV (\ol{g}_t) = \frac{1}{2} \int_M (\tr_g h) \, \vol_{(M, g)} = 0,
\end{equation}
with $h = h(\sigma)$ given in terms of $\sigma$ by~\eqref{eq:h-from-sigma}. We observe that since $\sigma_1 = 0$, we have
\begin{equation}\label{eq:h0-suffices}
h(\sigma) = h(\sigma - \sigma_1) = h(\sigma)^0,
\end{equation}
where by~\eqref{eq:Cayley-metric-time} and~\eqref{eq:h0-from-sigma}, the traceless symmetric tensor $h(\sigma)^0$ is given by
\begin{equation} \label{eq:Cayley-B-start}
h(\sigma)^0_{il} = \frac{1}{24}( \wh{\sigma}_{il} + \wh{\sigma}_{li} ) - \frac{1}{96} (\tr_{\ol{g}} \wh{\sigma}) \ol{g}_{il}, \qquad \text{where} \qquad \wh{\sigma}_{il} = \sigma_{imnp} \Ph_{lmnp}.
\end{equation}
Taking the trace of $h(\sigma)^0$ in the tangential directions gives
\begin{align}
\tr_g h(\sigma)^0 = \sum_{a=1}^4 h(\sigma)^0_{aa} & = \frac{1}{12} \sum_{a=1}^4 \wh{\sigma}_{aa} - \frac{1}{24} \sum_{i=1}^8 \wh{\sigma}_{ii} \nonumber \\
& = - \frac{1}{12} \sum_{a=1}^4 \sigma_{amnp} \Ph_{mnpa} + \frac{1}{24} \sum_{i=1}^8 \sigma_{imnp} \Ph_{mnpi} \nonumber \\
& = - \frac{1}{12} \sigma \big( P(e_m, e_n, e_p)^T, e_m, e_n, e_p \big) + \frac{1}{24} \sigma( P(e_m, e_n, e_p), e_m, e_n, e_p ) \nonumber \\
& = \frac{1}{24} \sigma \big( - P (e_m, e_n, e_p)^T + P (e_m, e_n, e_p)^{\perp}, e_m, e_n, e_p \big). \label{eq:Cayley-h0-trace}
\end{align}
We need to make an appropriate choice of $\dot \gamma$. We proceed as in the coassociative case of Theorem~\ref{thm:coassoc-B}, replacing $\chi$ with $P$. Explicitly, let $V, W$ be any smooth tangent vector fields on $M$ and extend them both smoothly to global vector fields with compact support, still denoted by $V, W$, on the ambient space $\ol{M}$. We choose
\begin{equation} \label{eq:Cayley-test-variation}
\dot \gamma = V \w W \w P(V, W, \ol{\nabla} F)
\end{equation}
where $F$ is as in Definition~\ref{defn:F}. Note that while it is true that $d \dot \gamma$ is in general not of type $\Omega^4_{35+7}$, its $\Omega^4_{27}$ component drops out in the computation of $\wh{\sigma}$, and its $\Omega^4_1$ component drops out in the computation of $h^0$. Consequently,
\begin{equation} \label{eq:d-dot-gamma-suffices}
h(\sigma)^0 = h(d\dot\gamma)^0.
\end{equation}
Thus, from here on we can just take $\sigma = d \dot \gamma$ in~\eqref{eq:Cayley-h0-trace}, with $\dot \gamma$ as in~\eqref{eq:Cayley-test-variation}. (It is clear from its derivation that~\eqref{eq:Cayley-h0-trace} holds with $\sigma$ replaced by \emph{any} $4$-form.)

For this particular choice of $\dot \gamma$ in~\eqref{eq:Cayley-test-variation}, we need to verify that the additional condition $\int_M \rest{(\sta d \dot \gamma)}{M} = 0$ of equation~\eqref{eq:Cayley-condition} is indeed satisfied. We have
\begin{equation} \label{eq:Cayley-B-special}
\sta d \dot \gamma = - \sta d \sta (\sta \dot \gamma) = - e_i \hk \ol{\nabla}_{e_i} (\sta \dot \gamma) = - e_i \hk ( \sta \ol{\nabla}_{e_i} \dot \gamma).
\end{equation}
By~\eqref{eq:Cayley-test-variation} and Remark~\ref{rmk:lf}, we have
$$ \ol{\nabla}_{e_i} \dot \gamma = V \w W \w P(V, W, \ol{\nabla}_{e_i} \ol{\nabla} F) + \lf. $$
Thus by~\eqref{eq:Hessian-Fb}, along $M$ we have
$$ \ol{\nabla}_{e_i} \dot \gamma = V \w W \w P(V, W, e_i^{\perp}). $$
Since $V, W$ are \emph{tangent} along $M$, the above shows that $\ol{\nabla}_{e_i} \dot \gamma$ is of type $(3, 0) + (2, 1)$, where type $(p,q)$ denotes $p$ tangent directions and $q$ normal directions. Hence $\sta (\ol{\nabla}_{e_i} \dot \gamma)$ is of type $(1,4) + (2, 3)$. It follows that each term of $e_i \hk ( \sta \ol{\nabla}_{e_i} \dot \gamma)$ has at least one (in fact at least two) normal directions, thus it restricts to zero on $M$. By~\eqref{eq:Cayley-B-special}, we conclude that in fact $\rest{(\sta d \dot \gamma)}{M} = 0$, so certainly~\eqref{eq:Cayley-condition} holds.

The computations that led to equations~\eqref{eq:d-gamma-ijkl-h} and~\eqref{eq:d-gamma-jikl-h} in the proof of Theorem~\ref{thm:coassoc-B} used only the facts that $\chi$ and $\ps$ are skew. Since the same is true of $P$ and $\Ph$, the exact same computation yields
\begin{align}
& \qquad (d \dot \gamma) \big( P(e_j, e_k, e_l)^T, e_j, e_k, e_l \big) \nonumber \\
& = -12 \langle P(V, W, e_l)^{\perp}, P(V, W, e_l)^{\perp} \rangle - 6 \langle P(V, W, e_l^{\perp}), P(V, W, e_l)^T \rangle. \label{eq:d-gamma-ijkl-h-SP}
\end{align}
and
\begin{align}
& \qquad (d \dot \gamma) \big( P(e_j, e_k, e_l)^{\perp}, e_j, e_k, e_l \big) \nonumber \\
& = - 6 \langle P(V, W, e_l)^{\perp}, P(V, W, e_l)^{\perp} \rangle - 6 \langle P(V, W, e_l^{\perp}), P(V, W, e_l)^{\perp} \rangle. \label{eq:d-gamma-jikl-h-SP}
\end{align}

Now we substitute~\eqref{eq:d-gamma-ijkl-h-SP} and~\eqref{eq:d-gamma-jikl-h-SP} into the right hand side of~\eqref{eq:Cayley-h0-trace} for $\sigma = d \dot \gamma$, to obtain 
\begin{align*}
\tr_g h(d\dot\gamma)^0 & = \frac{1}{4} \langle P(V, W, e_l)^{\perp}, P(V, W, e_l)^{\perp} \rangle + \frac{1}{4} \langle P(V, W, e_l^{\perp}), P(V, W, e_l)^T \rangle \\
& \qquad {} - \frac{1}{4} \langle P(V, W, e_l^{\perp}), P(V, W, e_l)^{\perp} \rangle.
\end{align*}
Because $\{ e_1, e_2, e_3, e_4 \}$ are tangent and $\{ e_5, e_6, e_7, e_8 \}$ are normal, the above becomes
\begin{align*}
\tr_g h(d\dot\gamma)^0 & = \frac{1}{4} \sum_{a=1}^4 | P(V, W, e_a)^{\perp} |^2 + \frac{1}{4} \sum_{k=5}^8 | P(V, W, e_k)^{\perp} |^2 \\
& \qquad {} + \frac{1}{4} \sum_{l=5}^8 \langle P(V, W, e_l), P(V, W, e_l)^T \rangle - \frac{1}{4} \sum_{l=5}^8 | P(V, W, e_l)^{\perp} |^2 \\
& = \frac{1}{4} \sum_{a=1}^4 | P(V, W, e_a)^{\perp} |^2 + \frac{1}{4} \sum_{l=5}^8 | P(V, W, e_l)^T |^2.
\end{align*}
Noting that
$$ \sum_{l=5}^8 | P(V, W, e_l)^T |^2 = \sum_{l=5}^8 \sum_{a=1}^4 \langle P(V, W, e_l), e_a \rangle^2 = \sum_{a=1}^4 \sum_{l=5}^8 \langle P(V, W, e_a), e_l \rangle^2 = \sum_{a=1}^4 | P(V, W, e_a)^{\perp} |^2, $$
we deduce that
$$ \tr_g h(d\dot\gamma)^0 = \frac{1}{2} \sum_{a=1}^4 | P(V, W, e_a)^{\perp} |^2. $$
Hence from the above and~\eqref{eq:Cayley-B-hypothesis}, together with equations~\eqref{eq:h0-suffices} and~\eqref{eq:d-dot-gamma-suffices}, we finally obtain
\begin{equation*}
0 = \rest{\ddt}{t=0} \cV(\ol{g}_t) = \frac{1}{4} \int_M \left( \sum_{a=1}^4 | P(V, W, e_a)^{\perp} |^2 \right) \vol_{(M, g)}.
\end{equation*}
The integrand must vanish as it is nonnegative. Hence $P(V, W, e_a)^{\perp} = 0$ for $a = 1, 2, 3, 4$ and \emph{any tangent vector fields $V, W$ on $M$}. Taking $V, W$ to be $e_1, e_2, e_3, e_4$ shows that $P(e_a, e_b, e_c)$ is tangent to $M$ for $1 \leq a, b, c \leq 4$ and we conclude that $M$ is Cayley.
\end{proof}

\begin{rmk} \label{rmk:Cayley-differences} We saw in the proof of Theorem~\ref{thm:Cayley-A} why we needed to impose that $\sigma = \rest{\ddt}{t=0} \Ph_t = \pi_{35+7} d \dot \gamma$. Interestingly, in the almost complex, associative, and coassociative cases, not only do the  directions which are multiples of the calibration form not cause any problems for Theorem A, but they actually \emph{must be present} in order to obtain the proper cancellation in Theorems~\ref{thm:Um-A},~\ref{thm:assoc-A}, and~\ref{thm:coassoc-A}.

Similarly, by examining the proof of Theorem~\ref{thm:Cayley-B} we can see explicitly that the test variation~\eqref{eq:Cayley-test-variation}, which is precisely analogous to the test variations~\eqref{eq:Um-test-variation},~\eqref{eq:assoc-test-variation}, and~\eqref{eq:coassoc-test-variation} that worked in the almost complex, associative, and coassociative cases, respectively, \emph{does not work} in the Cayley case \emph{unless we project away} the component of $d \dot \gamma$ in the $\Omega^4_1$ direction. This is because if we do not drop the $\Omega^4_1$ term of $d \dot \gamma$, then we need to use $h$ instead of $h^0$, and thus we replace~\eqref{eq:Cayley-B-start} with~\eqref{eq:h-from-sigma}. The effect of replacing the factor of $\frac{1}{96}$ with $\frac{1}{112}$ destroys the careful cancellation. In fact, if one carefully checks the details, one finds that not dropping the $\Omega^4_1$ component of $d \dot \gamma$ results in
$$ \rest{\ddt}{t=0} \cV(\ol{g}_t) = \int_M \left(  \frac{1}{4} \sum_{a=1}^4 | P(V, W, e_a)^{\perp} |^2 + \frac{2}{7} | V \w W |^2 \right) \vol_{(M, g)} = 0. $$
This forces $V, W$ to be linearly dependent on $M$ and hence we cannot use $P(V, W, e_a)^{\perp} = 0$ to conclude anything in this case.

As we discussed at the start of this section, it seems that this problem (needing to drop the $\Omega^4_1$ component) may be somehow related to the fact that the space $\Omega^4_+$ of $\Spin{7}$-structures on $\ol{M}$ is \emph{not} an open subset of $\Omega^4$, unlike the case of $\G$-structures. This situation is reminiscent of a similar phenomenon encountered in~\cite[final two paragraphs of \S5.2]{K1}.
\end{rmk}

\begin{rmk} \label{rmk:Cayley-cannot-fix}
In this Cayley case, we had to impose a further condition on our allowed class of variations of the ambient metric, namely~\eqref{eq:Cayley-condition}, which is a condition that \emph{depends} on the submanifold $M$. A similar situation arose in the almost complex case when $k > 1$. (See parts (b) and (c) of Theorem~\ref{thm:Um-A}.) In that case, by imposing a condition on the torsion, specifically that $d \omega = 0$, we were able to remove this dependence on $M$. (See also Remark~\ref{rmk:Um-A-b}.)

It is therefore natural to consider whether imposing $d \Ph = 0$ (torsion-free $\Spin{7}$-structure) might ensure that~\eqref{eq:Cayley-condition} always holds when $M$ is Cayley. [This would mean that when $d\Ph = 0$, the requirement that the variations satisfy~\eqref{eq:Cayley-condition} can be dropped from Theorem~\ref{thm:Cayley-B} as well, since doing so results in a weaker statement.] However, it is easy to see that this is \emph{not} possible. The argument is as follows.

Consider a small ball $U$ in $M$ over which we have (oriented) local coordinates $x^1, \ldots, x^4$ and over which the normal bundle $NM$ trivializes with oriented orthonormal frame $e_5, \ldots, e_8$, inducing (oriented) local coordinates $x^1, \ldots, x^8$ on the total space of $NM$ over $U$, and hence, for $\sum_{k=5}^8 (x^k)^2 < \delta^2$, on an open set $V$ in $\ol{M}$ containing $U$ via the tubular neighbourhood theorem. By construction, on $U$ we have $x^k = 0$ and $\ddx{k} = e_k$ for $5 \leq k \leq 8$. It follows that, on $U$, we have $\sta (\dx{5} \w \dx{6} \w \dx{7} \w \dx{8}) = h \dx{1} \w \dx{2} \w \dx{3} \w \dx{4}$ where $h$ is some positive function.

Let $f \in C^{\infty}_c (U)$ be a bump function, so that $\int_M f > 0$. Let $\zeta \in C_c^{\infty} (B_{\delta}(0))$ be a cutoff function, with $\zeta = 1$ on $B_{\frac{1}{2} \delta} (0)$. Let $\dot \gamma = x^5 f(x^1, \ldots, x^4) \zeta(x^5, \ldots, x^8) \dx{6} \w \dx{7} \w \dx{8}$ on $V$, and extend it by zero to a smooth $3$-form on $\ol{M}$. By construction, on $U$ we have $\zeta = 1$ and also $x^k = 0$ and $\dx{k} = 0$ for $5 \leq k \leq 8$. It follows that, on $U$, we have $\sta (d \dot \gamma) = h f \dx{1} \w \dx{2} \w \dx{3} \w \dx{4}$ on $M$, so $\int_M \rest{(\sta d \dot \gamma)}{M} > 0$. [Note that this argument is very general and has nothing to do with Cayley submanifolds.]
\end{rmk}

\section{Further observations} \label{sec:further}

In this section we collect some further observations about our results.

\subsection{Alternate proof of Theorem A} \label{sec:thm-A-alt}

There is an alternate method of proof for Theorem A which is definitely simpler, but it does not allow one to determine the correct class of metric variations in the $\Spin{7}$ case as did our proof of Theorem~\ref{thm:Cayley-A}. Rather, this simpler proof works if one already knows the correct class of metric variations. (See also Remark~\ref{rmk:advantage-thm-A} below.) The alternate argument is as follows.

Let $(\ol{M}, \ol{g})$ be Riemannian, and let $\mu$ be a calibration form on $\ol{M}$ with respect to $\ol{g}$. Suppose that a path $\mu_t$ with $\mu_{t=0} = \mu$ induces a family of metrics $\ol{g}_t$ on $\ol{M}$ with $\ol{g}_{t=0} = \ol{g}$, such that $\mu_t$ is a calibration with respect to $\ol{g}_t$ for all $t \in (- \eps, \eps)$. Let $M$ be a $k$-dimensional compact embedded submanifold of $\ol{M}$ such that $M$ is calibrated by $\mu$, and let $g_t = \rest{\ol{g}_t}{M}$. Then we have
$$ \int_M \rest{\mu_t}{M} \leq \int_M \vol_{(M, g_t)}, \qquad \text{with equality at $t=0$,} $$
since the inequality holds on the level of the integrands. It follows that
$$ \rest{\ddt}{t=0} \cV(\ol{g}_t) = \rest{\ddt}{t=0} \int_M \rest{\mu_t}{M} = \int_M \rest{\dot \mu}{M}, $$
where $\dot \mu = \rest{\ddt}{t=0} \mu_t$. Thus if $\rest{\dot \mu}{M}$ is exact, then the right hand side is zero by Stokes's Theorem. This immediately proves Theorem A in the associative and coassociative cases, namely Theorems~\ref{thm:assoc-A} and~\ref{thm:coassoc-A}.

In the almost complex case, we have a family $(\ol{g}_t, J, \omega_t)$ of $\U{m}$-structures, so that $\mu_t = \frac{1}{k!} \omega_t^k$ are all calibrations with respect to $\ol{g}_t$. When $k=1$, we have
$$ \int_M \omega_t = \int_M \wt \omega_t = \int_M (\omega + d \alpha_t) = \int_M \omega = \text{ constant}, $$
where in the first equality we have used that the $(2,0) + (0,2)$ form $\rho_t$ restricts to zero on a $2$-dimensional almost complex submanifold. This is part (a) of Theorem~\ref{thm:Um-A}.

For $k > 1$, the calibration form is $\mu_t = \frac{1}{k!} \omega_t^k$ and hence
$$ \dot \mu_t = \frac{1}{(k-1)!} \omega_t^{k-1} \w \dot \omega_t. $$
Because the almost complex structure $J$ is fixed, $\dot \omega_t$ is of type $(1,1)$. Moreover, only a type $(k,k)$ form survives on restriction to a $2k$-dimensional almost complex submanifold. Using this fact and $\dot \omega_t = d \dot \alpha_t - \dot \rho_t$, we have
$$ \int_M \dot \mu_t = \frac{1}{(k-1)!} \int_M \omega_t^{k-1} \w \dot \omega_t = \frac{1}{(k-1)!} \int_M \omega_t^{k-1} \w d \dot \alpha_t. $$
If $\omega_t^{k-1} \w d \dot \alpha_t$ is exact, then the right hand side vanishes, yielding part (b) of Theorem~\ref{thm:Um-A}. This is automatic at $t=0$ if $d \omega = 0$, yielding part (c) of Theorem~\ref{thm:Um-A}.

In the $\Spin{7}$ case, we know from the proof of our Theorem~\ref{thm:Cayley-A} that the correct class of metric deformations correspond to paths $\Ph_t$ of $\Spin{7}$-structures such that $\rest{\ddt}{t=0} \Ph_t = \pi_{7+35} d \dot \gamma$, with the additional (submanifold dependent) condition~\eqref{eq:Cayley-condition} that $\int_M \rest{(\sta d \dot \gamma)}{M} = 0$. Since we know this now, the simpler method provides a quick verification, as follows. We have
$$ \rest{\ddt}{t=0} \cV( \ol{g}_t ) = \int_M \dot \Ph|_M = \int_M \rest{(\pi_{7+35} d \dot \gamma)}{M}. $$
Since $M$ is Cayley, one can show that any $\Omega^4_7$ form must restrict to zero on $M$. [See the sentence containing equation \eqref{eq:CayleyA-4}.] Then the right hand side above becomes
$$ \int_M \rest{(\pi_{35} d \dot \gamma)}{M} = \frac{1}{2} \int_M \rest{(d \dot \gamma - \sta d \dot \gamma)}{M} = 0 - \frac{1}{2} \int_M \rest{(\sta d \dot \gamma)}{M}, $$
which vanishes because of condition~\eqref{eq:Cayley-condition}.

\begin{rmk} \label{rmk:advantage-thm-A}
Another advantage of our case-by-case approaches to the proof of Theorem A is that it allows one to see explicitly the precise balancing of the $\pi_1$ and $\pi_{27}$ components of $\dot \mu$ in the associative and coassociative cases which makes the proof work, and which precisely \emph{does not} work in the Cayley case, as discussed in Remark~\ref{rmk:Cayley-differences}. It remains mysterious exactly why the Cayley case is so different. One possible avenue for exploring this is to analyze the properties of the infinitesimal ambient metric variations $h = \dot{\ol{g}}$ in terms of the decomposition of symmetric $2$-tensors on a Riemannian manifold (at least when $\ol{M}$ is compact) as described in Berger--Ebin~\cite{BE}.
\end{rmk}

\subsection{Relation to similar characterization of minimal submanifolds} \label{sec:related-minimal}

As before, fix a compact oriented embedded $k$-dimensional submanifold $M$ in the $n$-dimensional manifold $\ol{M}$, and consider again the functional
$$ \ol{g} \mapsto \cV(\ol{g}) = \int_M \vol_{(M,g)} \qquad \text{ where $g = \rest{\ol{g}}{M}$}, $$
on the space of Riemannian metrics $\ol{g}$ on $\ol{M}$.

Consider the class of ambient metric variations arising from diffeomorphisms isotopic to the identity. That is, consider variations of the form $\ol{g}_t = f_t^* \ol{g}$, where $f_t$ is the local one-parameter family of diffeomorphisms generated by a vector field $X$ on $\ol{M}$. By Lemma~\ref{lemma:first-variation}, $\ol{g}$ is a critical point of $\cV$ with respect to such variations if and only if $\frac{1}{2} \int_M (\tr_g h) \vol_{(M, g)} = 0$, where $h = \rest{\ddt}{t=0} \ol{g}_t$.

From $\ol{g}_t = f_t^* \ol{g}$, we have $\rest{\ddt}{t=0} \ol{g}_t = \mathcal L_X \ol{g}$, and thus
$$ h_{ij} = \ol{\nabla}_i X_j + \ol{\nabla}_j X_i. $$
Taking the trace in the tangential directions, we get
\begin{align*}
\frac{1}{2} \tr_g h & = \frac{1}{2} \sum_{a=1}^k h_{aa} = \sum_{a=1}^k \ol{\nabla}_a X_a = \sum_{a=1}^k \langle \ol{\nabla}_{e_a} X, e_a \rangle \\
& = \sum_{a=1}^k \langle \ol{\nabla}_a X^T + \ol{\nabla}_a X^{\perp}, e_a \rangle = \sum_{a=1}^k \langle \ol{\nabla}_{e_a} X^T, e_a \rangle - \sum_{a=1}^k \langle X^{\perp}, \ol{\nabla}_{e_a} e_a \rangle \\
& = \dive_g (X^T) - \langle X^{\perp}, \mathsf{H} \rangle,
\end{align*}
where $\dive_g$ is the Riemannian divergence on $(M, g)$ and $\mathsf{H}$ is the mean curvature vector field of $M$ in $(\ol{M}, \ol{g})$. Integrating both sides and using Stokes's Theorem, we deduce that $\ol{g}$ is a critical point of $\cV$ with respect to such variations if and only if $\mathsf{H} = 0$. (That is, if and only if $M$ is \emph{minimal}.)

Now suppose that $\ol{M}$ is equipped with a calibration form $\mu$, as in the four cases we considered in this paper. If $\mu_t = f_t^* \mu$, then in the associative, coassociative, and Cayley cases $\mu_t$ induces a variation of metrics $\ol{g}_t = f_t^* \ol{g}$. This is because $\mu$ determines the metric $\ol{g}$ in these three cases. In the almost complex case, it is easy to check using~\eqref{eq:Um-metric} that under the additional assumption that $df_t$ commutes with $J$ (which is fixed) for all $t$, then we get $\ol{g}_t = f_t^* \ol{g}$ in that case as well.

From $\dot \mu = \mathcal L_X \mu = d (X \hk \mu) + X \hk d \mu$, we see that these variations belong to the classes we considered in this paper if $d \mu = 0$. (Except possibly for the Cayley case.) This is consistent, because calibrated submanifolds are only guaranteed to be minimal if $d \mu = 0$. Thus, if the calibration form $\mu$ is closed, then the variations $\ol{g}_t$ induced from $\mu_t = f_t^* \mu$ are a proper subset of the variations we considered, and $\ol{g}$ is critical for this proper subset if and only if $M$ is minimal, whereas demanding that $\ol{g}$ be critical for the full class of variations we considered puts more constraints on $M$, and indeed we have shown that this happens if and only if $M$ is calibrated.

\subsection{Application to variational characterization of Smith maps} \label{sec:Smith}

In this section we apply our results to give a variational characterization of Smith maps. We first give a short introduction to Smith maps. (See~\cite{CKM} for more details.)

Let $\ol{M}$ be a smooth $n$-manifold, and let $M$ be a smooth compact oriented $k$-manifold. Given a smooth map $u \colon M \to \ol{M}$, its derivative $du$ is a smooth section of $T^* M \otimes u^* T \ol{M}$. In local coordinates $x^1, \ldots, x^k$ on $M$ and $y^1, \ldots, y^n$ on $\ol{M}$, we have
$$ du = \frac{\partial u^{\alpha}}{\partial x^i} dx^i \otimes u^* \frac{\partial}{\partial y^{\alpha}}. $$
Given Riemannian metrics $g$ and $\ol{g}$ on $M$ and $\ol{M}$, respectively, and a calibration $k$-form $\mu$ on $\ol{M}$ with respect to $\ol{g}$, we consider three different functionals involving $u$ and these geometric structures:
\begin{enumerate}[{$[$}1{$]$}]
\item From $g$ and $\ol{g}$ we get a metric $g^{-1} \otimes \ol{g}$ on $T^* M \otimes u^* T \ol{M}$, and thus we can compute the quantity
$$ |du|^2_{g, \ol{g}} = \langle du, du \rangle_{g^{-1} \otimes \ol{g}} = \frac{\partial u^{\alpha}}{\partial x^i} \frac{\partial u^{\beta}}{\partial x^j} g^{ij} (\ol{g}_{\alpha \beta} \circ u). $$
Since we have
$$ u^* \ol{g} = \frac{\partial u^{\alpha}}{\partial x^i} \frac{\partial u^{\beta}}{\partial x^j} (\ol{g}_{\alpha \beta} \circ u) dx^i dx^j, $$
we can also write
\begin{equation} \label{eq:norm-from-trace}
|du|^2_{g, \ol{g}} = \tr_g (u^* \ol{g}).
\end{equation}
That is, $u^* \ol{g}$ is a symmetric $2$-tensor on $M$, and $|du|^2_{g, \ol{g}}$ is its trace with respect to $g$. 

We define the \emph{$k$-energy} $\cE (u, g, \ol{g})$ of this triple of data to be
\begin{equation} \label{eq:k-energy}
\cE (u, g, \ol{g}) = \frac{1}{(\sqrt{k})^k} \int_M |du|^k_{g, \ol{g}} \, \vol_g.
\end{equation}
Note that there is in general no relation assumed between $g$ and $\ol{g}$. An important observation is that the $k$-energy depends only on the conformal class of $g$ on $M$. Explicitly, if we replace $g$ by $\lambda^2 g$, where $\lambda$ is a smooth positive function, then it is easy to see that the integrand $|du|^k_{g, \ol{g}} \, \vol_g$ is invariant.

\item Writing $\pa{u}{x^i}$ for $\pa{u^{\alpha}}{x^i} \paop{y^{\alpha}}$ and noting that the $k$-form $\big| \pa{u}{x^1} \w \cdots \w \pa{u}{x^k} \big|_{\ol{g}} \, dx^1 \w \cdots \w dx^k$ on $M$ is independent of the choice of local coordinates $x^1, \ldots, x^k$, we define the \emph{$k$-volume} $\cV(u, \ol{g})$ to be
\begin{equation} \label{eq:k-volume}
\cV(u, \ol{g}) = \int_M \Big| \pa{u}{x^1} \w \cdots \w \pa{u}{x^k} \Big|_{\ol{g}} \, dx^1 \w \cdots \w dx^k.
\end{equation}
It is easy to see that $u$ is an \emph{immersion} if and only if $u^* \ol{g}$ is a metric on $M$, in which case the integrand above is just $\vol_{u^* \ol{g}}$, and $\cV(u, \ol{g})$ is indeed the volume of the immersed submanifold $u(M)$ of $(\ol{M}, \ol{g})$. Below we denote the integrand of $\cV(u, \ol{g})$ by $\vol_{u^* \ol{g}}$ regardless of whether $u$ is an immersion.

\item In view of the previous point and recalling that our purpose here is to extend Theorems A and B to mappings, we also consider the following functional
$$ (u, \mu) \mapsto \int_{M}u^* \mu. $$
\end{enumerate}

These three functionals are related by two rather simple inequalities stated in the lemma below. The notion of a Smith map arises naturally from the conditions for equality.

\begin{lemma} \label{lemma:compare-cE-cV-mu}
The above three functionals are related as follows:
\begin{enumerate}[(a)]
\item $\cE (u, g, \ol{g}) \geq \cV (u, \ol{g})$. Equality holds if and only if $u^* \ol{g} = \frac{1}{k} |du|^2_{g, \ol{g}} \, g$ on $M$.
\item $\cV(u, \ol{g}) \geq \int_{M}u^* \mu$. Equality holds if and only if $u^* \mu = \vol_{u^* \ol{g}}$ on $M$.
\end{enumerate}
\end{lemma}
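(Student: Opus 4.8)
The plan is to prove both inequalities by establishing them \emph{pointwise} between the respective integrands and then integrating over $M$; in each case the pointwise statement is a piece of elementary linear algebra at a single point $x \in M$.

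For part (a), I would fix $x \in M$ and diagonalize the symmetric nonnegative $2$-tensor $(u^* \ol{g})_x$ with respect to $g_x$. Writing $\sigma_1, \ldots, \sigma_k \geq 0$ for the resulting singular values of $du_x$ (so that the eigenvalues of $g^{-1} u^* \ol{g}$ are $\sigma_1^2, \ldots, \sigma_k^2$), equation~\eqref{eq:norm-from-trace} gives $|du|^2_{g, \ol{g}} = \tr_g(u^* \ol{g}) = \sum_i \sigma_i^2$, while the ratio of volume forms is $\vol_{u^* \ol{g}} / \vol_g = \sqrt{\det(g^{-1} u^* \ol{g})} = \prod_i \sigma_i$. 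Under this dictionary the desired pointwise inequality $\tfrac{1}{(\sqrt{k})^k} |du|^k_{g, \ol{g}} \geq \vol_{u^* \ol{g}}/\vol_g$ becomes exactly
\[
\Big( \tfrac{1}{k} \sum_{i=1}^k \sigma_i^2 \Big)^{k/2} \geq \prod_{i=1}^k \sigma_i,
\]
which is the arithmetic--geometric mean inequality applied to the nonnegative numbers $\sigma_1^2, \ldots, \sigma_k^2$ and raised to the power $k/2$. Integrating against $\vol_g$ yields $\cE(u, g, \ol{g}) \geq \cV(u, \ol{g})$. Equality in AM--GM holds precisely when $\sigma_1^2 = \cdots = \sigma_k^2$, i.e.\ when $g^{-1} u^* \ol{g}$ is a multiple of the identity; the common eigenvalue must then be $\tfrac{1}{k}\sum_i \sigma_i^2 = \tfrac{1}{k} |du|^2_{g, \ol{g}}$, which is exactly the stated condition $u^* \ol{g} = \tfrac{1}{k} |du|^2_{g, \ol{g}} \, g$. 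Since the integrand inequality is pointwise, equality of the integrals forces pointwise equality everywhere by smoothness, giving the equality clause.

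For part (b), I would again argue pointwise. Choosing positively oriented local coordinates $x^1, \ldots, x^k$, the integrands of $\cV(u, \ol{g})$ and of $\int_M u^* \mu$ are the coefficients of $dx^1 \w \cdots \w dx^k$, namely $\big| \pa{u}{x^1} \w \cdots \w \pa{u}{x^k} \big|_{\ol{g}}$ and $\mu\big( \pa{u}{x^1}, \ldots, \pa{u}{x^k} \big)$. Because $\mu$ has comass one, the standard dual characterization gives, for \emph{arbitrary} vectors $w_1, \ldots, w_k \in T_{u(x)} \ol{M}$, the bound $|\mu(w_1, \ldots, w_k)| \leq |w_1 \w \cdots \w w_k|_{\ol{g}}$. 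Combining this with the trivial inequality $a \leq |a|$ gives $\mu\big(\pa{u}{x^1}, \ldots, \pa{u}{x^k}\big) \leq \big|\pa{u}{x^1} \w \cdots \w \pa{u}{x^k}\big|_{\ol{g}}$ at every point, i.e.\ $u^* \mu \leq \vol_{u^* \ol{g}}$ as integrands; integrating gives $\int_M u^* \mu \leq \cV(u, \ol{g})$. Equality of the integrals forces the nonnegative difference of integrands to vanish identically, whence $u^* \mu = \vol_{u^* \ol{g}}$ on $M$.

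The computations here are entirely routine, so there is no serious obstacle; the only points requiring a little care are bookkeeping ones. In part (a) one must allow $du_x$ to be degenerate (at non-immersion points some $\sigma_i$ vanish and $\vol_{u^* \ol{g}}$ vanishes there), but AM--GM is valid for all nonnegative reals, so nothing breaks. In part (b) one should record explicitly that comass one yields $|\mu(w_1,\ldots,w_k)| \le |w_1 \w \cdots \w w_k|_{\ol{g}}$ for non-orthonormal vectors --- this is the step where the calibration hypothesis enters --- and one must keep the orientation of $dx^1 \w \cdots \w dx^k$ consistent with that of $M$ so that $\vol_{u^* \ol{g}} \ge 0$. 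Note that neither part uses any closedness of $\mu$, consistent with the paper's standing convention.
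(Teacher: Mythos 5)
Your proposal is correct and follows essentially the same route as the paper: both reduce each inequality to a pointwise statement between integrands, proving (a) via the arithmetic--geometric mean inequality applied to the eigenvalues of $g^{-1}u^*\ol{g}$ (the paper works with the matrix $A_{ij}=(u^*\ol{g})(\partial_i,\partial_j)$ in $g$-normal coordinates, you with the singular values $\sigma_i$ --- the same linear algebra), and (b) via the comass-one bound on $\mu$. Your identification of the equality cases, including reading off the conformal factor $\tfrac{1}{k}|du|^2_{g,\ol{g}}$ from the trace, matches the paper's argument exactly.
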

\begin{proof}
Take Riemannian normal coordinates at $p \in M$ with respect to $g$, so that $\{ \ddx{1}, \ldots, \ddx{k} \}$ are orthonormal at $p$. Define the $k \times k$ matrix $A$ by
$$ A_{ij} = (u^* \ol{g}) \Big( \ddx{i}, \ddx{j} \Big). $$
Then at $p$ we have
\begin{align*}
u^* \mu & = \mu\Big(\pa{u}{x^1}, \cdots, \pa{u}{x^k}\Big) \,\dx{1} \w \cdots \w \dx{k}, \\
\vol_{u^* \ol{g}} & = \sqrt{ \det A } \, \dx{1} \w \cdots \w \dx{k}, \\
\frac{1}{(\sqrt{k})^k}|du|^k_{g, \ol{g}} \, \vol_g & = \frac{1}{(\sqrt{k})^k} (\tr A)^{\frac{k}{2}} \, dx^1 \w \cdots dx^k.
\end{align*}
Since $\mu$ has comass one, we have
$$ \mu \Big( \pa{u}{x^1}, \cdots, \pa{u}{x^k} \Big) \leq \Big| \pa{u}{x^1} \w \cdots \w \pa{u}{x^k} \Big|_{\ol{g}} = \sqrt{\det A}. $$
This gives 
\begin{equation} \label{eq:v-mu-pointwise}
u^* \mu \leq \vol_{u^*\ol{g}},
\end{equation}
which integrates to the inequality in (b). The equality condition is obvious.

For (a), by the arithmetic-geometric mean inequality, we have $ (\det A)^{\frac{1}{k}} \leq \frac{1}{k} \tr A$, and thus
\begin{equation}\label{eq:am-gm}
\sqrt{\det A} \leq \frac{1}{(\sqrt{k})^k} (\tr A)^{\frac{k}{2}}.
\end{equation}
This gives
\begin{equation} \label{eq:e-v-pointwise}
\vol_{u^* \ol{g}} \leq \frac{1}{(\sqrt{k})^k} |du|^k_{g, \ol{g}} \, \vol_g, 
\end{equation}
which integrates to $\cV(u, \ol{g}) \leq \cE (u, g, \ol{g})$. Finally, $\cV(u, \ol{g}) = \cE (u, g, \ol{g})$ exactly when equality holds in~\eqref{eq:e-v-pointwise} everywhere on $M$, or equivalently when equality holds in~\eqref{eq:am-gm} for all $p \in M$. The latter happens if and only if all eigenvalues of the nonnegative definite matrix $A_{ij}$ coincide, in which case the latter is a nonnegative multiple of $\delta_{ij} = g_{ij}$ at $p$. In summary, $\cV(u, \ol{g}) = \cE (u, g, \ol{g})$ if and only if
$$ u^* \ol{g} = \lambda^2 g \text{ for some smooth function $\lambda^2$}. $$
Taking the trace of both sides with respect to $g$ and using~\eqref{eq:norm-from-trace}, we deduce that necessarily $\lambda^2 = \frac{1}{k} |du|^2_{g, \ol{g}}$. This completes the characterization of the equality case in (a).
\end{proof}

Regarding the equality condition in (a) above, we say that
\begin{equation} \label{eq:weakly-conformal}
u \colon (M, g) \to (\ol{M}, \ol{g}) \quad \begin{cases} \, \, \text{is \emph{weakly conformal} if} \, \, u^* \ol{g} = \frac{1}{k} |du|^2_{g, \ol{g}} \, g, \\
\, \, \text{is \emph{conformal} if it is weakly conformal and $|du|^2_{g, \ol{g}} > 0$ on $M$}. \end{cases}
\end{equation}
Note that a conformal map is necessarily an immersion.

As for the equality condition in part (b), noting that both $u^* \mu$ and $\vol_{u^* \ol{g}}$ vanish at the points where $u$ is not an immersion, we see that the condition is equivalent to the fact that the immersed submanifold $u(M')$ is calibrated by $\mu$, where $M' = \{ p \in M \mid du_p \text{ is injective} \}$.

We are now ready to define Smith maps. A map $u \colon (M, g) \to (\ol{M}, \ol{g}, \mu)$ is called a \emph{Smith map} if equality holds in both (a) and (b) of Lemma~\ref{lemma:compare-cE-cV-mu}, or equivalently if the following two conditions hold:
\begin{equation} \label{eq:Smith-map}
u^* \ol{g} = \frac{1}{k} |du|^2_{g, \ol{g}} \, g, \qquad \text{and} \qquad u^* \mu = \frac{1}{(\sqrt{k})^k} |du|^k_{g, \ol{g}} \, \vol_g.
\end{equation}

\begin{rmk} \label{rmk:Smith-maps}
The second condition in~\eqref{eq:Smith-map} is equivalent to $u^* \mu = \vol_{u^* \ol{g}}$ under weak conformality, since with $\lambda^2 = \frac{1}{k} |du|^2_{g, \ol{g}}$ we have
\begin{equation} \label{eq:conformal-map-volume}
\vol_{u^* \ol{g}} = \vol_{\lambda^2 g} = \lambda^k \vol_{g} = \frac{1}{(\sqrt{k})^k} |du|^k_{g, \ol{g}} \, \vol_g.
\end{equation}
We showed in the proof of Lemma~\ref{lemma:compare-cE-cV-mu} that the first condition in~\eqref{eq:Smith-map} is equivalent to $\vol_{u^* \ol{g}} = \frac{1}{(\sqrt{k})^k} |du|^{k}_{g, \ol{g}} \, \vol_g$, which in view of the inequalities~\eqref{eq:v-mu-pointwise} and~\eqref{eq:e-v-pointwise} is implied by the second condition in~\eqref{eq:Smith-map}. Thus the second condition in~\eqref{eq:Smith-map} alone defines Smith maps. A similar observation was made in~\cite[Example 1.3]{HPP}.
\end{rmk}

Next, suppose that $u \colon (M^k, g) \to (\ol{M}^n, \ol{g})$ is weakly conformal. Then it follows easily from the definition that $u$ is a Smith map if and only if $u^* \mu = \vol_{u^* \ol{g}}$. Hence if $du$ is never zero on $M$, then the image $u(M)$ of a Smith map is a submanifold of $\ol{M}$ that is \emph{calibrated} by $\mu$. (See~\cite[Section 3.1]{CKM}.)

In view of the results in Section~\ref{sec:calibrated}, it is natural to ask whether we can characterize Smith maps by studying $\cE(u, g, \ol{g})$ as a functional of $\ol{g}$. We are able to do this assuming in addition that $u$ is injective and conformal. See Theorems~\ref{thm:Smith-A} and~\ref{thm:Smith-B} below. For the sake of comparison, we mention that we can also consider $\cE (u, g, \ol{g})$ as a functional of either of its two other arguments. The following result is well-known, but we include a proof for completeness.
\begin{prop} \label{prop:energy-vary-domain}
Let $k \geq 2$. Consider $\cE (u, g, \ol{g})$ as a functional of the domain metric $g$. Then $g$ is a critical point for $\cE$ if and only if $u$ is weakly conformal.
\end{prop}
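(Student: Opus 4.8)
The plan is to regard $u$ and $\ol g$ as fixed and to view $\cE$ purely as a functional of the domain metric $g$, noting that the integrand depends on $g$ only through the scalar $f := |du|^2_{g, \ol g} = \tr_g(u^* \ol g)$ and the volume form $\vol_g$. Writing $b := u^* \ol g$, which is a \emph{nonnegative} symmetric $2$-tensor on $M$ (since $b(X,X) = \ol g(du(X), du(X)) \geq 0$), we have $f = g^{ij} b_{ij}$, so that $\cE(u, g, \ol g) = \frac{1}{k^{k/2}} \int_M f^{k/2} \vol_g$. The hypothesis $k \geq 2$ guarantees that the integrand is at least $C^1$ in $g$ (the factor $f^{k/2}$ remains differentiable in $g$ even across the zero set of $f$), so the first variation can be computed by differentiating under the integral sign and then invoking the fundamental lemma of the calculus of variations.

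First I would take a variation $g_t = g + t h$ with $h$ an arbitrary symmetric $2$-tensor and apply the standard formulas $\rest{\ddt}{t=0} g_t^{ij} = - g^{ip} g^{jq} h_{pq}$ and $\rest{\ddt}{t=0} \vol_{g_t} = \frac12 (g^{pq} h_{pq}) \vol_g$. Setting $b^{pq} = g^{ip} g^{jq} b_{ij}$, this gives $\rest{\ddt}{t=0} f = - b^{pq} h_{pq}$, and a short computation yields
\begin{equation*}
\rest{\ddt}{t=0} \cE(u, g_t, \ol g) = \frac{1}{2 k^{k/2}} \int_M f^{k/2 - 1} \big( f \, g^{pq} - k \, b^{pq} \big) h_{pq} \, \vol_g.
\end{equation*}
Since $h$ ranges over all symmetric $2$-tensors, the fundamental lemma shows that $g$ is critical if and only if the continuous symmetric tensor field $T^{pq} := f^{k/2 - 1} ( f g^{pq} - k b^{pq})$ vanishes identically on $M$.

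It then remains to identify the condition $T \equiv 0$ with weak conformality \eqref{eq:weakly-conformal}, and this is the step requiring the most care, because the scalar prefactor $f^{k/2-1}$ degenerates precisely where $du = 0$. At points where $f > 0$ the prefactor is positive, so $T = 0$ forces $b^{pq} = \frac{f}{k} g^{pq}$, i.e.\ $u^* \ol g = \frac1k |du|^2_{g, \ol g}\, g$. At points where $f = 0$, the nonnegativity of $b$ together with $\tr_g b = f = 0$ forces $b = 0$ there, so the relation $u^* \ol g = \frac1k |du|^2_{g, \ol g}\, g$ holds trivially (both sides vanish); in particular $T$ automatically vanishes at such points. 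Hence $T \equiv 0$ is equivalent to $u^* \ol g = \frac1k |du|^2_{g,\ol g}\, g$ holding everywhere, which is exactly the definition of weak conformality. The converse direction is immediate: if $u$ is weakly conformal then $f g^{pq} - k b^{pq} = 0$ identically, so $T \equiv 0$ and $g$ is critical. The main obstacle, then, is not the variational computation itself but the careful treatment of the zero locus of $du$, where the nonnegativity of $u^* \ol g$ is exactly what rescues the argument; note also that the case $k = 2$ (where $f^{k/2-1} \equiv 1$) needs no such care, consistent with the classical theory of harmonic and conformal maps.
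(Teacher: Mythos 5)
Your proof is correct and follows essentially the same route as the paper: the identical first-variation computation in $g$ (using $\rest{\ddt}{t=0} g_t^{ij} = -h^{ij}$ and $\rest{\ddt}{t=0}\vol_{g_t} = \tfrac12(\tr_g h)\vol_g$) leading to the same Euler--Lagrange tensor $|du|^{k-2}_{g,\ol g}\bigl(|du|^2_{g,\ol g}\,g - k\,u^*\ol g\bigr)$. The only difference is that where the paper dismisses the final equivalence with weak conformality as ``clear,'' you supply the one genuinely needed observation --- that at points where $du=0$ the nonnegativity of $u^*\ol g$ together with $\tr_g(u^*\ol g)=0$ forces $u^*\ol g=0$, so the degenerating prefactor $|du|^{k-2}_{g,\ol g}$ causes no loss of information --- which is a worthwhile detail but not a different argument.
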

\begin{proof}
Let $g_t$ be a family of metrics on $M$ with $g_{t=0} = g$, and let $h = \rest{\ddt}{t=0} g_t$. Then we have $\rest{\ddt}{t=0} g_t^{ij} = - h^{ij}$ and $\rest{\ddt}{t=0} \vol_{g_t} = \frac{1}{2} (\tr_g h) \vol_g$. Using these facts and equation~\eqref{eq:norm-from-trace}, we compute
\begin{align*}
\rest{\ddt}{t=0} \cE (u, g_t, \ol{g}) & = \frac{1}{(\sqrt{k})^k} \int_M \rest{\ddt}{t=0} \Big( (|du|^2_{g_t, \ol{g}})^{\frac{k}{2}} \vol_{g_t} \Big) \\
& = \frac{1}{(\sqrt{k})^k} \int_M \Big( \frac{k}{2} (|du|^2_{g, \ol{g}})^{\frac{k}{2} - 1} (- \langle h, u^* \ol{g} \rangle_g) \vol_g + (|du|^2_{g, \ol{g}})^{\frac{k}{2}} \frac{1}{2} (\tr_g h) \vol_g \Big) \\
& = \frac{1}{2 (\sqrt{k})^k} \int_M \langle h, - k |du|^{k-2}_{g, \ol{g}} u^* \ol{g} + |du|^k_{g, \ol{g}} g \rangle_g \vol_g.
\end{align*}
The above vanishes for all $h$ if and only if $|du|^{k-2}_{g, \ol{g}} ( - k u^* \ol{g} + |du|^2_{g, \ol{g}} g) = 0$, which is clearly equivalent to the weakly conformal condition of~\eqref{eq:weakly-conformal}.
\end{proof}

Next consider $\cE (u, g, \ol{g})$ as a functional of the map $u \colon M \to \ol{M}$. It is well-known that the critical points in this case are the \emph{$k$-harmonic maps}, which are the maps that satisfy the $k$-harmonic map equation
$$ \dive_g ( |du|^{k-2}_{g, \ol{g}} du ) = 0. $$
If $d \mu = 0$, any Smith map $u \colon (M^k, g) \to (\ol{M}^n, \ol{g}, \mu)$ is a $k$-harmonic map. (See~\cite[Section 3.5]{CKM}.) By this fact and Proposition~\ref{prop:energy-vary-domain}, we see that if $(u, g, \ol{g})$ is a Smith map, then $(u, g)$ is critical for $\cE(\cdot, \cdot, \ol{g})$. However, there are weakly conformal $k$-harmonic maps that are not Smith maps. The next two results, for which we do not assume that $d \mu = 0$, are analogues of Theorems A and B in the context of Smith maps. They essentially say (modulo the assumption that $u$ is injective and conformal) that a Smith map $(u, g, \ol{g})$ is \emph{characterized} by the condition that $\ol{g}$ is critical for $\cE(u, g, \cdot)$, with respect to the special class of ambient metric variations $\ol{g}_t$ considered in Section~\ref{sec:calibrated}. 

As preparation, we note that if $\ol{g}_t$ is a family of metrics on $\ol{M}$ with $\ol{g}_{t=0} = \ol{g}$ and $\ol{h} = \rest{\ddt}{t=0} \ol{g}_t$, then computing as we did in Proposition~\ref{prop:energy-vary-domain} yields that
\begin{equation} \label{eq:energy-vary-target}
\rest{\ddt}{t=0} \cE (u, g, \ol{g}_t) = \frac{k}{2 (\sqrt{k})^k} \int_M |du|^{k-2}_{g, \ol{g}} \langle u^* \ol{h}, g \rangle_g \vol_g.
\end{equation}

\begin{thm}[Theorem A for Smith maps] \label{thm:Smith-A}
Let $u \colon (M, g) \to (\ol{M}, \ol{g}, \mu)$ be an injective Smith map. Suppose that $|du|^2_{g, \ol{g}} > 0$ on $M$, so that $u$ is conformal. Then $\ol{g}$ is a critical point of the $k$-energy function $\cE (u, g, \ol{g})$, thought of as a functional of $\ol{g}$, with respect to the special class of ambient metric variations $\ol{g}_t$ considered in Section~\ref{sec:calibrated}. 
\end{thm}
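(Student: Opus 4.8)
The plan is to reduce the statement to the volume version of Theorem~A already proved in Section~\ref{sec:calibrated}, exploiting the conformal invariance of the $k$-energy in the domain metric. First I would record the geometric setup: since $u$ is injective and conformal, it is an injective immersion of the compact manifold $M$, hence an embedding, so its image $\hat M = u(M)$ is a compact embedded submanifold of $\ol M$ and $u \colon M \to \hat M$ is a diffeomorphism carrying $u^* \ol g$ to the induced metric $\rest{\ol g}{\hat M}$. Because $(u, g, \ol g)$ is a conformal Smith map, the image $\hat M$ is calibrated by $\mu$, as recalled in the discussion immediately preceding the theorem (via~\cite{CKM}). This is the fact that lets me pass from the map picture to the embedded-submanifold picture of Section~\ref{sec:calibrated}.

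Next I would use that $\cE$ depends only on the conformal class of the domain metric to replace $g$ by $\tilde g := u^* \ol g$. This is legitimate since, by weak conformality, $\tilde g = \tfrac{1}{k} |du|^2_{g, \ol g}\, g$ is a fixed positive rescaling of $g$, and the conformal invariance $\cE(u, g, \ol g_t) = \cE(u, \tilde g, \ol g_t)$ holds for every $t$ regardless of the target metric. With this choice $u \colon (M, \tilde g) \to (\ol M, \ol g)$ is an isometric immersion, so $|du|^2_{\tilde g, \ol g} = \tr_{\tilde g}(u^* \ol g) = \tr_{\tilde g}(\tilde g) = k$. Applying the first variation formula~\eqref{eq:energy-vary-target} with $\tilde g$ in place of $g$, the energy density factor $|du|^{k-2}_{\tilde g, \ol g} = k^{(k-2)/2}$ becomes constant and the prefactors collapse:
\begin{equation*}
\rest{\ddt}{t=0} \cE(u, \tilde g, \ol g_t) = \frac{k \cdot k^{(k-2)/2}}{2(\sqrt k)^k} \int_M \tr_{\tilde g}(u^* \ol h)\, \vol_{\tilde g} = \frac{1}{2} \int_M \tr_{\tilde g}(u^* \ol h)\, \vol_{\tilde g},
\end{equation*}
where $\ol h = \rest{\ddt}{t=0} \ol g_t$. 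Transporting this integral to $\hat M$ via the diffeomorphism $u$, the right-hand side is precisely the first variation $\rest{\ddt}{t=0} \cV(\ol g_t)$ of the volume of $\hat M$ furnished by Lemma~\ref{lemma:first-variation}.

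Finally, since $\hat M$ is a compact embedded submanifold calibrated by $\mu$, I would invoke the relevant volume version of Theorem~A from Section~\ref{sec:calibrated}---namely Theorem~\ref{thm:Um-A}, \ref{thm:assoc-A}, \ref{thm:coassoc-A}, or \ref{thm:Cayley-A} according to the calibration---applied to $\hat M$, to conclude that $\rest{\ddt}{t=0} \cV(\ol g_t) = 0$ for variations $\ol g_t$ in the special class, and hence that $\rest{\ddt}{t=0} \cE(u, g, \ol g_t) = 0$. The substantive point to verify is the bookkeeping that makes the powers of $k$ and the conformal factors cancel so exactly that the $k$-energy variation coincides with the volume variation of the image; the remaining care is to confirm that the special class of variations used here is literally the class $\cG$ of Section~\ref{sec:calibrated} applied to $\hat M$ (in the $\Spin{7}$ case including the submanifold-dependent condition~\eqref{eq:Cayley-condition} for $\hat M$), with orientation playing no essential role by Remark~\ref{rmk:up-to-orientation}.
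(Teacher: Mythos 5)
Your proposal is correct and takes essentially the same approach as the paper: both reduce $\rest{\ddt}{t=0}\cE(u,g,\ol{g}_t)$ to $\rest{\ddt}{t=0}\cV(u,\ol{g}_t)$ via the first variation formula~\eqref{eq:energy-vary-target} together with Lemma~\ref{lemma:first-variation}, note that the injective conformal map $u$ is an embedding with calibrated image, and then invoke the submanifold version of Theorem A. The only cosmetic difference is that you first replace $g$ by $\tilde g = u^*\ol{g}$ using conformal invariance of the $k$-energy so that $|du|^2_{\tilde g,\ol{g}} = k$ and the prefactors collapse to $\tfrac{1}{2}$, whereas the paper substitutes the conformality relation $u^*\ol{g} = \tfrac{1}{k}|du|^2_{g,\ol{g}}\,g$ directly into~\eqref{eq:energy-vary-target}; the resulting identity is the same.
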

\begin{proof}
Let $\ol{h} = \rest{\ddt}{t=0} \ol{g}_t$. Since $u$ is assumed conformal, by~\eqref{eq:conformal-map-volume}, and that fact that $|du|^2_{g, \ol{g}} > 0$, we have
$$ \frac{1}{(\sqrt{k})^k} |du|^{k-2}_{g, \ol{g}} \, \vol_g = \frac{1}{|du|^2_{g, \ol{g}}} \, \vol_{u^* \ol{g}}. $$
Moreover, from $u^* \ol{g} = \frac{1}{k} |du|^2_{g, \ol{g}} \, g$, we get $g^{-1} = \frac{1}{k} |du|^2_{g, \ol{g}} \, (u^* \ol{g})^{-1}$, and thus
$$ \langle u^* \ol{h}, g \rangle_g = \tr_g (u^* \ol{h}) = \frac{1}{k} |du|^2_{g, \ol{g}} \tr_{u^* \ol{g}} (u^* \ol{h}). $$
Substituting the above two equations into~\eqref{eq:energy-vary-target}, we obtain
$$ \rest{\ddt}{t=0} \cE (u, g, \ol{g}_t) = \frac{1}{2} \int_M \tr_{u^* \ol{g}} (u^* \ol{h}) \, \vol_{u^* \ol{g}}. $$
By Lemma~\ref{lemma:first-variation}, the above says
$$ \rest{\ddt}{t=0} \cE (u, g, \ol{g}_t) = \rest{\ddt}{t=0} \cV (u, \ol{g}_t). $$
Now since $u$ is injective and conformal, and since $M$ is compact, we see that $u$ is an embedding. Moreover, since $u$ is a Smith map, the image $u(M)$ is calibrated by $\mu$. The result now follows by Theorem A for calibrated submanifolds.
\end{proof}

\begin{thm}[Theorem B for Smith maps] \label{thm:Smith-B}
Let $u \colon (M, g) \to (\ol{M}, \ol{g}, \mu)$ be an injective conformal map. Suppose that $\ol{g}$ is a critical point of the $k$-energy function $\cE (u, g, \ol{g})$, thought of as a functional of $\ol{g}$, with respect to the special class of ambient metric variations $\ol{g}_t$ considered in Section~\ref{sec:calibrated}. Then $u$ is a Smith map.
\end{thm}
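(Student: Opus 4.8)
The plan is to reduce Theorem~\ref{thm:Smith-B} directly to Theorem B for calibrated submanifolds, by exploiting the key fact that the chain of identities in the proof of Theorem~\ref{thm:Smith-A} used \emph{only} the conformality of $u$, and not the full Smith condition. First I would observe that since $u$ is injective and conformal and $M$ is compact, $u$ is an embedding, so $u(M)$ is a compact embedded submanifold of $\ol{M}$, and the functional $\cV$ of Section~\ref{sec:calibrated} applied to $u(M)$ may be written as $\cV(u, \ol{g}_t) = \int_M \vol_{u^* \ol{g}_t}$ after pulling back by the diffeomorphism $u \colon M \to u(M)$.

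Next, repeating verbatim the computation in the proof of Theorem~\ref{thm:Smith-A} --- which invokes only $u^* \ol{g} = \tfrac{1}{k} |du|^2_{g, \ol{g}}\, g$ and $|du|^2_{g, \ol{g}} > 0$, both of which hold by the conformality hypothesis of Theorem~\ref{thm:Smith-B} --- I obtain, for any admissible ambient variation $\ol{g}_t$ with $\ol{h} = \rest{\ddt}{t=0} \ol{g}_t$, the identity
\begin{equation*}
\rest{\ddt}{t=0} \cE (u, g, \ol{g}_t) = \frac{1}{2} \int_M \tr_{u^* \ol{g}} (u^* \ol{h}) \, \vol_{u^* \ol{g}} = \rest{\ddt}{t=0} \cV(u, \ol{g}_t),
\end{equation*}
where the second equality is Lemma~\ref{lemma:first-variation} applied to the embedded submanifold $u(M)$. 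The point worth emphasizing is that this identity is available for a \emph{merely conformal} $u$, hence it may be used here even though we do not yet know that $u$ is Smith; this is precisely what makes the reduction possible.

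Consequently, the hypothesis that $\ol{g}$ is critical for $\cE(u, g, \cdot)$ with respect to the special class $\cG$ of variations of Section~\ref{sec:calibrated} translates immediately into the statement that $\ol{g}$ is critical for the volume functional $\cV$ of the embedded submanifold $u(M)$, with respect to the same class $\cG$. I then apply the appropriate version of Theorem B for calibrated submanifolds (Theorem~\ref{thm:Um-B},~\ref{thm:assoc-B},~\ref{thm:coassoc-B}, or~\ref{thm:Cayley-B}, according to the structure carrying $\mu$) to conclude that $u(M)$ is calibrated by $\mu$; equivalently, $u^* \mu = \vol_{u^* \ol{g}}$ on $M$. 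Finally I would invoke Remark~\ref{rmk:Smith-maps}: under the conformality already assumed, the identity $u^* \mu = \vol_{u^* \ol{g}}$ is exactly the second condition in~\eqref{eq:Smith-map}, which alone characterizes Smith maps, so $u$ is a Smith map.

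Since the essential content is this transfer of criticality, there is no serious computational obstacle; the only points requiring care are bookkeeping ones. One must check that the particular test variation employed inside Theorem B (see Remark~\ref{rmk:B-stronger}) indeed belongs to the class $\cG$ for which criticality of $\cE$ is assumed, and --- in the Cayley case --- that the accompanying submanifold-dependent condition~\eqref{eq:Cayley-condition} is included, which it is by the phrasing of Theorem~\ref{thm:Cayley-B}. I expect the subtlest conceptual step to be the recognition, highlighted above, that the variation formula from Theorem~\ref{thm:Smith-A} is genuinely a consequence of conformality alone and therefore usable before the Smith property has been established.
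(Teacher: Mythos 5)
Your proposal is correct and follows essentially the same route as the paper: observe that $u$ is an embedding, note that the identity $\rest{\ddt}{t=0} \cE (u, g, \ol{g}_t) = \rest{\ddt}{t=0} \cV (u, \ol{g}_t)$ from the proof of Theorem~\ref{thm:Smith-A} uses only conformality, and then invoke Theorem B for calibrated submanifolds together with Remark~\ref{rmk:Smith-maps}. Your added remarks on why the variation formula is available before the Smith property is known, and on the admissibility of the test variations, are accurate elaborations of steps the paper leaves implicit.
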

\begin{proof}
Again we have that $u$ is an embedding. Moreover, as in the previous proof we have by the conformality of $u$ that
$$ \rest{\ddt}{t=0} \cE (u, g, \ol{g}_t) = \rest{\ddt}{t=0} \cV (u, \ol{g}_t). $$
The left hand side vanishes by assumption, and the result now follows by Theorem B for calibrated submanifolds.
\end{proof}

\subsection{Future questions} \label{sec:future}

There are several directions in which one could continue these investigations:

\begin{itemize}
\item One could try to relax the assumption that $M$ is \emph{embedded} in $\ol{M}$ to an immersion $\iota \colon M \to \ol{M}$. We needed $\iota$ to be an embedding to ensure that the function $F$ in Definition~\ref{defn:F} was well-defined. If this succeeds one could then further try to generalize Theorems~\ref{thm:Smith-A} and~\ref{thm:Smith-B} to apply to maps that are only weakly conformal, as discussed in Section~\ref{sec:Smith}.
\item An obvious question is to consider the case of \emph{special Lagrangian submanifolds} in manifolds $\ol{M}^{2m}$ with $\SU{m}$-structure. This case seems to be more complicated, for the following reasons. First, an $\SU{m}$-structure is determined by a pair of differential forms: a real $(1,1)$-form $\omega$ and a complex $(m,0)$ form $\Omega$, related by $\omega^m = c_m \Omega \w \ol{\Omega}$, where $c_m$ is some universal constant depending on the ``complex'' dimension $m$. The real $m$-forms $\mu_{\theta} = \real (e^{i \theta} \Omega)$ are calibrations for all $\theta \in [0, 2 \pi)$. Should the right thing to do be to vary $\mu_{\theta}$ in the direction of exact forms, or to vary $\Omega$ in the direction of exact forms? Either way, we are forced to also vary $\omega$ in order to maintain an $\SU{m}$-structure at all times. It is not clear exactly how to do this in a natural way.
\item One might try to look for a general argument for Theorem B that works for any calibration form. However, the fact that the Cayley case is so different from the others hints that any such general argument may be somewhat subtle. In particular it appears that, even if one assumes that the calibration form $\mu$ is closed, varying $\mu$ to stay in a fixed cohomology class does not seem to be natural, at least in the Cayley case.
\item It is natural to consider the \emph{second variation} of the functional $\ol{g} \mapsto \cV(\ol{g})$ with respect to variations in our subclass $\cG$ that makes Theorems A and B true. In particular, are the critical metrics $\ol{g}$ \emph{stable} critical points? Note that in~\cite{AZ1,AZ2}, Arezzo--Sun did compute the second variation in the almost K\"ahler case, and showed that if the functional $\cV$ had a \emph{stable point} $\ol{g}$ (that is, if $\frac{d^2}{dt^2} \big|_{t=0} \cV (\ol{g}_t) \geq 0$, even if $\ol{g}$ is not a critical point), then the embedded submanifold is calibrated. This is interesting, but does not answer the question of whether all critical points are stable.
\item Finally, one could consider some kind of \emph{gradient flow} of $\cV$, to try to flow the pair $(\ol{g}, \mu)$ on $\ol{M}$ to a limiting pair $(\ol{g}_{\infty}, \mu_{\infty})$ where $\mu_{\infty}$ is a calibration with respect to $\ol{g}_{\infty}$ and $M$ is calibrated by $\mu_{\infty}$. However, the objects flowing would be on $\ol{M}$, and the gradient flow would depend only on their restrictions to the submanifold $M$, so it is not clear how to deal with this problem.
\end{itemize}

\addcontentsline{toc}{section}{References}

\end{document}